\newcommand{\Id}{\ensuremath{\operatorname{Id}}\,}
\newcommand{\p}{\partial}
 \newcommand{\Z}{{\mathbb Z}}
\newcommand{\M}{{\mathbb M}}
\newcommand{\TT}{{\mathbb T}}
\newcommand{\RX}{\ensuremath{\left]-\infty,+\infty\right]}}
\newcommand{\pinf}{\ensuremath{{+\infty}}}
\newcommand{\dom}{\ensuremath{\operatorname{dom}}}
\newcommand{\minimize}[2]{\ensuremath{\underset{\substack{{#1}}}%
{\mathrm{minimize}}\;\;#2 }}
\newcommand{\scal}[2]{{{({#1})\cdot({#2})}}}
\newcommand{\Menge}[2]{\Big\{{#1}~ ;~{#2}\Big\}} 
\newcommand{\menge}[2]{\left\{{#1}~ ; ~{#2}\right\}} 
\newcommand{\prox}{\ensuremath{\operatorname{prox}}}
\newcommand{\T}{{\mathbb{T}}}
\newcommand{\Argmax}[2]{\ensuremath{\underset{\substack{{#1}}}%
{\mathrm{Argmax}}\;\;#2 }}
\newenvironment{proof}[1][Proof]{\textbf{#1.} }{\ \rule{0.5em}{0.5em}}
\newtheorem{remark}{\textbf{Remark}}[section]
\newtheorem{lemma}{\textbf{Lemma}}[section]
\newtheorem{theorem}{\textbf{Theorem}}[section]
\newtheorem{corollary}{\textbf{Corollary}}[section]
\newtheorem{proposition}{\textbf{Proposition}}[section]
\numberwithin{equation}{section}
\title{Proximal methods for stationary Mean Field Games  with local couplings\author{
L. M. Brice\~no-Arias \thanks{Universidad T\'ecnica Federico Santa
Mar\'ia, Departamento de Matem\'atica, Av. Vicu\~na Mackenna 3939,
San Joaqu\'in, Santiago, Chile (luis.briceno@usm.cl).} \and D. Kalise \thanks{Johann Radon Institute for Computational and Applied Mathematics, Austrian Academy of Sciences, Altenbergerstra\ss e 69, A-4040 Linz, Austria (dante.kalise@oeaw.ac.at).} \and F. J.
Silva \thanks{Institut de recherche XLIM-DMI, UMR-CNRS 7252 Facult\'e
des sciences et techniques 
Universit\'e de Limoges, 87060 Limoges, France
(francisco.silva@unilim.fr).}}}
\def\dd{{\rm d}}
\def\weight(#1,#2){c_{#1,#2}}
\def\B{\mathcal{B}}
\def\D{\mathcal{D}}
\def\F{\mathcal{F}}
\def\M{\mathcal{M}}
\def\T{\mathcal{T}}
\def\W{\mathcal{W}}
\def\Y{\mathcal{Y}}
\def\Z{\mathcal{Z}}
\def\Om{{\Omega}}
\def\dom{\mathop{{\rm dom}}}
\def\half{\mbox{$\frac{1}{2}$}}
\def\1B{{\bf  1}}
\newcommand{\NN}{\mathbb{N}}
\newcommand{\RR}{\mathbb{R}}
\newcommand\be{\begin{equation}}
\newcommand\ee{\end{equation}}
\newcommand\ba{\begin{array}}
\newcommand\ea{\end{array}}
\newcommand{\bean}{\begin{eqnarray*}}
\newcommand{\eean}{\end{eqnarray*}}
\begin{document}
\maketitle
\begin{abstract}
We address  the numerical approximation of Mean Field Games  with 
local couplings. For power-like Hamiltonians, we consider   both the 
stationary system introduced in \cite{LasryLions06i,LasryLions07} and 
also a similar system involving density constraints in order to model 
hard congestion effects \cite{filippo,Mészáros2015}. For finite 
difference discretizations of the Mean Field Game system as in 
\cite{AchdouCapuzzo10}, we follow a variational approach. We prove 
that the aforementioned schemes can be obtained as the optimality 
system of suitably defined optimization problems. In order to prove 
the existence of solutions of the scheme with a variational argument, 
the monotonicity of the coupling term is not used, which allow us to 
recover general existence results proved in \cite{AchdouCapuzzo10}. 
Next, assuming next that the coupling term is monotone, the 
variational problem is cast as a convex optimization problem for 
which we study and compare  several proximal type methods. These 
algorithms have several interesting features, such as global 
convergence and stability with respect to the viscosity parameter, 
which can eventually be zero. We assess the performance of the 
methods via numerical experiments. 
\end{abstract}
\section{Introduction} 
Mean Field Games (MFG) have been recently introduced by J.-M. Lasry 
and P.-L. Lions \cite{LasryLions06i,LasryLions06ii,LasryLions07} and 
by Huang, Caines, and Malham\'e \cite{MR2352434} in order to model 
the behavior of  some differential games when  the number of players 
tends to infinity. For finite-horizon games, and under suitable 
assumptions such as the absence of a common noise affecting 
simultaneously all agents, the description of the limiting 
behaviour collapses into  two coupled deterministic partial 
differential equations (PDEs). The first one is a 
Hamilton-Jacobi-Bellman (HJB) equation with a terminal condition, 
characterizing the value function $v$ of an optimal control problem 
solved by {\it typical small player} and whose cost function depends 
on the distribution $m$ of the other players at each time. The second 
one is a Fokker-Planck (FP) equation, describing, at the Nash 
equilibrium,  the evolution of the initial distribution $m_0$ of the  
agents. In the ergodic case,  the resulting system is   stationary   
and  its solution is the limit of  a rescaled solution of a 
finite-horizon MFG system  when the horizon tends to infinity (see 
\cite{Cardaliagueteetal-2013a,Cardaliagueteetal-2013,Cardaliaguet-2013}
 and also \cite{gomes10} for similar results in the context of 
discrete MFG).  

In order to introduce the system we study, let $\mathbb{T}^n$ be the $n$-dimensional torus and $f: \mathbb{T}^n \times [0,+\infty[ \to \RR$ be a continuous function. Given $\nu\geq 0$ and a   function $H: \mathbb{T}^n\times \RR^{n} \mapsto \RR$, such that for all $x\in  \mathbb{T}^n$ the function $ H(x,\cdot): p\mapsto H(x,p)$ is convex and differentiable, we  consider the following stationary MFG problem: 
find two functions $u$, $m$ and $\lambda \in \RR$ 
such that 
\be\label{mfgestacionario}\ba{rcl} -\nu \Delta u + H(x,\nabla u)  - \lambda &=& f(x,m(x)) \hspace{0.2cm} \mbox{in $\TT^{n}$}, \\[4pt]
- \nu \Delta m -\mbox{div}\left(\partial_{p} H(x,\nabla u)m \right)&=&0  \hspace{0.2cm} 
\mbox{in $\TT^{n}$},\\[5pt]
m\geq 0, \; \; \; \int_{\TT^{n}} m(x) \dd x =1, 
& \; & \; \int_{\TT^{n}} u(x) \dd x =0.
\ea
\ee
When $\nu>0$, well-posedness of system \eqref{mfgestacionario} has been studied in several articles, starting with the works by  J.-M. Lasry and P.-L. Lions \cite{LasryLions06i,LasryLions07}, followed by \cite{MR3160525,MR2928381,MR3333058,MR3415027,gomestal15,pimentel2015regularity} in the case of smooth solutions and \cite{MR3333058,BarFel15,ferreiragomes15,doi:10.1080/03605302.2016.1192647} in the case of weak solutions. 

Let us point out that in terms of the underlying game, system 
\eqref{mfgestacionario} involves {\it local couplings} because the 
right hand side  of \eqref{mfgestacionario} depends on the 
distribution  $m$ through its pointwise value  (see 
\cite{LasryLions07}).  As explained in \cite{LasryLions07}, in this 
case  system \eqref{mfgestacionario} is related to a single optimal 
control problem. Indeed, defining   $b:\RR\times \RR^{n} \mapsto \RR 
\cup \{+\infty\}$ and $F: \TT^{n}\times \RR \mapsto  \RR \cup 
\{+\infty\}$ as \small
\begin{equation}
\label{e:Lq}
b(x,m,w):= \left\{
\begin{array}{ll}
mH^{\ast}\left(x,-\frac{w}{m}\right) & \mbox{if } m>0,\\
0, & \mbox{if } (m,w)=(0,0),\\
+\infty, & \mbox{otherwise}
\end{array}
\right. \hspace{0.1cm}  \; \; F(x,m):= 
\left\{\begin{array}{ll}  \int_{0}^{m} f(x,m') \dd m', & \mbox{if } m \geq 
0, \\[4pt]
+\infty, & \mbox{otherwise,}\end{array}\right.
\end{equation} \normalsize
system  \eqref{mfgestacionario} can be obtained, at least formally, as the optimality system associated to any solution $(m,w)$ of
$$\ba{l}\inf_{(m,w)} \int_{\TT^{n}} \left[b(m(x), w(x)) + F(x,m(x)) 
\right] \dd x,\\[8pt]
\mbox{subject to } \hspace{0.3cm} \ba{rcl} -\nu \Delta m + \mbox{div}(w) &=& 0  
\hspace{0.2cm} \mbox{in $\TT^{n}$},\\[4pt]
								\int_{\TT^{n}} m(x) \dd x&=&1. \ea
\ea \eqno(P)$$
The function $u$  in \eqref{mfgestacionario} corresponds to a Lagrange multiplier associated to the PDE constraint, $\lambda$ is a Lagrange multiplier associated to the integral constraint, and $w$ is given by  $-\partial_{p} H(x,\nabla u)m$.
 Note that the definitions of   $b$ and $F$  involve, implicitly, the non-negativity  of the variable $m$.

In the presence of  {\it hard congestion} effects for the 
agents, we consider  upper bound constraints for the 
density $m$ (see \cite{MauRouSan1,MauRouSan2} for the analysis in the 
context of crowd motion and \cite{filippo} for a proposal in the context of MFG), which we include in 
the following optimization problem  (see \cite{Mészáros2015} for a detailed study)
$$\ba{l}\inf_{(m,w)} \int_{\TT^{n}} \left[b(m(x), w(x)) + F(x,m(x)) 
\right] \dd x,\\[8pt]
\mbox{subject to } \hspace{0.3cm} \ba{rcl} -\nu \Delta m + \mbox{div}(w) &=& 0  
\hspace{0.2cm} \mbox{in $\TT^{n}$},\\[4pt]
								\int_{\TT^{n}} m(x) \dd x&=&1, \; \; \; \; m(x)\leq d (x) \; \; \mbox{a.e. in $\TT^{n}$},\ea
\ea \eqno(P^{d})$$ 
where  $d\in C(\TT^{n})$  satisfies $d(x)>0$ for all $x\in \TT^{n}$ and $\int_{\TT^{n}} d(x) \dd x > 1$.  It is assumed that for all $x\in \TT^n$
\be\label{hstarpowerq} H^{\ast}(x, v)= \frac{1}{q}|v|^{q} \hspace{0.3cm} \mbox{for some $q>1$  and so  } H(x, p)= \frac{1}{q'}|p|^{q'}, \; \; \mbox{where } \; \frac{1}{q}+\frac{1}{q'}=1.\ee
The analysis in \cite{Mészáros2015} is done in the case of a bounded domain $\Om$, with Neumann boundary conditions for the PDE constraint and  $d\equiv 1$.  However, it is easy to check that the results in \cite{Mészáros2015}  can be adapted to our case with minor modifications. If $q>n$, it is shown that there exists at least one solution $(m,w)\in W^{1,q}(\TT^{n})\times L^{q}(\TT^{n})$ to  $(P^{d})$ and there exists $(u,\lambda,p) \in  W^{1,s}(\TT^{n}) \times \RR  \times \M_{+}(\TT^{n})$, where $s \in ]1, n/(n-1)[$ and $\M_+(\TT^{n})$ denotes the set of non-negative Radon measures on $\TT^{n}$,  satisfying
\be\label{mfgestacionarioconrestcaja}\ba{c} -\nu \Delta u + \frac{1}{q'}| 
\nabla u|^{q'} - p-\lambda  \leq  f(x,m(x)) \hspace{0.2cm} \mbox{in $\TT^{n}$}, 
\\[4pt]
		 - \nu \Delta m - \mbox{div}\left( |\nabla 
		 u|^{\frac{2-q}{q-1}}\nabla u\,m\right)=0  \hspace{0.2cm} 
		 \mbox{in $\TT^{n}$},\\[4pt]
						    m\geq 0, \; \; \; \int_{\TT^{n}} m(x) \dd x =1, 
						   \; \; \;  \int_{\TT^{n}} u(x) \dd x =0,\\[6pt]
						     \mbox{supp}(p)\subseteq \{m=1\},
						     \ea
\ee
with the convention
$$ |\nabla u|^{\frac{2-q}{q-1}}\nabla u=0 \hspace{0.4cm} 
\mbox{if } \; q>2 \hspace{0.4cm} \mbox{and } \nabla u=0.$$ 
The first inequality in \eqref{mfgestacionarioconrestcaja} becomes an equality on the set $\{x \in \TT^n \; ; \; m(x)>0\}$. When $1<q\leq n$, an approximation argument shows the existence of solutions of a weak form of \eqref{mfgestacionarioconrestcaja}.

The aim of this work is to consider the numerical approximation 
of solutions of 
\eqref{mfgestacionario}-\eqref{mfgestacionarioconrestcaja} by 
means of their variational formulations.  For the sake of 
simplicity, we restrict our analysis to the 2-dimensional case, 
i.e., we take $n=2$ and consider Hamiltonians of the form 
\eqref{hstarpowerq}. However,    all the results in this work 
admit natural extensions  in  general dimensions and most of 
them are valid for more general Hamiltonians.  We do not 
consider here the case of {\it non-local couplings}, not 
necessarily variational,  and we refer the reader to 
\cite{AchdouCapuzzo10,AchdouCapuzzoCamilli12,camsilfirst11,CS12,CS15,FACK16}
 for some numerical methods for this case. Inspired by 
\cite{AchdouCapuzzoCamilli12}, in the context of MFG systems 
related  to planning problems, we follow the {\it first 
discretize and then optimize strategy} by considering suitable 
finite-difference discretizations of the PDE constraint and the 
cost functionals appearing in $(P)$ and $(P^d)$.  Given an 
uniform grid of size $h$ on the torus $\mathbb{T}^2$, we call 
$(P_h)$ and $(P_h^d)$ the chosen discrete versions of $(P)$ and 
$(P^d)$, respectively.  We prove the existence of at least one 
solution $(m^h,w^h)$ of the discrete  variational problem 
$(P_{h})$, as well as the existence of Lagrange multipliers 
$(u^h,\lambda^h)$ associated to $(m^h,w^h)$. Similar results are 
obtained for $(P_h^d)$, where an additional Lagrange multiplier 
$p^h$ appears because of the supplementary density constraint. 
We state the general optimality conditions for both problems in 
Theorem \ref{condicionesoptimalidad}. If we consider problem 
$(P_h)$ and we suppose that  $\nu>0$, we obtain in Corollary 
\ref{masaspositivas} that $(m^h,u^h,\lambda^h)$ solves the 
finite-difference scheme  proposed by Achdou et al. in  
\cite{AchdouCapuzzo10,AchdouCapuzzoCamilli10}.  We point out 
that, contrary to \cite{AchdouCapuzzoCamilli12}, our analysis 
does not use convex duality theory and thus allows, at this 
stage, to consider non-convex functions $F$ in order to obtain 
the existence of solutions to the discrete systems, recovering   
some of the results in \cite{AchdouCapuzzo10}, without using 
fixed point theorems.  When $\nu=0$,  we obtain in Corollary 
\ref{casomasagennuigual0} the existence of a solution of a 
natural discretization of the stationary first order MFG system 
proposed in  \cite[Definition 4.1]{Cardagraber15}. Analogous 
existence results, based on the study of problem $(P^d_h)$, are 
proved for   natural discretizations of   system 
\eqref{mfgestacionarioconrestcaja}.

If  $\nu>0$, $H$ is of the form \eqref{hstarpowerq}, $f(x,\cdot)$ is 
increasing,  and we suppose that   \eqref{mfgestacionario} admits  
regular solutions, then, as $h\downarrow 0$, the sequence of 
solutions  $(m^h,u^h,\lambda^h)$  of the finite-difference scheme 
proposed in \cite{AchdouCapuzzo10}  converges to the unique solution 
of \eqref{mfgestacionario} (see  \cite[Theorem 
5.3]{AchdouCapuzzoCamilli12}). One can then use Newton's method to 
compute $(m^h,u^h,\lambda^h)$ (see \cite{AchdouCapuzzo10}, where the 
stationary solution is approximated with the help of  time-dependent 
problems, and \cite{CaCa16}, where a direct approach is used) and so 
the computation is efficient  if the initial guess for   Newton's 
algorithm  is near to the solution. On the other hand, as pointed out 
in \cite[Section 5.5]{AchdouCapuzzoCamilli10}, \cite[Section 
2.2]{MR2928376} and \cite[Section 9]{CaCa16} the performance of 
Newton's method heavily depends on the values of $\nu$: for small 
values, or in the limit case when $\nu=0$, the convergence is much 
slower and, numerically and without suitable modifications, cannot be 
guaranteed because  the iterates for the computation of $m^h$ can 
become negative. 

If $f$ is increasing with respect to its second argument, then problems $(P)$ and $(P^d)$ are convex, a property that is preserved by the discrete versions $(P_h)$ and $(P_h^d)$. Therefore, it is natural to consider first order convex optimization algorithms (see \cite{BausCombettes11} for a rather complete account of these techniques) to overcome the difficulties explained in the previous paragraph. In particular, these algorithms are {\it global} because they converge for any initial condition.  This type of strategy has been already pursued in the articles \cite{benamoucarlier15,BenCarSan16,Andreev16} where the Alternating Direction Method of Multipliers (ADMM), introduced in \cite{MR0388811,GABAY197617,Gabay1983299}, is applied to solve some MFG systems.  The ADMM method is a variation of the well-known Augmented Lagrangian method, introduced in  \cite{MR0262903,MR0271809,MR0272403}, and has been successfully applied in the context of optimal transportation problems (see e.g. \cite{MR1738163,MR2516195}). This method shows good performance in the case when $\nu=0$ (see \cite{benamoucarlier15,BenCarSan16})  and  has been recently tested when $\nu >0$ and the MFG model is time-dependent (see \cite{Andreev16}, where some preconditioners are introduced in order to solve the linear systems appearing in the iterations). We also mention \cite{gomesex}, where the monotonicity of $f$ also plays an important role in order to obtain the convergence of the flows constructed to approximate the solutions. Finally, we refer the reader to the articles \cite{aime10,burgerwolfram} for some numerical methods to solve some non-convex variational MFG.

In this work we study the applicability of several first order proximal methods to solve both problems $(P_h)$ and $(P_h^d)$ with $\nu\geq 0$ being a small,  possibly null,  parameter.  In order to implement these types of methods,  in Section \ref{proxvarphisection} we  compute efficiently the proximity operators of  the cost functionals appearing in $(P_h)$ and $(P_h^d)$. We consider and compare the Predictor-Corrector Proximal Multiplier (PCPM) method proposed by Chen  and Teboulle in \cite{ChenTeb94}, a proximal method based on the splitting of a Monotone  plus Skew (MS) operator, introduced by   Brice\~no-Arias and Combettes in \cite{Siopt1},  and a   primal-dual method proposed by Chambolle and Pock (CP) in  \cite{CPock11}. Depending on whether we split or not the influence of the linear constraints in $(P_h)$ and $(P_h^d)$ we get two different implementations of each algorithm. Loosely speaking,  if we split the operators we increase the number of explicit steps per iteration but we do not need to invert matrices, which sometimes can be costly or even prohibitive. We have observed numerically that methods with splitting can be accelerated by projecting the iterates into some of the constraints. It can be proved that this modification does not alter the convergence of the method (see the Appendix for a proof of this fact in the case of the algorithm by CP). When $\nu=0$, we compare all the three methods in a particular instance of problem $(P)$, taken from \cite{gomesex}, which admits an explicit solution.  All the methods achieve a first-order convergence rate and we observe that  the algorithm CP is the one that performs better.  Next, for an example taken from \cite{benamoucarlier15}, we compare the performances and accuracies of the algorithms CP and  ADMM. We find in this example that for low and zero viscosities the algorithm CP obtains the same accuracy than the ADMM method but with fewer iterations. The situation changes for higher viscosities where we observe faster computation times for the ADMM method. Finally, we show that the method by CP also behaves very well when solving $(P^d_h)$, with computational times and numbers of iterations comparable to those  for  $(P_h)$.

The article is organized as follows: in the next section we set the 
notation that will be used throughout this paper, we recall the 
finite-difference scheme to solve $(P_h)$ proposed by Achdou and 
Capuzzo-Dolcetta in  \cite{AchdouCapuzzo10},
we define the discrete optimization problems studying their main 
properties, and we provide the optimality conditions at a solution 
$(m^h,w^h)$ (which is shown to exist). In particular, we obtain the 
existence of solutions of discrete versions of  
\eqref{mfgestacionario} and \eqref{mfgestacionarioconrestcaja}. In 
Section \ref{als}, we present a short survey of the proximal methods 
considered in this article and we compute the proximity operators of 
the cost functionals appearing in $(P_h)$ and $(P_h^d)$. Finally, in 
Section \ref{sec:num}, we present numerical experiments assessing  
the 
performance  of the different methods in several situations (small or 
null viscosity parameters, density constrained problems, various 
values of $q$,  etc).


%

\section{Discrete MFG and finite-dimensional optimization 
problems}\label{problemadimfinita}

In this section we recall some notation and the finite difference 
approximation 
of the MFG system 
introduced in \cite{AchdouCapuzzo10}. Then we set and study the 
finite-dimensional versions of the optimization problems $(P)$ and 
$(P^d)$, which are called $(P_h)$ and $(P_h^d)$, and we derive 
existence of solutions and their optimality conditions.
\subsection{Finite difference scheme}
 Following   \cite{AchdouCapuzzo10}, we consider 
an uniform grid $\TT_{h}^{2}$ on the two dimensional torus $\TT^{2}$ 
with step size $h>0$  such that 
$N_h:= 1/h$ is an integer. For a given function $y: \TT_{h}^{2}\to 
\RR$ and $0\leq i,j\leq N_h-1$, we 
set $y_{i.j}:= y(x_{i,j})$ (and thus we identify the set of functions 
$y:\TT_{h}^{2}\to \RR$ with $\RR^{N_h\times N_h}$)  and 
$$\ba{c} (D_{1}y)_{i,j}:= \frac{y_{i+1,j}- y_{i,j}}{h}, 
\hspace{0.6cm}  
(D_{2} y)_{i,j}:= \frac{y_{i,j+1}- y_{i,j}}{h}, \\[4pt]
		[D_{h}y]_{i,j}:= \left( (D_{1}  y)_{i,j}, (D_{1}  y)_{i-1,j}, 
		(D_{2}  y)_{i,j}, (D_{2} y)_{i,j-1}\right).\ea $$
The discrete Laplace operator $\Delta_{h} y: \TT_{h}^{2} \to \RR$ is 
defined by 
\be\label{defi:discretelaplaceop} (\Delta_{h} y)_{i,j}:= - 
\frac{1}{h^{2}}\left( 4 y_{i,j}- y_{i+1,j}-y_{i-1,j}- y_{i,j+1}- 
y_{i,j-1}\right).\ee
Given  $a\in \RR$,   set $a^{+}:=\max\{a,0\}$ and  $a^{-}:=a^{+}-a$, 
and define
\be\label{definicionnablahat} \widehat{[D_h y]}_{i,j}=\left( (D_{1} 
y)_{i,j}^{-}, -(D_{1} y)_{i-1,j}^{+},  (D_{2}  y)_{i,j}^{-}, - 
(D_{2}  y)_{i,j-1}^{+}\right).\ee
When $\nu>0$, the  Godunov-type finite-difference scheme proposed  
in  
\cite{AchdouCapuzzo10} to solve \eqref{mfgestacionario} reads as 
follows: Find $u^{h}$, $m^h 
:\TT_{h}^{2}\to \RR$ and $\lambda^{h} \in \RR$ such that, for all 
$0\leq i,j\leq N_h-1$,
\begin{equation}
\label{e:optdiscret}
	\ba{rcl} -\nu (\Delta_h u^h)_{i,j}+   \frac{1}{q'}| \widehat{[D_h 
		u^h]}_{i,j}|^{q'}-\lambda^{h} &=& f(x_{i,j},m^{h}_{i,j}), 
		\\[4pt]
	- \nu (\Delta_h m^{h})_{i,j}- \T_{i,j}(u^{h},m^{h})&=&0,\\[4pt]
	m_{i,j}^{h} \geq 0, \; \; \;  \sum_{i,j} u_{i,j}^{h}=0, & \; & 
	h^{2} \sum_{i,j} m_{i,j}^{h}=1,\ea
\end{equation}
where, for every $u'$, $m': \TT_{h}^{2}\to  \RR$ we set
\be\label{tij}\ba{rcl} h\T_{i,j}(u',m')&:=& -m_{i,j}'| 
\widehat{[D_h u']}_{i,j}|^{\frac{2-q}{q-1}} (D_{1}  
u')_{i,j}^{-}+m_{i-1,j}'| \widehat{[D_h 
u']}_{i-1,j}|^{\frac{2-q}{q-1}} 
(D_{1}  u')_{i-1,j}^{-} \\[4pt]
			\;           & \; & +m_{i+1,j}'| \widehat{[D_h 
			u']}_{i+1,j}|^{\frac{2-q}{q-1}} (D_{1}  u')_{i,j}^{+}	
			-m_{i,j}'| \widehat{[D_h u']}_{i,j}|^{\frac{2-q}{q-1}} 
			(D_{1}  u')_{i-1,j}^{+} \\[4pt]
			\; 	    & \; & -m_{i,j}'	| \widehat{[D_h 
			u']}_{i,j}|^{\frac{2-q}{q-1}} (D_{2}  u')_{i,j}^{-}	
			+m_{i,j-1}'| \widehat{[D_h 
			u']}_{i,j-1}|^{\frac{2-q}{q-1}} 
			(D_{2}  u')_{i,j-1}^{-}\\[4pt]
			\;  & \; &  +m_{i,j+1}'| \widehat{[D_h 
			u']}_{i,j+1}|^{\frac{2-q}{q-1}} (D_{2}  u')_{i,j}^{+}	
			-m_{i,j}'| \widehat{[D_h u']}_{i,j}|^{\frac{2-q}{q-1}} 
			(D_{2} 
			u')_{i,j-1}^{+}.										
				
			\ea\ee
%
%
As in the continuous case, we use the convention   that 
\be\label{convencion}| \widehat{[D_h 
u']}_{i,j}|^{\frac{2-q}{q-1}}\widehat{[D_h u']}_{i,j}=0 
\hspace{0.4cm} 
\mbox{if } \; q>2 \hspace{0.4cm} \mbox{and } \widehat{[D_h 
u']}_{i,j}=0,\ee 
which implies that $\T_{i,j}$ is well defined. Existence of a 
solution $(m^h,u^h,\lambda^h)$  to \eqref{e:optdiscret}  is proved 
in  \cite[Proposition 4 and Proposition 5]{AchdouCapuzzo10} using 
Brouwer's fixed point theorem. Several other features as stability  
and  robustness are also established in  \cite{AchdouCapuzzo10}. If 
$f$ is strictly increasing as a function of its second argument,   
uniqueness of a solution to \eqref{e:optdiscret} is proved in 
\cite[Corollary 1]{AchdouCapuzzo10}, and convergence to the solution 
to \eqref{mfgestacionario} when $h\downarrow 0$ is proven in 
\cite[Theorem 5.3]{AchdouCapuzzoCamilli12}, assuming that the latter 
system admits a unique smooth solution. Finally, we also refer the 
reader to \cite{doi:10.1137/15M1015455} for some convergence results 
of the analogous scheme in the framework of weak solutions for 
time-dependent MFG.

In the remaining of this section, we recover the existence of a 
solution 
to \eqref{e:optdiscret} from a purely 
variational approach. We will also prove the existence of solutions 
to the analogous discretization schemes for system 
\eqref{mfgestacionario} when $\nu=0$ and for system 
\eqref{mfgestacionarioconrestcaja} when $\nu \in [0,+\infty[$. First 
we introduce the associated finite-dimensional optimization problems.
\subsection{Finite-dimenstional optimization}
Inspired by \cite{AchdouCapuzzoCamilli10},    in the context  of the planning problem for MFG,   we introduce in this Section some finite dimensional analogues of the optimization problems $(P)$ and $(P^{d})$ and we study the existence of solutions as well as first-order optimality conditions. We introduce the following notation. Denote by $\RR_{+}$  the set of non-negative real numbers, by $\RR_{-}:= \left(\RR\setminus \RR_{+}\right)\cup \{0\}$,  let 
$K:=\RR_+\times\RR_-\times\RR_+\times\RR_-$, let $q\in ]1,+\infty[$, and  define  
\begin{equation}\label{definicionhatb} \hat{b}\colon\RR\times\RR^4\to\RX\colon(m,w)\mapsto 
\begin{cases}
\displaystyle{\frac{|w|^q}{q\, m^{q-1}}},\quad&\text{if }m>0,\,w\in K,\\
0,&\text{if }(m,w)=(0,0),\\
\pinf,&\text{otherwise}.
\end{cases}
\end{equation}
Let $\M_h:= \RR^{N_h \times N_h}$, $\W_h:= ( \RR^{4})^{N_h \times 
N_h}$ and let $d \in \M_h$ defined as $d_{i,j}:= d(x_{i,j})$, where $d$ is defined in $(P)$. Note that for $h$ small enough, we have that  $ h^2\sum_{i,j} d_{i,j} >1$.  Consider the mappings $A: \M_h \to \M_h $, $B: \W_h \to 
\M_h$ defined as 
$$\ba{c} (A m)_{i.j}=   -\nu(\Delta_h m)_{i,j}, \; \; \; (Bw)_{i, j} 
:=(D_{1}  w^{1})_{i-1,j}+ (D_{1}  w^{2})_{i,j}+ (D_{2} 
w^{3})_{i,j-1}+ (D_{2}  w^{4})_{i,j}.\ea$$
It is easy to check (see e.g. \cite{AchdouCapuzzo10}) that the 
adjoint mappings $A^{*}$ and $B^{*}$ satisfy $(A^{*}y)_{i,j}=- \nu(\Delta_h 
y)_{i,j}$ (i.e. $A$ is symmetric) and $(B^{*}y)_{i,j}= -[D_h y]_{i,j}$ for 
all $y \in \M_h$. In particular,  $\mbox{Im}(B)= \Y_h$ where 
\be\label{ygriegah}  \Y_h:= \left\{ y \in \M_{h} \; ; \; \sum_{i,j} y_{i,j}=0\right\}.\ee
 Indeed, 
note that if $y\in \Y_h$ satisfies $-[D_h y]_{i,j}=(B^{*}y)_{i,j}=0$ for 
all $(i,j)$ then $y$ must be constant and so, since $\sum_{i,j} y_{i,j}=0$, 
we must have that  $y=0$.

Now, recalling the definition of $F$ in \eqref{e:Lq}, define $\B: \M_{h}\times \W_{h} \mapsto \RR \cup \{+\infty\}$ and $\F: \M_{h}\mapsto \RR \cup \{+\infty\}$ as 
\be\label{definicionfuncompletas} \B(m,w)= \sum_{i,j} 
\hat{b}(m_{i,j},w_{i,j})  \hspace{0.3cm} \mbox{and }\hspace{0.2cm} 
\F(m):= \sum_{i,j} F(x_{i,j}, m_{i,j}).\ee
In addition, define the function  $G:   \M_h\times  \W_h \mapsto \M_h\times \RR $  and the closed and convex  set $\D$   as 
\be\label{defGyD}\ba{c}
G(m,w):= (Am+Bw,  h^{2} \sum_{i,j} m_{i,j}), \\[4pt]
\D:= \{ (m',w') \in \M_h \times \W_h \; ; \; m'_{i,j} \leq d_{i,j} \hspace{0.3cm} \mbox{for all $i$, $j$}\}.
\ea\ee
In this work we consider the following discretization of $(P^{d})$ 
$$\inf_{(m,w) \in \M_{h} \times \W_h}  \B(m,w)+ \F(m)  \hspace{0.5cm}
\mbox{s.t. }  \hspace{0.3cm}  G(m,w)=(0,1) \in \M_{h}\times \RR, \; \; \; m \in \D, 
  \eqno(P_h^{d})$$
and the corresponding discretization of $(P)$
$$\inf_{(m,w) \in \M_{h} \times \W_h}  \B(m,w)+ \F(m)  \hspace{0.5cm}
\mbox{s.t. }  \hspace{0.3cm}  G(m,w)=(0,1) \in \M_{h}\times \RR. 
  \eqno(P_{h})$$

\subsection{Existence and optimality conditions of the discrete 
problems $(P_{h})$ and $(P_h^{d})$}

In order to derive necessary conditions for optimality in problems 
$(P_h^{d})$ and $(P_h)$ we need the computation of  $\hat{b}^{\ast}$  
and $\partial \hat{b}^{\ast}$, where $\hat{b}$ is defined in 
\eqref{definicionhatb}. Recall 
that, given a subset $C\subset\RR^{n}$,  $\iota_C$ is defined as 
$\iota_C(c)=0$ if $c\in C$ and $+\infty$ otherwise. If $C$ is 
non-empty,  
closed, and convex, the normal cone to $C$ 
at $x\in C$ is defined by 
$$ N_{C}(x):= \left\{ y \in \RR^{n} \; ; \;  y\cdot (c-x)\leq 0, 
\; \; \forall \; c\in C\right\}.$$
 If $C$ is a cone, we will denote by $C^{-}$ its polar cone, 
 defined 
 as 
 $C^{-}:=\left\{c^{*} \in \RR^{n} \; ; \;   c^{*}\cdot c  \leq 
 0, 
 \; \forall c\in C\right\}.$
 We also recall that for a given  a proper lower 
  semi-continuous (l.s.c.) convex function 
 $\ell: \RR^{n} \mapsto ]-\infty,+\infty]$, the Fenchel 
 conjugate $\ell^{\ast}: 
 \RR^{n} \mapsto ]-\infty, +\infty]$ is defined by
$$\ell^{\ast}(p):= \sup_{x\in \RR^{n}} \left\{   p\cdot x - 
\ell(x)\right\} $$
and the subdifferential $\partial \ell(x)$ of $\ell$ at $x$ is 
defined as 
the set of points $p\in \RR^{n}$ such that 
\be\label{subdiferencialdef}\ell(x)+   p\cdot(y-x)\leq \ell (y) 
\hspace{0.3cm} \forall 
\; y\in \RR^{n}.\ee
A useful characterization of the subdifferential states that at 
every $x\in \mbox{dom}(\ell):=\{ x\in \RR^{n} \; ; \; 
\ell(x)<\infty\}$, we have
\be\label{subdifferentiacharacterization}\partial \ell(x) = 
\mbox{argmax}_{p\in \RR^{n}} \left\{ p\cdot x- 
\ell^{\ast}(p)\right\}.\ee
For $x 
\in \RR^4$ we set  $P_Kx$ for its projection into the set $K$.
%
\begin{lemma}
\label{l:partial} 
The function $\hat{b}$ is proper, convex, and l.s.c. Moreover,  setting   
$$C:=\Menge{(\alpha,\beta)\in\RR\times\RR^4}{\alpha+
\displaystyle{\frac{1}{q'}|P_K \beta|^{q'}\leq0}}$$
we have that $\hat{b}^{\ast}=\iota_{C}$ and 
\begin{equation}\label{subdifferentialhatb}
\partial\hat{b}\colon(m,w)\mapsto
\begin{cases}
\displaystyle{\Big(-
\frac{1}{q'}\frac{|w|^q}{m^q},
\frac{|w|^{q-2}}{m^{q-1}}w\Big)}+\{0\}\times N_K(w),\quad&\text{if
 }m>0,\\
C,&\text{if 
}(m,w)=(0,0),\\
\emptyset,&\text{{\rm otherwise}}.\end{cases}
\end{equation}
\end{lemma}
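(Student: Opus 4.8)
The plan is to recognize $\hat b$ as a \emph{closed perspective function}, which delivers properness, convexity and lower semicontinuity in one stroke, then to compute $\hat b^{\ast}$ directly from the definition of the Fenchel conjugate, and finally to read off $\partial\hat b$ from the smooth structure on $\{m>0\}$ together with the characterization \eqref{subdifferentiacharacterization} at the origin.

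For the regularity of $\hat b$, I would introduce $\phi\colon\RR^4\to\RX$, $\phi(v):=\frac1q|v|^q+\iota_K(v)$, which is proper, convex and l.s.c.\ as a sum of such functions ($K$ being closed and convex). Since $K$ is a cone, for $m>0$ one has $w\in K\iff w/m\in K$, so $\hat b(m,w)=m\,\phi(w/m)$ on $\{m>0\}$; that is, $\hat b$ agrees there with the perspective of $\phi$. Because $q>1$, the power term is superlinear, so the recession function of $\phi$ is $\iota_{\{0\}}$, and the lower semicontinuous hull of the perspective takes the value $0$ at $(0,0)$ and $\pinf$ on the rest of $\{m\le0\}$ — exactly the definition of $\hat b$. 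Hence $\hat b$ is the closed perspective of $\phi$ and is, in particular, proper, convex and l.s.c.\ (see \cite{BausCombettes11}).

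Next I would compute $\hat b^{\ast}(\alpha,\beta)=\sup_{(m,w)}[\alpha m+\beta\cdot w-\hat b(m,w)]$ by splitting according to $\dom\hat b=\{(0,0)\}\cup(\{m>0\}\times K)$. The point $(0,0)$ contributes $0$. On the branch $m>0$, I first fix $m$ and maximize over $w\in K$. Writing $\gamma:=P_K\beta$ and using Moreau's decomposition $\beta=P_K\beta+P_{K^-}\beta$ with $P_K\beta\perp P_{K^-}\beta$ and $P_{K^-}\beta\in K^-$, one gets $\beta\cdot w\le\gamma\cdot w$ for all $w\in K$; moreover the unconstrained maximizer $w^{\ast}$ of $w\mapsto\gamma\cdot w-\frac{1}{qm^{q-1}}|w|^q$ is a nonnegative multiple of $\gamma$, hence lies in $K$ and satisfies $P_{K^-}\beta\cdot w^{\ast}=0$. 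Thus the constrained and unconstrained suprema coincide and equal the conjugate of $w\mapsto\frac{1}{qm^{q-1}}|w|^q$ at $\gamma$, namely $\frac{m}{q'}|P_K\beta|^{q'}$ (using the identity $(q-1)(q'-1)=1$). The remaining supremum $\sup_{m>0}m\big(\alpha+\frac1{q'}|P_K\beta|^{q'}\big)$ is $0$ if $(\alpha,\beta)\in C$ and $\pinf$ otherwise; taking the maximum with the $0$ coming from $(0,0)$ yields $\hat b^{\ast}=\iota_C$.

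Finally, for $\partial\hat b$ I would treat three cases. When $m>0$ I write $\hat b(m,w)=\psi(m,w)+\iota_K(w)$, where $\psi(m,w)=\frac{|w|^q}{qm^{q-1}}$ is of class $C^1$ on $\{m>0\}\times\RR^4$ (here $q>1$ guarantees differentiability of $|w|^q$ at $w=0$); the Moreau--Rockafellar sum rule applies because $\psi$ is finite and continuous, giving $\partial\hat b(m,w)=\nabla\psi(m,w)+\{0\}\times N_K(w)$, and computing $\nabla\psi$ produces the stated pair. At $(m,w)=(0,0)\in\dom\hat b$, the characterization \eqref{subdifferentiacharacterization} gives $\partial\hat b(0,0)=\mathrm{argmax}_{(\alpha,\beta)\in C}[\alpha\cdot0+\beta\cdot0]=C$. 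In all remaining cases $\hat b=\pinf$, so $\partial\hat b=\emptyset$. The main obstacle I expect is the conjugate computation on the branch $m>0$: reducing the constrained supremum over $w\in K$ to an explicit unconstrained conjugate requires the polar-cone/Moreau argument, simultaneously showing that replacing $\beta$ by $P_K\beta$ leaves the value unchanged and that the unconstrained maximizer stays in $K$. A secondary point needing care is matching the boundary value at $m=0$ produced by the closed-perspective construction (via the recession function of $\phi$) with the prescribed definition of $\hat b$.
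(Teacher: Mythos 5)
Your proof is correct, and it reaches the same three conclusions by a noticeably different route. For convexity and lower semicontinuity, the paper does not invoke the closed-perspective/recession-function machinery: it writes $\hat{b}=b+\iota_{\RR\times K}$ and gets the regularity of $b$ for free from the sup-of-affine representation $b(m,w)=\sup_{(\alpha,\beta)\in E}\{\alpha m+\beta\cdot w\}$ with $E=\{(\alpha,\beta):\alpha+\frac{1}{q'}|\beta|^{q'}\leq 0\}$ (quoted from the optimal-transport literature), which simultaneously hands it $b^{\ast}=\iota_E$. The conjugate is then obtained structurally, as the infimal convolution $\hat{b}^{\ast}=b^{\ast}\infconv\iota_{\RR\times K}^{\ast}=\iota_E\infconv\iota_{\{0\}\times K^-}=\iota_{E+\{0\}\times K^-}$, the key step being the set identity $C=E+\{0\}\times K^-$ (one inclusion via the Moreau decomposition $\beta=P_K\beta+P_{K^-}\beta$, the other via the nonexpansiveness estimate $|P_K(\beta+n)|\leq|\beta|$ for $n\in K^-$). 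You instead compute $\hat{b}^{\ast}$ directly from the definition, splitting the supremum over the domain and using the orthogonality $P_K\beta\cdot P_{K^-}\beta=0$ to show that the unconstrained maximizer is a nonnegative multiple of $P_K\beta$, hence lies in the cone $K$; this is a self-contained calculation that avoids both the external reference for $b^{\ast}=\iota_E$ and the exchange formula from \cite{attouch}, at the price of redoing the power-conjugate computation by hand. For the subdifferential the two arguments are essentially the same (sum rule on $\{m>0\}$, the Argmax characterization at the origin, emptiness elsewhere), except that the paper first computes $\partial b$ as an Argmax over $E$ and then adds $\{0\}\times N_K(w)$, whereas you differentiate the smooth part of $\hat{b}$ directly and use $C$ at the origin; both are valid, and your version is marginally shorter there. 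All the delicate points you flag (the cone membership of the unconstrained maximizer, the boundary value at $m=0$, differentiability of $|w|^q$ at $0$ for $1<q<2$) are handled correctly.
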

\begin{proof} Note that   $ \hat{b}(m,w)=b(m,w)+\iota_{\RR\times 
K}(m,w)$, where $b: \RR \times \RR^4 \to [0,+\infty]$ is defined 
by 
$$\ba{l}
b(m,w):= \left\{
\begin{array}{ll}
\frac{|w|^q}{q\, m^{q-1}} & \mbox{if } m>0,\\
0, & \mbox{if } (m,w)=(0,0),\\
+\infty, & \mbox{otherwise,}
\end{array}
\right. \hspace{0.3cm}
\ea
$$
or equivalently (see  e.g. \cite{santambrogiobook}),
\be\label{bqdualrepresentation}
b(m,w)= \sup_{(\alpha,\beta)\in E}\left\{\alpha 
m+\beta \cdot w\right\}, \; \; \mbox{where } \;  
E:=\left\{(\alpha,\beta)\in\RR\times\RR^4 \; ; \;
 \alpha+\frac{1}{q'}|\beta|^{q'}\leq 0\right\}.
\ee
Since  $\RR\times K$  is convex, closed and non-empty, we have that $b$ and  $\hat{b}$ are proper, convex and l.s.c. Moreover, \eqref{bqdualrepresentation} implies that $b^{\ast}=\iota_{E}$.  In order to compute $\hat{b}^{\ast}$ and $\partial \hat{b}$ we first prove that $C=E+\{0\}\times K^-$. Indeed,  every $\beta \in \RR^{4}$ can be written as 
$\beta= P_{K}(\beta)+ P_{K^{-}}(\beta)$ from which the inclusion  
$C\subseteq E+\{0\}\times K^-$ follows. Conversely, for any $\beta \in \RR^4$ 
and $n\in K^{-}$ we have that 
$|P_{K}(\beta+n)|=|P_{K}(\beta+n)-P_{K}(n)|\leq |\beta|$ and so we get $  
E+\{0\}\times K^- \subseteq C$. Now, 
using the identity
$\hat{b} =b +\iota_{\RR\times K}$, the fact that    $b$ is finite and 
continuous at $(1,0)\in \M_{h} \times \W_{h}$ and that $ \iota_{\RR\times 
K}(1,0)=0$,  by \cite[Theorem 9.4.1]{attouch} we have that 
\be\label{bqast1}\ba{rcl}\hat{b}^{\ast}(\alpha,\beta)&=& \inf_{(\alpha',\beta') \in \RR \times 
\RR^{4}}\left\{ b^{\ast}(\alpha-\alpha', \beta-\beta')+ \iota_{\RR\times 
K}^{\ast}(\alpha',\beta')\right\} \\[4pt]
\; &=&\inf_{(\alpha',\beta') \in \RR \times 
\RR^{4}}\left\{ \iota_{E}(\alpha-\alpha', \beta-\beta')+ 
\iota_{\RR\times K}^{\ast}(\alpha',\beta')\right\}.\ea\ee
 It is easy to see that $\iota_{\RR\times 
K}^{\ast}(\alpha',\beta')=\iota_{\{0\}\times 
K^{-}}(\alpha',\beta')$. Using that $C=E+\{0\}\times K^-$, from 
\eqref{bqast1} we obtain  
 $\hat{b}^{\ast}=\iota_{C}$. 
 
Let us now prove \eqref{subdifferentialhatb}.  Since $\hat{b} =b +\iota_{\RR\times K}$, it follows from  
\cite[Chapter 1, Proposition 5.6]{MR0463993}  that 
for every $(m,w)\in\RR\times\RR^4$,
$\partial\hat{b}(m,w)=\partial b(m,w)+N_{\RR\times 
K}(m,w)=\p b(m,w)+\{0\}\times N_K(w)$.  Now, using \eqref{subdifferentiacharacterization} and $b^{\ast}=\iota_{E}$ we get
\be\label{maximizationonE}\p b(m,w)=\Argmax{(\alpha,\beta)\in 
E}\left\{\alpha m+\beta \cdot w\right\},\ee
from which we readily obtain that $\partial b(m,w)=\emptyset$ 
if $m<0$ and if $m=0$ and $w\neq0$. Thus, the third case in 
\eqref{subdifferentialhatb} follows.  If $m>0$,  then $b$ is differentiable and so 
$$\p 
b(m,w)=\left\{\left(-\frac{1}{q'}\frac{|w|^{q}}{m^q},
\frac{|w|^{q-2}}{m^{q-1}}w\right)\right\},$$
from which the first case in \eqref{subdifferentialhatb} follows.
Finally, if $(m, w)=(0,0)$ using \eqref{maximizationonE} we get 
that $\partial b(0,0)=E$. On the other hand, note  that $N_K(0)=K^-$ 
and  so $\p \hat{b}(0,0)=C$. The result follows.\end{proof}\medskip

In the following result we prove a {\it qualification condition}, which will be useful for establishing optimality conditions.
\begin{lemma}\label{separacionsubdiferencial}  There exists $(\tilde{m}, \tilde{w}) \in \M_{h} \times \W_h$ such that 
\be\label{qualificationcondition}
G(\tilde{m},\tilde{w})=(0,1), \hspace{0.5cm} \tilde{w}\in \mbox{{\rm int}}(K), \hspace{0.5cm} 0<\tilde{m}_{i,j}<d_{i,j} \; \; \forall \; (i,j).
\ee
\end{lemma}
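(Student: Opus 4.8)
The plan is to construct $(\tilde m,\tilde w)$ explicitly. For the density I would take $\tilde m:=\theta d$ with $\theta:=1/(h^2\sum_{i,j}d_{i,j})$. Since $d_{i,j}>0$ and, for $h$ small, $h^2\sum_{i,j}d_{i,j}>1$, we have $\theta\in\,]0,1[$, so that $0<\tilde m_{i,j}=\theta d_{i,j}<d_{i,j}$ for every $(i,j)$ and $h^2\sum_{i,j}\tilde m_{i,j}=\theta\,h^2\sum_{i,j}d_{i,j}=1$. The box and integral conditions in \eqref{qualificationcondition} are then satisfied, and since the second component of $G$ is exactly the mass constraint, it only remains to find $\tilde w\in\inte(K)$ solving the linear equation $B\tilde w=-A\tilde m=\nu\Delta_h\tilde m$.

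Next I would check that $-A\tilde m$ lies in $\mathrm{Im}(B)$. By \eqref{ygriegah} this amounts to $\sum_{i,j}(A\tilde m)_{i,j}=0$. This is immediate from \eqref{defi:discretelaplaceop} and the periodicity of the torus: in $\sum_{i,j}(\Delta_h\tilde m)_{i,j}$ each nodal value enters with total coefficient $-(4-1-1-1-1)/h^2=0$, so $\sum_{i,j}(\Delta_h\tilde m)_{i,j}=0$ and hence $-A\tilde m=\nu\Delta_h\tilde m\in\Y_h=\mathrm{Im}(B)$. Equivalently, $\sum_{i,j}(A\tilde m)_{i,j}=\sum_{i,j}\tilde m_{i,j}(A^{*}\mathbf 1)_{i,j}=0$ since $A^{*}\mathbf 1=-\nu\Delta_h\mathbf 1=0$. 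Therefore there exists $w_0\in\W_h$ with $Bw_0=-A\tilde m$.

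The main step is then to correct $w_0$ into the open cone without breaking the constraint. Here I would exploit that every spatially constant field lies in $\ker B$: if $w^k\equiv a_k$ for $k\in\{1,2,3,4\}$, then each difference quotient entering the definition of $B$ vanishes, so $B(a_1,a_2,a_3,a_4)=0$. Hence $\tilde w:=w_0+(a_1,a_2,a_3,a_4)$ still solves $B\tilde w=-A\tilde m$ for any constants $a_k$. As the grid is finite, the components of $w_0$ are bounded, so choosing $a_1,a_3$ positive and $a_2,a_4$ negative, all sufficiently large in modulus, yields $(\tilde w^1)_{i,j}>0$, $(\tilde w^2)_{i,j}<0$, $(\tilde w^3)_{i,j}>0$, $(\tilde w^4)_{i,j}<0$ for all $(i,j)$, that is $\tilde w\in\inte(K)$. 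The resulting pair $(\tilde m,\tilde w)$ satisfies \eqref{qualificationcondition}.

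I do not expect a genuine obstacle: the only delicate point is the range condition $-A\tilde m\in\mathrm{Im}(B)$, which holds precisely because the periodic (torus) structure forces the discrete Laplacian to sum to zero, while the sign-correction into $\inte(K)$ is free since the constants belong to $\ker B$. In the degenerate case $\nu=0$ the construction is even simpler, as $A=0$: one may take the constant field $\tilde w\equiv(1,-1,1,-1)\in\inte(K)\cap\ker B$, which gives $B\tilde w=0=-A\tilde m$ directly.
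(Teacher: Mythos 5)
Your proposal is correct and follows essentially the same route as the paper: pick a strictly feasible density summing to $1/h^2$, use $\mathrm{Im}(B)=\Y_h$ to solve $B\hat w=-A\tilde m$, and then shift $\hat w$ by a componentwise constant field (which lies in $\ker B$) to land strictly inside $K$. Your version is merely more explicit in two places where the paper only asserts the facts, namely the concrete choice $\tilde m=\theta d$ and the verification that $\sum_{i,j}(\Delta_h\tilde m)_{i,j}=0$ on the periodic grid.
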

\begin{proof} Since $d_{i,j}>0$ for all $(i,j)$ and $h^{2}\sum_{i,j} d_{i,j} > 1$, there exists  $\tilde{m} \in \M_{h}$ satisfying   $0<\tilde{m}_{i,j}<d_{i,j}$  for all $(i,j)$ and $h^{2}\sum_{i,j} \tilde{m}_{i,j}=1$. Since $ A\tilde{m} \in \Y_h$ (recall \eqref{ygriegah}) and $\mbox{Im}(B)= \Y_h$,  there exists $\hat{w}\in \W_h$ such that $A \tilde{m}+ B\hat{w}=0$. Given $\delta>0$ and letting $\tilde{w}:= \hat{w}+ c_{\delta}$ with 
$$(c_{\delta})_{i,j}:= \left(\max_{i,j}|w_{i,j}^1|+ \delta, \;  -\max_{i,j}|w_{i,j}^2|- \delta, \; \max_{i,j}|w_{i,j}^3|+ \delta, \;  -\max_{i,j}|w_{i,j}^4|- \delta\right) \hspace{0.4cm} \mbox{for all $i$, $j$},$$ 
we have that $\tilde{w} \in \mbox{int}(K)$ and  $G(\tilde{m}, \tilde{w})=(0,1)$. 
\end{proof}\medskip


Now, we prove the main result of this Section. 
\begin{theorem}\label{condicionesoptimalidad} 
For any $\nu \geq 0$ the following assertions hold true: \smallskip\\
{\rm(i)}  Problems $(P_{h}^{d})$ and $(P_{h})$ admit  at least one solution and the optimal costs are finite. \smallskip\\
{\rm(ii)} Let $(m^h, w^h)$ be a solution to $(P_{h}^{d})$. Then, there exists $(u^h, \mu^h, p^h, \lambda^h) \in (\M_h)^{3} \times \RR$ such 
that \small
\be\label{mfgdiscretogeneral}\ba{rcl} -\nu(\Delta_h u^h)_{i,j}+   
\frac{1}{q'}| \widehat{[D_h 
u^h]}_{i,j}|^{q'}+\mu_{i,j}^h-p_{i,j}^{h}-\lambda^{h} &=& f(x_{i,j},m_{i,j}^{h}), 
\\[4pt]
		- \nu(\Delta_h m^{h})_{i,j}- \T_{i,j}(u^{h},m^{h})&=&0,\\[4pt]
		\sum_{i,j} u_{i,j}^{h}=0, & \; &0\leq m_{i,j}^{h} \leq d_{i,j}, \; \;  h^{2} \sum_{i,j} m_{i,j}^{h}=1, \\[4pt]
		\mu_{i,j}^{h}\geq 0, \; \; p_{i,j}^{h} \geq 0, &\;&  m_{i,j}^{h}  \mu_{i,j}^{h}=0, \; \; (d_{i,j}-m_{i,j}^{h})p_{i,j}^{h}=0.
		\ea \ee \normalsize
{\rm(iii)} Let $(m^h, w^h)$ be a solution to $(P_{h})$. Then, there exists $(u^h, \mu^h, \lambda^h) \in (\M_h)^{2} \times \RR$ such 
that \small
\be\label{mfgdiscretogeneralsinrestricciondedensidad}\ba{rcl} -\nu(\Delta_h u^h)_{i,j}+   
\frac{1}{q'}| \widehat{[D_h 
u^h]}_{i,j}|^{q'}+\mu_{i,j}^h-\lambda^{h} &=& f(x_{i,j},m_{i,j}^{h}), 
\\[4pt]
		- \nu(\Delta_h m^{h})_{i,j}- \T_{i,j}(u^{h},m^{h})&=&0,\\[4pt]
		 \sum_{i,j} u_{i,j}^{h}=0. &\;&  m_{i,j}^{h}\geq 0, \; \; h^{2} \sum_{i,j} m_{i,j}^{h}=1,\\[4pt]
		\mu_{i,j}^{h}\geq 0,   & \; &  m_{i,j}^{h}  \mu_{i,j}^{h}=0.
		\ea \ee \normalsize

\end{theorem}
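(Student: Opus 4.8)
The strategy is to prove (i) by the direct method of the calculus of variations and (ii)--(iii) by a Karush--Kuhn--Tucker analysis, using Lemma \ref{separacionsubdiferencial} as the constraint qualification and Lemma \ref{l:partial} to translate the resulting optimality inclusion into the finite-difference system.

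\emph{Existence and finiteness (i).} I would first use Lemma \ref{separacionsubdiferencial}: the point $(\tilde m,\tilde w)$ is feasible for both problems, and since $\tilde m_{i,j}>0$ and $\tilde w\in\inte(K)$ one has $\B(\tilde m,\tilde w)+\F(\tilde m)<+\infty$, so the problems are proper. On the feasible set the constraints $m_{i,j}\geq0$ (forced by $\dom\hat b$) and $h^{2}\sum_{i,j}m_{i,j}=1$ confine $m$ to a compact box, on which the continuous function $\F$ is bounded; together with $\B\geq0$ this bounds the cost from below. For coercivity, on a feasible sublevel set $m$ is bounded by some $M>0$, whence $\hat b(m_{i,j},w_{i,j})\geq|w_{i,j}|^{q}/(qM^{q-1})$ and $w$ is bounded as well. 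Since $\hat b$ is l.s.c.\ by Lemma \ref{l:partial}, $\B$ is l.s.c., $\F$ is continuous on $\{m\geq0\}$, and the feasible set is closed, a minimizing sequence has a convergent subsequence whose limit is feasible and optimal, giving existence and finiteness for both problems.

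\emph{Existence of multipliers (ii).} I would write the feasible set of $(P_h^{d})$ as $S:=\{(m,w)\;;\;G(m,w)=(0,1)\}\cap(\D\times\W_h)$, a closed convex set, and minimize $\B+\F+\iota_S$. Extending $f(x_{i,j},\cdot)$ continuously to negative arguments, $\F$ agrees on $\dom\B$ with a $C^{1}$ function $\F_0$ with $\nabla\F_0(m)=(f(x_{i,j},m_{i,j}))_{i,j}$, so no monotonicity of $f$ is needed. Let $(m^h,w^h)$ solve $(P_h^{d})$. Since $\B+\iota_S$ is convex and $\F_0$ is smooth, the first-order condition reads $-(\nabla\F_0(m^h),0)\in\partial(\B+\iota_S)(m^h,w^h)$. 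The qualification condition of Lemma \ref{separacionsubdiferencial} places $(\tilde m,\tilde w)$ in $\inte(\dom\B)\cap S$ with $\tilde m$ strictly below $d$, which validates both sum rules $\partial(\B+\iota_S)=\partial\B+N_S$ and $N_S(m^h,w^h)=\ran(G^{*})+\big(N_\D(m^h)\times\{0\}\big)$. Hence there exist $u^h\in\M_h$, a scalar multiplier, $p^h\in N_\D(m^h)$ (so $p^h_{i,j}\geq0$ and $(d_{i,j}-m^h_{i,j})p^h_{i,j}=0$), and a subgradient $(\alpha_{i,j},\beta_{i,j})\in\partial\hat b(m^h_{i,j},w^h_{i,j})$ realizing stationarity in $m$ and $w$.

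\emph{Identification with the scheme and main obstacle.} Using $B^{*}u^h=-[D_h u^h]$, the $w$-stationarity gives $\beta_{i,j}=-[D_h u^h]_{i,j}$ (with the convention that $-u^h$ is the multiplier of the continuity constraint), and a term-by-term computation with the operators $(\cdot)^{\pm}$ yields the key identity $P_K\beta_{i,j}=\widehat{[D_h u^h]}_{i,j}$. Where $m^h_{i,j}>0$, \eqref{subdifferentialhatb} gives $\beta_{i,j}=|w^h_{i,j}|^{q-2}w^h_{i,j}/(m^h_{i,j})^{q-1}+\zeta_{i,j}$ with $\zeta_{i,j}\in N_K(w^h_{i,j})$; the first summand is a positive multiple of $w^h_{i,j}\in K$ and so equals $P_K\beta_{i,j}$, whence $w^h_{i,j}=m^h_{i,j}\,|\widehat{[D_h u^h]}_{i,j}|^{q'-2}\,\widehat{[D_h u^h]}_{i,j}$ (with convention \eqref{convencion} for $q>2$) and $|w^h_{i,j}|^{q}/(m^h_{i,j})^{q}=|\widehat{[D_h u^h]}_{i,j}|^{q'}$. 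Substituting this reconstructed flux (which holds trivially, as $0$, where $m^h_{i,j}=0$) into $Bw$ reproduces $-\T_{i,j}(u^h,m^h)$, so $Am^h+Bw^h=0$ becomes the continuity equation of \eqref{mfgdiscretogeneral}. Setting $\mu^h_{i,j}:=-\frac{1}{q'}|\widehat{[D_h u^h]}_{i,j}|^{q'}-\alpha_{i,j}$, the inclusion $(\alpha_{i,j},\beta_{i,j})\in\partial\hat b\subseteq C$ forces $\mu^h_{i,j}\geq0$, while on $\{m^h_{i,j}>0\}$ the exact value of $\alpha_{i,j}$ forces $\mu^h_{i,j}=0$, i.e.\ $m^h_{i,j}\mu^h_{i,j}=0$; the $m$-stationarity is then the first line of \eqref{mfgdiscretogeneral}, with $\lambda^h$ the rescaled mass multiplier and $\sum_{i,j}u^h_{i,j}=0$ obtained by the admissible shift of $u^h$ by a constant. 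Part (iii) is the case without box constraint, where $N_\D$ and $p^h$ disappear and \eqref{mfgdiscretogeneral} reduces to \eqref{mfgdiscretogeneralsinrestricciondedensidad}. I expect this last identification, rather than the abstract existence of multipliers, to be the hard part: one must check $P_K(-[D_h u^h])=\widehat{[D_h u^h]}$ and $Bw^h=-\T_{i,j}(u^h,m^h)$ component by component and treat the degenerate strata with care---the points where $m^h_{i,j}=0$ (so that $\partial\hat b(0,0)=C$ is a full set and $\mu^h_{i,j}$ absorbs the slack), the points where $w^h_{i,j}\in\partial K$ (so that $N_K(w^h_{i,j})\neq\{0\}$), and the convention \eqref{convencion} when $q>2$.
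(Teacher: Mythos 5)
Your proposal is correct and follows essentially the same route as the paper: existence by the direct method with coercivity in $w$ coming from $\hat b(m,w)\geq |w|^q/(qM^{q-1})$ on the mass-constrained set, then the inclusion $(-\nabla\F^{+}(m^h),0)\in\partial\bigl(\B+\iota_{G^{-1}(0,1)}+\iota_{\D}\bigr)(m^h,w^h)$, the sum rule validated by Lemma \ref{separacionsubdiferencial}, the normal-cone computations, and the identification of the scheme via Lemma \ref{l:partial}, including the key facts $P_K(-[D_h u^h]_{i,j})=\widehat{[D_h u^h]}_{i,j}$ and $\partial\hat b(0,0)=C$ to define $\mu^h$ as the slack. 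The only cosmetic difference is that you detach the smooth part by extending $f$ to a $C^1$ function, whereas the paper obtains the same inclusion by a directional-derivative limit restricted to $m'\geq 0$; both arguments are equivalent and neither uses monotonicity of $f$.
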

\begin{proof} We only prove {\rm(i)} and {\rm(ii)} since the proof of {\rm(iii)} is analogous to that of {\rm(ii)}.\\
 {\it Proof of assertion {\rm (i)}: }  
Lemma \ref{separacionsubdiferencial}  implies the existence of   $(\tilde{m}, \tilde{w})$ feasible for both problems and having a finite cost. 
In order to prove the existence of an optimum for $(P_h^{d})$ or for $(P_h)$, note that since 
%
$\hat{b}(m_{i.j},w_{i,j})= \hat{b}(m_{i,j},w_{i,j})+ 
\iota_{\RR_{+}}(m_{i,j})$ and that we have the constraint   $h^{2} 
\sum_{i,j} m_{i,j}=1$,   any   minimizing sequence $(m^n,w^n)$   satisfying  that $|(m^n,w^n)|\to \infty$ must 
satisfy that, except for some subsequence,  there exists $(i,j)$ such that 
$|w^{n}_{i,j}| \to \infty$.  Independently of the value of $m_{i,j}^{n}$, 
we obtain  $\hat{b}(m_{i,j}^{n}, w_{i,j}^{n}) \to \infty$.  Since the continuity of $F$ and boundedness of $m^n$ imply that $F(x_{i,j},m_{i,j}^{n})$ is uniformly bounded in $n$ and $(i,j)$, we get that 
$\sum_{i,j}\left[ \hat{b}(m_{i,j}^{n}, w_{i,j}^{n})+ 
F(x_{i,j},m_{i,j}^{n})\right] \to \infty$, which implies that any minimizing sequence must be bounded. Since, in addition, the cost is l.s.c., we obtain that, 
independently of the value of $\nu \geq 0$, problems $(P_h^{d})$ and $(P_h)$   admit at 
least one solution $(m^{h},w^{h})$. \smallskip\\
 {\it Proof of assertion {\rm (ii)}: } Recalling \eqref{definicionfuncompletas} and \eqref{defGyD}, problem $(P_h^{d})$ can be written as
$$\inf_{(m,w) \in \M_{h} \times \W_h} \B(m,w) +\F(m)+ 
\iota_{G^{-1}(0,1)}(m,w)+   \iota_{\D}(m).$$
For $m\in \M_h$ such that $m_{i,j}\geq 0$ for all $i,j$, we set $ (\nabla \F^+(m))_{i,j}:= f(x_{i,j},m_{i,j})\in \RR$ for all $i,j$.
Let us prove that at the optimum $(m^h,w^h)$ 
\be\label{nablain}(-\nabla \F^{+}(m^h),0) \in  \partial_{(m,w)} E(m^h,w^h), \hspace{0.3cm} \mbox{where } \; E:=\B+\iota_{G^{-1}(0,1)}+ \iota_{\D}.  \ee
Indeed, by optimality, for each $(m',w')$ such that $(m')_{i,j} \geq 0 $ for all $(i,j)$,
 we have 
$$ \F(m^h)+ E(m^h, w^h) \leq \F(m^{h}+ \tau (m'-m^h)) + E(m^{h}+ \tau (m'-m^h),w^{h}+ \tau (w'-w^h)),$$
for every $\tau \in ]0,1]$, and so \small
\be\label{diferenciasdeFe}
-\frac{1}{\tau} \left(\F(m^{h}+ \tau (m'-m^h))- \F(m^h)\right) \leq \frac{1}{\tau}\left(E(m^{h}+ \tau (m'-m^h),w^{h}+ \tau (w'-w^h))- E(m^h, w^h) \right).
\ee \normalsize
Using the convexity of $E$, the right-hand-side   of \eqref{diferenciasdeFe}  is bounded by its value at $\tau=1$, i.e. $E(m',w')-E(m^h, w^h)$.  On the other hand, the continuity of $f$ implies that the left-hand-side  of \eqref{diferenciasdeFe}  converges to $ -\nabla \F^{+}(m^h) \cdot (m'-m^h)$ as $\tau \downarrow 0$  and, hence,
\be\label{subgradientE}  E(m^h, w^h)-\nabla \F^{+}(m^h) \cdot (m'-m^h) \leq  E(m',w').\ee
Now, if $(m')_{i,j} <0$ 
for some $(i,j)$  then the right hand side of  \eqref{subgradientE} is $+\infty$ and the inequality is trivially verified.  
Relation \eqref{nablain} follows from the definition \eqref{subdiferencialdef} and \eqref{subgradientE}. 

Now, let $(\tilde{m},\tilde{w})$ satisfying \eqref{qualificationcondition} in Lemma \ref{separacionsubdiferencial}. Since $(m,w) \mapsto \B(m,w)$ is finite and continuous at 
$(\tilde{m}, \tilde{w})$ and $\left(\iota_{G^{-1}(0,1)}+\iota_{\D}\right)(\tilde{m}, \tilde{w}) =0$, by 
\cite[Chapter 1, Proposition 5.6]{MR0463993} at the  optimum $(m^h,w^h)$  
we have 
\be\label{ecuaciondeoptimalidad}(-\nabla \F^{+}(m^h),0) \in \partial_{(m,w)} \B(m^{h},w^{h})  + \partial_{(m,w)} \left(\iota_{G^{-1}(0,1)}+\iota_{\D}\right)(m^h, w^h).\ee
Using that $\iota_{G^{-1}(0,1)}$ is finite at $(\tilde{m},\tilde{w})$ and that $\iota_{\D}$ is continuous at $(\tilde{m},\tilde{w})$, we obtain
$$\ba{rcl}  \partial_{(m,w)} \left(\iota_{G^{-1}(0,1)}+\iota_{\D}\right)(m^h, w^h)&=&   \partial_{(m,w)} \iota_{G^{-1}(0,1)}(m^h,w^h)+\partial_{(m,w)}\iota_{\D}(m^h,w^h),\\[4pt]
\; &=&  N_{G^{-1}(0,1)}(m^h,w^h)+ N_{\D}(m^h,w^h).\ea$$
Clearly, \small
\be\label{cononormalG}
\ba{rcl} N_{G^{-1}(0,1)}(m^h,w^h)&=& \left\{ (-A^{*} u +\lambda {\bf 
1}_{\M_h}, -B^{*}u) \; ; \; u \in \M_h, \; \; \lambda \in \RR\right\},\\[4pt]
N_{\D}(m^h,w^h)&=&\left\{ p \in \M_{h} \; ; p_{i,j} \geq 0, \; \; p_{i,j}(d_{i,j}-m_{i,j}^{h})=0 \; \mbox{for all $i$, $j$}\right\}\times \{0\},
\ea
\ee \normalsize
where $( {\bf 1}_{\M_h})_{i,j}=1$ for all $i$, $j$. 
Using that $(A^{*}u)_{i,j}= -\nu(\Delta_{h}u)_{i,j}$ and 
$(B^{*}u)_{i,j}=-[D_hu]_{i,j}$, relations 
\eqref{ecuaciondeoptimalidad}-\eqref{cononormalG} yield  the existence of 
$u^{h} \in \M_h$, $p^h   \in \M_h$ such that $(p^h,0)\in  N_{\D}(m^h,w^h)$, and   $\lambda^{h} \in \RR$ such that 
\be\label{inclusionsubdiferencial}
\left(-\nu(\Delta_{h}u^{h})_{i,j} -p^{h}_{i,j}- \lambda^{h}-f(x_{i,j},m_{i,j}^{h}),-[D_hu^{h}]_{i,j}\right)\in  \partial 
\hat{b}(m_{i,j}^{h} , w_{i,j}^{h}) \hspace{0.4cm} \forall i,j. \ee
If $m_{i,j}^{h}>0$, then  Lemma \ref{l:partial} yields 
\be\label{cond.optim.}\ba{rcl} -\nu(\Delta_{h} u^{h})_{i,j}+\frac{1}{q'} 
\frac{|w_{i,j}^{h}|^{q}}{(m_{i,j}^{h})^{q}} -p^{h}_{i,j}- \lambda^{h} &=& 
f(x_{i,j},m_{i,j}^{h}), \\[6pt]
		-[D_h u^{h}]_{i,j} &\in 
		&\frac{|w_{ij}^{h}|^{q-2}}{(m_{ij}^{h})^{q-1}} w_{ij}^{h}+  
		N_{K}(w_{ij}^{h}). \ea\ee
Using the last relation, if $w_{i,j}^{h}=0$ then $-[D_h u^{h}]_{i,j} \in 
N_{K}(0)= K^{-}$ and so $P_{K}(-[D_h u^{h}]_{i,j})=0$. Otherwise, we get 
\be\label{antesdedividir}|w_{i,j}^{h}|^{q-2}w_{i,j}^{h}= 
(m_{i,j}^{h})^{q-1}P_{K}(-[D_h u^{h}]_{i,j}),\ee
which is also valid for $w_{i,j}^{h}=0$.  Therefore, noting that $ 
\widehat{[D_h u^{h}]}_{i,j}=P_{K}(-[D_h u^{h}]_{i,j})$, using  
convention \eqref{convencion}, from \eqref{antesdedividir}   we deduce  
\be\label{expresionw} w_{i,j}^{h}= m_{i,j}^{h}  \left|  \widehat{[D_h 
u^{h}]}_{i,j} \right|^{\frac{2-q}{q-1}} \widehat{[D_h u^{h}]}_{i,j}\ee
and 
$$
\frac{|w_{i,j}^{h}|^{q}}{(m_{i,j}^{h})^{q}}= \left|\widehat{[D_h 
u^{h}]}_{i,j}\right|^{\frac{q}{q-1}}=  \left|\widehat{[D_h 
u^{h}]}_{i,j}\right|^{q'},
$$
which, together with the first equation in \eqref{cond.optim.},  yields the 
first equation in \eqref{mfgdiscretogeneral} with $\mu_{i,j}^{h}=0$. On the other hand, if 
$m_{i,j}^{h}=0$  then $w_{i,j}^{h}=0$ and, hence, relation \eqref{expresionw} is 
trivially satisfied (using   convention \eqref{convencion} again). Recalling the definition of $\T_{i.j}$ in \eqref{tij}, after some simple computations we deduce  that the second  equation  in \eqref{mfgdiscretogeneral} holds true in both cases 
($m_{i,j}^{h}=0$ and $m_{i,j}^{h}>0$). Now, if $m_{i,j}^h=0$, relation 
\eqref{inclusionsubdiferencial} and Lemma \ref{l:partial}   imply that
$$
-\nu(\Delta_{h}u^{h})_{i,j}+ \frac{1}{q'}| \widehat{[D_h 
u^{h}]}_{i,j}|^{q'} -p^{h}_{i,j} - \lambda^{h} \leq f(x_{i,j},0)
$$
Defining 
$$\mu_{i,j}^{h}=  
f(x_{i,j},0)+\nu(\Delta_{h}u^{h})_{i,j}-\frac{1}{q'}| 
\widehat{[D_h u^{h}]}_{i,j}|^{q'}+ p^{h}_{i,j}+\lambda^{h}$$
we get the the first equation in $(P_h)$ when $m_{i,j}^{h}=0$. Finally, by adding a constant we can always redefine $u^h$ in such a way such that $\sum_{i,j} u^{h}_{i,j}=0$.  The result  follows.  
\end{proof}\smallskip

The next result follows directly from Theorem 
\ref{condicionesoptimalidad}.  We write  the result explicitly  
only because of its analogy with the notion of weak solution in 
the  continuous case (see  \cite[Definition 4.1]{Cardagraber15} 
in the case without upper bound constraints for $m$). 

\begin{corollary}\label{casomasagennuigual0} In the case when 
$\nu=0$, for any solution $(m^{h}, w^{h})$ to $(P_h^{d})$  there 
exists $(u^h,p^{h},\lambda^h)\in \M_{h}^{2} \times \RR$ such 
that 
	\be\label{firsorderinequality1}\ba{rcl} 
	\frac{1}{q'}| \widehat{[D_h u^{h}]}_{i,j}|^{q'}-p_{i,j}^{h}-\lambda^{h} &\leq& f(x_{i,j},m_{i,j}^{h}), 
	\\[4pt]
	\T_{i,j}(u^{h},m^{h})&=&0,\\[4pt]
	\sum_{i,j} u_{i,j}^{h}=0, & \; & 0\leq m_{i,j}^{h} \leq d_{i,j}, \; \; \; h^{2} \sum_{i,j} m_{i,j}^{h}=1, \\[4pt]
	p_{i,j}^{h} \geq 0, &\;&   (d_{i,j}-m_{i,j}^{h})p_{i,j}^{h}=0.
	\ea \ee
	Similarly,  for any solution $(m^{h}, w^{h})$ to $(P_h)$  there exists $(u^h,\lambda^h)\in \M_h \times \RR$ such that
	\be\label{firsorderinequality2}\ba{rcl} 
	\frac{1}{q'}| \widehat{[D_h u^{h}]}_{i,j}|^{q'}-\lambda^{h} &\leq& f(x_{i,j},m_{i,j}^{h}), 
	\\[4pt]
	\T_{i,j}(u^{h},m^{h})&=&0,\\[4pt]
	\sum_{i,j} u_{i,j}^{h}=0, & \; & 0\leq m_{i,j}^{h}, \;  \; \; h^{2} \sum_{i,j} m_{i,j}^{h}=1.
	\ea \ee
	Moreover, in both systems, at each $i,j$ such that 
	$m_{i,j}^h>0$, we have that the first inequality is an 
	equality.
\end{corollary}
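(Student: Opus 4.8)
The plan is to read off both systems as a direct specialization of Theorem \ref{condicionesoptimalidad} to $\nu=0$, absorbing the multiplier $\mu^h$ into an inequality. First I would apply Theorem \ref{condicionesoptimalidad}(ii) to the given solution $(m^h,w^h)$ of $(P_h^d)$, obtaining multipliers $(u^h,\mu^h,p^h,\lambda^h)\in(\M_h)^3\times\RR$ satisfying \eqref{mfgdiscretogeneral}. Setting $\nu=0$ makes both discrete Laplacian terms $-\nu(\Delta_h u^h)_{i,j}$ and $-\nu(\Delta_h m^h)_{i,j}$ vanish, so the second equation collapses to $\T_{i,j}(u^h,m^h)=0$, which is exactly the second line of \eqref{firsorderinequality1}. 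The normalization $\sum_{i,j}u_{i,j}^h=0$, the feasibility $0\leq m_{i,j}^h\leq d_{i,j}$ with $h^2\sum_{i,j}m_{i,j}^h=1$, and the sign and complementarity conditions on $p^h$ carry over verbatim.

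Next I would handle the first equation of \eqref{mfgdiscretogeneral}, which at $\nu=0$ reads $\frac{1}{q'}|\widehat{[D_h u^h]}_{i,j}|^{q'}+\mu_{i,j}^h-p_{i,j}^h-\lambda^h=f(x_{i,j},m_{i,j}^h)$. Since $\mu_{i,j}^h\geq0$, moving $\mu_{i,j}^h$ to the right and discarding it yields $\frac{1}{q'}|\widehat{[D_h u^h]}_{i,j}|^{q'}-p_{i,j}^h-\lambda^h\leq f(x_{i,j},m_{i,j}^h)$, which is the first line of \eqref{firsorderinequality1}, and I keep $(u^h,p^h,\lambda^h)$ as the asserted triple. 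For the equality claim, I would invoke the complementarity relation $m_{i,j}^h\mu_{i,j}^h=0$: at any index with $m_{i,j}^h>0$ this forces $\mu_{i,j}^h=0$, so the inequality just derived is in fact an equality there.

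Finally, the system \eqref{firsorderinequality2} for $(P_h)$ follows in exactly the same way from Theorem \ref{condicionesoptimalidad}(iii), simply omitting the multiplier $p^h$ and the upper-bound condition, which are absent in $(P_h)$. I do not expect any genuine obstacle, since the argument is a purely mechanical specialization of the already-established optimality conditions; the only point requiring a moment's care is the sign bookkeeping, namely that $\mu^h\geq0$ enters additively on the left of the first equation in \eqref{mfgdiscretogeneral}, so its elimination produces a $\leq$ (and not a $\geq$) in the first line of \eqref{firsorderinequality1} and \eqref{firsorderinequality2}, in agreement with the statement.
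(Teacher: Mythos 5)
Your proposal is correct and coincides with the paper's own (implicit) argument: the authors state that the corollary ``follows directly from Theorem \ref{condicionesoptimalidad},'' and the intended derivation is precisely your mechanical specialization to $\nu=0$, dropping the Laplacian terms, absorbing the nonnegative multiplier $\mu^h$ into the inequality, and using the complementarity $m_{i,j}^h\mu_{i,j}^h=0$ to recover equality where $m_{i,j}^h>0$. The sign bookkeeping you highlight is handled correctly.
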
  


\begin{remark}
The convergence when $h\to 0$ of solutions to 
\eqref{firsorderinequality1} and \eqref{firsorderinequality2} to
solutions to the correspondent continuous systems {\rm(}if they 
exist{\rm)} is out of the scope of this paper and remains as an 
interesting problem to 
be studied.
\end{remark}

We now drop the continuity 
assumption of $f$ in 
$\TT^{n}\times [0,\infty)$ by assuming that $f: \mathbb{T}^n 
\times ]0,+\infty[ \to \RR$ is  a continuous function such that 
$\int_{0}^{m} f(x,m') \dd m' \in \RR$ for all $x\in \TT^n$ and 
$m>0$. In the following result we prove the strict positivity of 
$m^h$ when $\nu>0$.
 In particular, it provides a variational proof 
 of the existence of a solution to the discrete MFG system in 
 the case of local interactions, first proved in 
 \cite{AchdouCapuzzo10} using the Brouwer's Fixed Point Theorem.

\smallskip

\begin{corollary}\label{masaspositivas} Suppose that $\nu>0$. 
Then, every solution $(m^{h},w^{h})$ to $(P_h^{d})$ or to 
$(P_h)$ satisfies that $m^{h}_{i,j}>0$ for all $i$, $j$. 
Consequently, systems \eqref{mfgdiscretogeneral} and 
\eqref{mfgdiscretogeneralsinrestricciondedensidad} are satisfied 
with $\mu_{i,j}=0$ for all $i$, $j$.
%
\end{corollary}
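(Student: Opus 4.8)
The plan is to prove strict positivity by a discrete strong maximum principle for the continuity equation, using crucially that $\nu>0$ turns $-\nu\Delta_h$ into a genuinely elliptic operator. By Theorem \ref{condicionesoptimalidad}, every solution $(m^h,w^h)$ of $(P_h^d)$ (resp. $(P_h)$) comes equipped with multipliers satisfying \eqref{mfgdiscretogeneral} (resp. \eqref{mfgdiscretogeneralsinrestricciondedensidad}); in both systems the relevant ingredients are identical, namely the discrete continuity equation $-\nu(\Delta_h m^h)_{i,j}=\T_{i,j}(u^h,m^h)$, the sign constraints $m_{i,j}^h\ge 0$ and $\mu_{i,j}^h\ge 0$, the complementarity $m_{i,j}^h\mu_{i,j}^h=0$, and the mass constraint $h^2\sum_{i,j}m_{i,j}^h=1$. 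I will only use these, so the argument covers both problems simultaneously.

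First I would argue by contradiction, assuming the zero set $Z:=\{(i,j)\; ; \; m_{i,j}^h=0\}$ is non-empty, and fixing $(i_0,j_0)\in Z$. Evaluating the left-hand side of the continuity equation at this node, the definition \eqref{defi:discretelaplaceop} of $\Delta_h$ together with $m_{i_0,j_0}^h=0$ gives
$$-\nu(\Delta_h m^h)_{i_0,j_0}=-\frac{\nu}{h^2}\big(m_{i_0+1,j_0}^h+m_{i_0-1,j_0}^h+m_{i_0,j_0+1}^h+m_{i_0,j_0-1}^h\big)\le 0,$$
since $\nu>0$ and all entries of $m^h$ are non-negative. For the right-hand side I would inspect the definition \eqref{tij} of $\T_{i_0,j_0}$: all summands carrying the factor $m_{i_0,j_0}^h$ vanish, leaving exactly the four terms weighted by the neighbouring densities. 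Each survivor is the product of a non-negative neighbouring density, a non-negative factor of the form $|\widehat{[D_h u^h]}_{k,l}|^{\frac{2-q}{q-1}}$, and a non-negative part $(\cdot)^{+}$ or $(\cdot)^{-}$ of a finite difference, where convention \eqref{convencion} makes each product vanish whenever the associated $\widehat{[D_h u^h]}_{k,l}$ does. Hence $\T_{i_0,j_0}(u^h,m^h)\ge 0$.

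Combining the two sign estimates in $-\nu(\Delta_h m^h)_{i_0,j_0}=\T_{i_0,j_0}(u^h,m^h)$ forces both sides to vanish, so the sum of the four neighbouring densities is zero; being non-negative, each must itself be zero, i.e. all four grid neighbours of $(i_0,j_0)$ lie in $Z$. Since the grid $\TT_h^2$ is a connected torus, iterating this neighbour propagation yields $Z=\TT_h^2$, that is $m^h\equiv 0$, contradicting $h^2\sum_{i,j}m_{i,j}^h=1$. Therefore $Z=\emptyset$ and $m_{i,j}^h>0$ for every $i,j$. The concluding claim is then immediate: complementarity $m_{i,j}^h\mu_{i,j}^h=0$ with $m_{i,j}^h>0$ forces $\mu_{i,j}^h=0$ at each node, so \eqref{mfgdiscretogeneral} and \eqref{mfgdiscretogeneralsinrestricciondedensidad} hold with $\mu\equiv 0$.

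The one place that needs care will be the sign analysis of $\T_{i_0,j_0}$ at a zero of $m^h$: after cancelling the $m_{i_0,j_0}^h$-terms one must verify that the surviving finite-difference factors are precisely the non-negative parts $(\cdot)^{\pm}$ and that the possibly negative exponent $\frac{2-q}{q-1}$ (when $q>2$) is handled correctly through convention \eqref{convencion}. Everything else is the standard discrete Hopf-type propagation made available by the ellipticity of $-\nu\Delta_h$ when $\nu>0$.
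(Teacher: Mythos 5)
Your core computation is sound and, in fact, mirrors the paper's: both arguments are a discrete strong-maximum-principle propagation, in which vanishing of $m^h$ at a node forces the four neighbouring values to vanish, and connectivity of the torus grid then contradicts $h^2\sum_{i,j}m^h_{i,j}=1$. Your sign analysis of $\T_{i_0,j_0}$ is correct: once the terms carrying the factor $m^h_{i_0,j_0}$ drop out, the survivors are products of non-negative quantities, so $\T_{i_0,j_0}(u^h,m^h)\ge 0$, while $-\nu(\Delta_h m^h)_{i_0,j_0}\le 0$.

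There is, however, a genuine logical gap in the order of your argument. The corollary is stated immediately after the paper \emph{drops} the continuity of $f$ at $m=0$, assuming only that $f$ is continuous on $\TT^n\times\left]0,+\infty\right[$ (so as to cover couplings such as $f(x,m)=\log m$). In that setting Theorem \ref{condicionesoptimalidad} is not available a priori: its proof constructs $(\nabla\F^+(m^h))_{i,j}=f(x_{i,j},m^h_{i,j})$ and passes to the limit in a difference quotient using continuity of $f$ at $m^h_{i,j}$, which fails precisely when some $m^h_{i,j}=0$. Your proof invokes the optimality system \eqref{mfgdiscretogeneral}--\eqref{mfgdiscretogeneralsinrestricciondedensidad} (in particular the discrete continuity equation $-\nu(\Delta_h m^h)_{i,j}=\T_{i,j}(u^h,m^h)$) as the starting point for proving positivity, i.e.\ it uses a consequence of positivity to establish positivity. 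The paper avoids this circularity by working only with \emph{primal feasibility}: finiteness of $\B(m^h,w^h)$ forces $w^h_{i,j}=0$ wherever $m^h_{i,j}=0$ and $w^h_{i',j'}\in K$ everywhere, and the raw linear constraint $Am^h+Bw^h=0$ (not the multiplier system) then yields the same neighbour propagation; only \emph{after} positivity is established is the proof of Theorem \ref{condicionesoptimalidad} reproduced to obtain the system with $\mu\equiv 0$. To repair your argument, replace the appeal to the second equation of the optimality system by the constraint $Am^h+Bw^h=0$ together with the observation that the cost is finite at the minimizer; the sign computation you perform on $\T_{i_0,j_0}$ then becomes the corresponding sign computation on the components of $w^h\in K$, and the rest of your propagation goes through unchanged.
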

\begin{proof}  Let   $(m^{h},w^{h})$ be a solution to problem 
$(P_h^{d})$ or to  $(P_h)$  and suppose that there exists 
$(i,j)$ such that $m_{i,j}^{h}=0$. Then, since the cost function 
is finite at $(m^{h},w^{h})$, we must have that 
$w_{i,j}^{h}=0$.  Thus, the constraint $Am^h+ Bw^h=0$  implies 
that 
$$\frac{\nu}{h^{2}}\left(m_{i+1,j}^{h}+ 
m_{i-1,j}^{h}+m_{i,j+1}^{h}+m_{i,j-1}^{h}\right)=\frac{1}{h}\left(-(w^{h})_{i-1,j}^{1}
   + (w^{h})_{i+1,j}^{2} - 
(w^{h})_{i,j-1}^{3}+(w^{h})_{i,j+1}^{4}  \right).$$
Using again that the cost is finite at $(m^h, w^h)$, we must 
have that $w^{h}_{i',j'}\in K$ for all $(i',j')$, which implies 
that the right hand side in the above equation is non-positive. 
Since $m^h\geq 0$, we deduce 
$$0=m_{i+1,j}^{h}= m_{i-1,j}^{h}=m_{i,j+1}^{h}=m_{i,j-1}^{h}.$$
Reasoning recursively, we obtain $m^{h}=0$, 
contradicting that $h^{2}\sum_{i,j}m_{i,j}^{h}=1$. Therefore, we 
deduce that $m^h$ is strictly positive, and since $f$ is 
continuous in $\TT^{n}\times 
]0,+\infty[$, we obtain $(\nabla 
\F^{+}(m^h))_{i,j}=f(x_{i,j},m_{i,j}^{j})\in \RR$ for all $i,j$ 
and the proof in Theorem  \ref{condicionesoptimalidad}  can be 
reproduced analogously.
\end{proof}\smallskip

In general, if $\nu=0$ we cannot ensure the strict positivity of 
$m^{h}$ in any solution $(m^{h},w^{h})$ to $(P_h^{d})$ or to 
$(P_h)$. However, it is possible to 
obtain it if $f$ satisfies
\be\label{derivadamenosinfty}\lim_{m'\downarrow 0} 
f(x,m')=-\infty \hspace{0.8cm} \forall  \; x\in \TT^d,\ee
which is satisfied, for example if $F(x,m)=m\log m 
+ mF^1(x_{i,j})$ with $F^1$ continuous in $\TT^2$.

\begin{proposition}\label{derivadainfty} Suppose that $\nu=0$ 
and that \eqref{derivadamenosinfty} holds. Then, for every 
solution 
$(m^h,w^h)$ to  $(P_h^{d})$ or $(P_h)$, we have $m_{i,j}^h>0$ for all 
$i$,$j$. Consequently, the conclusion of Corollary 
\ref{casomasagennuigual0} holds and we have  the equality for 
the first 
equations in \eqref{firsorderinequality1} and 
\eqref{firsorderinequality2}.
\end{proposition}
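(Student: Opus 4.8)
The plan is to argue by contradiction, exploiting that the blow-up \eqref{derivadamenosinfty} of $f$ at the origin makes it arbitrarily profitable to inject a small amount of mass into an empty cell. Suppose $(m^h,w^h)$ solves $(P_h^{d})$ (the case of $(P_h)$ is identical and slightly simpler, since the constraint $m\le d$ is then absent) and that $m^h_{i_0,j_0}=0$ for some cell $(i_0,j_0)$. Because the cost is finite at $(m^h,w^h)$, the definition \eqref{definicionhatb} of $\hat b$ forces $w^h_{i_0,j_0}=0$; and since $h^2\sum_{i,j}m^h_{i,j}=1$, there is at least one cell $(i_1,j_1)$ with $m^h_{i_1,j_1}>0$.

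The key simplification is that, for $\nu=0$, one has $A=0$, so the equality constraint $G(m,w)=(0,1)$ reduces to $Bw=0$ together with $h^2\sum_{i,j}m_{i,j}=1$. In particular the divergence constraint involves $w$ only and is insensitive to any change in $m$. I therefore freeze $w$ and redistribute mass in the $m$ variable alone: for small $\varepsilon>0$ I set $w^\varepsilon:=w^h$ and let $m^\varepsilon$ coincide with $m^h$ except that $m^\varepsilon_{i_0,j_0}:=\varepsilon$ and $m^\varepsilon_{i_1,j_1}:=m^h_{i_1,j_1}-\varepsilon$. By construction $Bw^\varepsilon=Bw^h=0$ and $h^2\sum_{i,j}m^\varepsilon_{i,j}=1$, and for $\varepsilon$ small enough one has $0<\varepsilon\le d_{i_0,j_0}$ and $0<m^h_{i_1,j_1}-\varepsilon\le d_{i_1,j_1}$, so $(m^\varepsilon,w^\varepsilon)$ is feasible for $(P_h^{d})$.

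Next I estimate the variation of the objective $\B+\F$, which is separable and hence changes only at the two modified cells. At $(i_0,j_0)$ the term $\hat b$ stays equal to $0$ (because $w^\varepsilon_{i_0,j_0}=0$), while the coupling contributes $F(x_{i_0,j_0},\varepsilon)-F(x_{i_0,j_0},0)=\int_0^\varepsilon f(x_{i_0,j_0},m')\,\mathrm dm'$; by \eqref{derivadamenosinfty}, for every $M>0$ this integral is bounded above by $-M\varepsilon$ once $\varepsilon$ is small enough. At $(i_1,j_1)$, since $m^h_{i_1,j_1}>0$, both $s\mapsto\hat b(s,w^h_{i_1,j_1})$ and $s\mapsto F(x_{i_1,j_1},s)$ are differentiable near $m^h_{i_1,j_1}$, so the change they produce is $O(\varepsilon)$, say bounded in absolute value by $C\varepsilon$ with $C$ independent of $\varepsilon$. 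Hence the total variation of $\B+\F$ is at most $(C-M)\varepsilon$, which is strictly negative as soon as $M>C$, contradicting the optimality of $(m^h,w^h)$. This proves $m^h_{i,j}>0$ for every $(i,j)$.

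Finally, once strict positivity is established, $\nabla\F^{+}$ is finite at $m^h$ (because $f$ is continuous on $\TT^{2}\times\,]0,+\infty[$ and no cell is empty), so the derivation of the optimality system in Theorem \ref{condicionesoptimalidad}, specialized to $\nu=0$, can be repeated verbatim; equivalently, Corollary \ref{casomasagennuigual0} applies and, since every $m^h_{i,j}>0$, the first inequalities in \eqref{firsorderinequality1} and \eqref{firsorderinequality2} hold as equalities at every cell. The only genuinely delicate point is the cost estimate of the third paragraph: one must ensure that the single profitable cell $(i_0,j_0)$, whose marginal gain diverges because $f(x_{i_0,j_0},\varepsilon)\to-\infty$, dominates the bounded first-order cost incurred at the compensating cell $(i_1,j_1)$. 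This is exactly what the blow-up \eqref{derivadamenosinfty} guarantees after dividing by $\varepsilon$ and letting $\varepsilon\downarrow0$.
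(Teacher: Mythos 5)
Your proof is correct and follows essentially the same route as the paper: assume a cell with $m^h_{i_0,j_0}=0$ (forcing $w^h_{i_0,j_0}=0$), transfer a small mass $\varepsilon$ from a positive cell, observe that the donor cell's cost changes by $O(\varepsilon)$ while the receiving cell's coupling term gains $F(x_{i_0,j_0},\varepsilon)-F(x_{i_0,j_0},0)=f(x_{i_0,j_0},\hat\delta)\varepsilon\to-\infty$ relative to $\varepsilon$ by \eqref{derivadamenosinfty}, contradicting optimality, and then rerun the argument of Theorem \ref{condicionesoptimalidad}. Your explicit remark that $A=0$ when $\nu=0$, so the PDE constraint is insensitive to the mass redistribution, is a point the paper leaves implicit but is exactly what makes the perturbation feasible.
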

\begin{proof} Since the argument is the same for both problems, 
we consider only problem $(P_h)$. Suppose the existence of 
$(i,j)$ such that $m^{h}_{i,j}=0$. Then, since the cost function 
is finite at $(m^h, w^h)$, we must have that  $w^{h}_{i,j}=0$ 
and, by feasibility, there exists $(i',j')$ such that 
$m_{i',j'}^h>0$. For any
$0<\delta<m_{i',j'}^h$,  define $\hat{m}$ by 
$\hat{m}_{i,j}=\delta$, 
$\hat{m}_{i',j'}=m^{h}_{i',j'}-\delta$ and $\hat{m}_{i'',j''}= 
m_{i'',j''}^{h}$ for all $(i'',j'')\notin \{(i,j), (i',j')\}$. 
Clearly, $(\hat{m}, w^h)$ is feasible for problem $(P_h)$ and 
the difference of the cost function for    $(\hat{m}, w^h)$ and 
$(m^h, w^h)$ is given by \small
\be\label{diferenciadecostos}
\hat{b}(m^{h}_{i',j'}-\delta, w^{h}_{i',j'})- \hat{b}(m^{h}_{i',j'}, w^{h}_{i',j'}) + F(x_{i',j'},m^{h}_{i',j'}-\delta)- F(x_{i',j'},m^{h}_{i',j'})+ F(x_{i,j},\delta)- F(x_{i,j},0).\ee \normalsize
From the Mean Value Theorem we have $F(x_{i,j},\delta)- 
F(x_{i,j},0)= f(x_{i,j}, \hat{\delta})\delta$, for some 
$\hat{\delta}\in (0, \delta)$, and since the first two 
differences in \eqref{diferenciadecostos} are of order 
$O(\delta)$, we get that the expression in 
\eqref{diferenciadecostos} is strictly negative if $\delta$ is 
small enough. This contradicts the optimality of $(m^h, w^h)$.  
Consequently, since $m^h>0$, we have $(\nabla 
\F^{+}(m^h))_{i,j}=f(x_{i,j},m_{i,j}^{j})\in \RR$ for all $i,j$, 
and we can reproduce the proof in Theorem  
\ref{condicionesoptimalidad} to establish 
\eqref{firsorderinequality2} with $\mu_{i,j}=0$ for all $i,j$.
\end{proof}
\subsection{The dual of the discrete problem} Throughout the rest of the paper we 
assume that 
$F(x_{i,j},\cdot)$ is 
convex for all $i$, $j$ (equivalently, 
$f(x_{i,j},\cdot)$ is increasing for all $i$, $j$). In this case, we derive the dual 
problem associated to $(P_h^{d})$ and 
$(P_h)$. 
Using the notation \eqref{definicionfuncompletas}, we must first 
calculate $(\B+ \F)^{*}$. Clearly, for $(\alpha, \beta) \in \M_h \times \W_h$ we have
$$(\B+ \F)^{*}(\alpha,\beta)= \sum_{i,j}(\hat{b}+ 
F(x_{i,j},\cdot))^{*}(\alpha_{i,j},\beta_{i,j}).  $$
By chosing $(\tilde{m}, \tilde{w})$ as in the proof of Theorem 
\ref{condicionesoptimalidad} and applying \cite[Theorem 
9.4.1]{attouch}, we have, for every $i,j$, 
$$(\hat{b}+ 
F(x_{i,j},\cdot))^{*}(\alpha_{i,j},\beta_{i,j})=\inf_{(\alpha',\beta')\in
\RR\times \RR^{4}}\left\{ \hat{b}^{\ast}(\alpha_{i,j}-\alpha', 
\beta_{i,j}-\beta')+ 
F^{\ast}(x_{i,j},\alpha',\beta')\right\}, $$
where $F(x_{i,j},\cdot)$ is seen as a function of $(m,w)$, constant 
in $w$. It is easy to check that 
$F^{*}(x_{i,j},a,b)=F^{*}(x_{i,j},a)$ if $b=0$ and  
$F^{*}(x_{i,j},a,b)=+\infty$ otherwise. Thus, by using Lemma 
\ref{l:partial} we obtain
$$\ba{rcl} (\hat{b}+ 
F(x_{i,j},\cdot))^{*}(\alpha_{i,j},\beta_{i,j})&=&\inf_{\alpha' 
\in 
\RR}\left\{F^{\ast}(x_{i,j},\alpha') \; ; \; \alpha_{i,j} + 
\frac{1}{q'}|P_{K}(\beta_{i,j})|^{q'} \leq 
\alpha'\right\},\\[4pt]
\; &=& F^{\ast}\left(x_{i,j},\alpha_{i,j} + 
\frac{1}{q'}|P_{K}(\beta_{i,j})|^{q'}\right), \ea$$
where in the last equality we have used that $F^{\ast}(x_{i,j}, 
\cdot)$ is increasing. Let us define $\Xi: \M_{h} \times \W_{h} \to \M_{h}\times \RR \times \M_{h}$ as  $\Xi(m,w)= \left(G(m,w), m\right)$.  Using that 
$\Xi^{*}(u,\lambda,p)=(\nu A^{*}u+h^{2}\lambda{\bf 
1}_{\M_h}+p, 
B^{*}u)$ 
we get that the Fenchel-Rockafellar dual problem 
\cite[Definition~15.19]{BausCombettes11} is 
given by
$$\ba{l} \inf
\left\{\lambda+\sup_{m\in \D} \sum_{i,j} p_{i,j}m_{i,j}+ 
\sum_{i,j} F^{\ast}\left(x_{i,j}, (-\nu A^{*}u)_{i,j} 
+\frac{1}{q'}|P_{K}(-(B^{*}u)_{i,j})|^{q'} -
p_{i,j}- \lambda 
h^2  \right)\right\}\\[6pt]
=\inf
\left\{\lambda+\sup_{m\in \D} \sum_{i,j} p_{i,j}m_{i,j}+ 
\sum_{i,j} F^{\ast}\left(x_{i,j}, (\nu A^{*}u)_{i,j} 
+\frac{1}{q'}|P_{K}((B^{*}u)_{i,j})|^{q'} -p_{i,j}- \lambda 
h^2  \right)\right\},
\ea$$ \normalsize
where the infimum  is taken over all $(u,\lambda,p)\in \M_h \times \RR \times \M_{h}$. Using that 
$$\sup_{m\in \D} \sum_{i,j} p_{i,j}m_{i,j}= \left\{\ba{ll} \sum_{i,j} p_{i,j}d_{i,j} & \mbox{if } p\geq 0, \\[4pt]
									     +\infty & \mbox{otherwise,}\ea\right.$$
we get that the dual problem is given by (compare with \cite[Proposition 4.5]{Mészáros2015} in the continuous framework)
\be\label{problemadualconrestricciondecaja}
\inf
\left\{\lambda+\sum_{i,j} p_{i,j}d_{i,j}+ \sum_{i,j} F^{\ast}\left(x_{i,j}, -\nu(\Delta_h 
u)_{i,j} +\frac{1}{q'}| \widehat{[D_h u ]}_{i,j}|^{q'} 
- p_{i,j} - \lambda 
h^2  \right)\right\}
\ee
where the infimum is taken over all $(u,\lambda,p)\in \M_h 
\times \RR \times \M_{h}$ satisfying that that $p\geq 0$.  It 
follows from Lemma~\ref{separacionsubdiferencial} and classical 
results in finite-dimensional convex duality theory (see e.g. 
\cite{MR0274683}) that the dual problem has at least one 
solution $(u,\lambda,p)$ and that the optimal value of $(P_h)$ 
equals minus the value in 
\eqref{problemadualconrestricciondecaja} (no duality gap).

If we do not consider box constraints (i.e. $d_{i,j}=+\infty$ for all $i$, $j$), analogous computations yield  that the dual problem is given by 
\be\label{problemadual} \min_{(u,\lambda)\in \M_h \times \RR} 
\left\{\lambda+\sum_{i,j} F^{\ast}\left(x_{i,j}, -\nu(\Delta_h 
u)_{i,j} +\frac{1}{q'}| \widehat{[D_h u ]}_{i,j}|^{q'}  - \lambda 
h^2  \right)\right\}\ee
and that this problem admits at least one solution $(u,\lambda)$.

In the convex case the results in Theorem  
\ref{condicionesoptimalidad} can be retrieved from this dual 
formulation using that the 
primal and dual problems admit solutions and that there is no 
duality gap (see \cite{Mészáros2015} for this type of argument 
in the continuous case and \cite{AchdouCapuzzoCamilli10} in the 
context of the discretization of the so-called planning problem 
in MFG).

\section{Iterative algorithms for solving $(P_h^{d})$ and 
$(P_h)$}\label{als}
In this section we review some proximal splitting methods for 
solving optimization problems and we provide their application to 
$(P_h^{d})$ and $(P_h)$. We also obtain a new splitting method which 
avoid matrix inversions. From 
now on, we assume that $f$ is increasing with respect to the 
second variable. Hence, the objective functions 
of these problems are convex and non-smooth, which lead us to 
focus in methods performing implicit instead of gradient steps.
 The performance of these splitting algorithms rely on the efficiency 
 on the computation of the implicit steps, in which the proximity 
 operator arises naturally. Let us recall that, for any convex 
 l.s.c. function
$\varphi\colon\RR^N\to\RX$ (eventually non-smooth), and $x\in\RR^N$, 
there exists a unique 
solution to 
\begin{equation}
\minimize{y\in\RR^N}{\varphi(y)+\frac{1}{2}|y-x|^2},
\end{equation}
which is denoted by $\prox_{\varphi}x$. The {\em proximity operator}, denoted by $\prox_{\varphi}$,
associates $\prox_{\varphi}x$ to each $x\in\RR^N$. From classical convex analysis we have
\begin{equation}
\label{e:prox}
p=\prox_{\varphi}x\quad\Leftrightarrow\quad 
x-p\in\partial\varphi(p),
\end{equation}
where $\partial\varphi$ stands for the subdifferential operator of 
$\varphi$ defined in \eqref{subdiferencialdef}.

\subsection{Proximal splitting algorithms}
\label{sec:5.meth}
For understanding the 
meaning of a proximal step, let $\gamma>0$, $x_0\in\RR^N$, suppose 
that $\varphi$ is differentiable, and
consider the {\em proximal point 
algorithm} \cite{Martinet70, Rock76}
\begin{equation}
\label{e:ppa}
(\forall n\geq 0)\quad x_{n+1}=\prox_{\gamma\varphi}x_n.
\end{equation}
 In this case, it follows from \eqref{e:prox} that 
\eqref{e:ppa} is equivalent to
\begin{equation}
\frac{x_{n}-x_{n+1}}{\gamma}=\nabla\varphi(x_{n+1}),
\end{equation}
which is an implicit discretization of the gradient flow. Then, the 
proximal iteration can be seen as an implicit step, which can be 
efficiently computed in several cases (see e.g., 
\cite{CombettesWajs}). 
The sequence generated by this algorithm (even in the non-smooth 
convex case) converges to a 
minimizer of $\varphi$  whenever it exists. 
However, since our problem involves constraints, it is natural that 
the methods for solving $(P_h)$ or $(P_h^{d})$ should 
involve them and, therefore, are more complicated.

Let us start with a general setting.
Let $\varphi\colon\RR^N\to\RX$ and $\psi\colon\RR^M\to\RX$ be two 
convex 
l.s.c. proper functions, and let 
$\Xi\colon\RR^N\to\RR^M$ a linear operator ($M\times N$ real 
matrix). Consider the optimization problem 
\begin{equation}
\label{e:primal}
\minimize{y\in\RR^N}{\varphi(y)+\psi(\Xi y)}
\end{equation}
and the associated Fenchel-Rockafellar dual problem
\begin{equation}
\label{e:dual}
\minimize{\sigma\in\RR^M}{\psi^*(\sigma)+\varphi^*(-\Xi^{*} 
\sigma)}.
\end{equation}
We have that 
\eqref{e:primal} and \eqref{e:dual} can be equivalently 
formulated as 
\begin{equation}
\label{e:primalADMM}
\min_{y\in\RR^N\!\!,\,v\in\RR^M}\varphi(y)+\psi(v)\quad 
\text{s.t.}\quad \Xi y=v
\end{equation}
and 
\begin{equation}
\label{e:dualADMM}
\min_{z\in\RR^N\!\!,\,\sigma\in\RR^M}\psi^*(\sigma)+\varphi^*(z)\quad
 \text{s.t.}\quad  -\Xi^{*}\sigma=z,
\end{equation}
respectively. Moreover, under qualification conditions (satisfied in our setting),
any primal-dual solution $(\hat{y},\hat{\sigma})$ to 
\eqref{e:primal}-\eqref{e:dual} satisfies, for every $\gamma>0$ and 
$\tau>0$,
\begin{equation}
\label{e:caract}
\begin{cases}
-\Xi^{*}\hat{\sigma}\in\partial\varphi(\hat{y})\\
\Xi\hat{y}\in\partial\psi^*(\hat{\sigma})
\end{cases}\Leftrightarrow\quad
\begin{cases}
\hat{y}-\tau\Xi^{*}\hat{\sigma}\in\tau\partial\varphi(\hat{y})+\hat{y}\\
\hat{\sigma}+\gamma\Xi\hat{y}\in\gamma\partial\psi^*(\hat{\sigma})+\hat{\sigma}
\end{cases}\Leftrightarrow\quad
\begin{cases}
\prox_{\tau\varphi}(\hat{y}-\tau\Xi^{*}\hat{\sigma})=\hat{y}\\
\prox_{\gamma\psi^*}(\hat{\sigma}+\gamma\Xi\hat{y})=\hat{\sigma}.
\end{cases}
\end{equation}

In the particular case when
\be\label{def.phi}
\varphi\colon(m,w)\mapsto\sum_{i,j=1}^{N_h}\hat{b}(m_{i,j},w_{i,j})
+F(x_{i,j},m_{i,j})+\iota_{\mathcal{D}}(m_{i,j}),\ee
 $(P_h^d)$ can be recast  as 
\eqref{e:primal} via two formulations.
\begin{itemize}
\item[\textbullet] {\it Without splitting.} We consider $N=M=5\times(N_h\times N_h)$,  $\Xi=\Id$, and
\be\label{e:defV}
\psi=\iota_{V}, \; \; \mbox{with } \; V=\menge{(m,w)\in \RR^{N}}{G(m,w)=(0,1)}= ({\bf 
1}_{\M_h},0)+\mbox{ker } G,\ee
where we recall that $G$ is defined in \eqref{defGyD}.
\item[\textbullet] {\it With splitting.}  We split the 
influence of linear operators from $\psi$ by considering  $N=5 
N_h^2$, $M=N_h^2+1$, $\psi=\iota_{\{(0,1)\}}$  and $\Xi=G$.
\end{itemize}
In the latter case, the dual problem \eqref{e:dual} reduces to  \eqref{problemadual}.  In the next section, we will see that the two formulations lead to different algorithms.  
In the rest of this section we recall some classical algorithms 
to solve \eqref{e:primal}.   For the sake simplicity, we specify 
the computation of the steps of each algorithm under the 
formulation {\it without splitting} for problem $(P_h^d)$. 
\subsubsection{Alternating direction method of multipliers (ADMM)}\label{admm}
In this part we briefly recall the ADMM 
\cite{MR0388811,GABAY197617,Gabay1983299}, which is a variation of 
the Augmented Lagrangian Algorithm introduced in 
\cite{MR0262903,MR0271809,MR0272403} (see references 
\cite{Boyd:2011:DOS:2185815.2185816,Ecksteinphd} for two surveys on 
the subject). The algorithm can be seen as an application of Douglas-Rachford splitting to \eqref{e:dual} \cite{Gabay1983299,EcksteinBertsekas92}. Problem \eqref{e:primalADMM} can be written 
equivalently as 
\begin{equation}
\label{e:primalADMM2}
\min_{y\in\RR^N\!\!,\,v\in\RR^M}\max_{\sigma\in\RR^M}L(y,v,\sigma),
\end{equation}
where $L: 
\RR^{N}\times \RR^{M} \times \RR^{M} \mapsto ]-\infty, +\infty]$ 
is the Lagrangian associated to \eqref{e:primalADMM},  
defined by
\begin{equation}
\label{e:defLagrangian}
L(y,v,\sigma)= \varphi(y) + \psi(v) + \sigma \cdot (\Xi 
y-v).
\end{equation}
Given $\gamma>0$, the {\it augmented Lagrangian} 
$L_{\gamma}: 
\RR^{N}\times \RR^{M} \times \RR^{M} \mapsto ]-\infty, +\infty]$ is 
defined by
\begin{equation}
L_{\gamma}(y,v,\sigma)= L(y,v,\sigma)+ \frac{\gamma}{2}|\Xi 
y-v|^{2}.
\end{equation}
Given an initial point $(y^{0},v^{0},\sigma^{0})$, the iterates of 
ADMM are obtained by the following procedure: for every 
$k\geq0$, 
\be\label{pasoslagrangianoaumentado}
\ba{rcl} y^{k+1}&=& 
\mbox{argmin}_{y}L_{\gamma}(y,v^k,\sigma^k)=\mbox{argmin}_{y}\left\{ 
\varphi(y)+ 
\sigma^{k}\cdot \Xi 
y + \frac{\gamma}{2}|\Xi y-v^{k}|^{2}\right\}\\[6pt]
v^{k+1}&=& 
\mbox{argmin}_{v}L_{\gamma}(y^{k+1},v,\sigma^k)=\prox_{\psi/\gamma}(\sigma^k/\gamma+\Xi
y^{k+1})\\[6pt]
\sigma^{k+1}&=&\mbox{argmax}_{\sigma}\left\{L(y^{k+1},v^{k+1},\sigma)
-\frac{1}{2\gamma}|\sigma-\sigma^k|^2\right\}=
 \sigma^{k}+ \gamma(\Xi y^{k+1}-v^{k+1}).
\ea\ee
This algorithm is simple to implement in the case when $\varphi$ 
is a 
quadratic function, in which case the first step in 
\eqref{pasoslagrangianoaumentado} reduces 
to solve a linear system. This is the case in several problems in PDE's,
where this method is widely used. However, for general convex functions 
$\varphi$, the first step in \eqref{pasoslagrangianoaumentado} is not 
always easy to compute. 
Indeed, it has not closed expression for most of combinations of 
convex functions $\varphi$ and matrices $\Xi$, even if 
$\prox_{\varphi}$ is computable, which leads to subiterations in 
those cases. Moreover, it needs a full {\color{red}column}-rank assumption on $\Xi$ for 
achieving convergence. However, in 
some particular cases, it can be solved 
efficiently. For instance, assume that 
\begin{equation}
(\forall i,j\in\{1,\ldots, N_h\})\quad 	F(x_{i,j},m)=\begin{cases}
	g_{i,j}(m),\quad&\text{if}\quad m\geq 0\\
	+\infty,&\text{otherwise},
	\end{cases}
\end{equation}
where $g_{i,j}\colon\RR\to\RR$ is a differentiable strictly 
convex
function satisfying $\lim_{m\to+\infty}g_{i,j}'(m)=+\infty$.
By recalling that 
$\widehat{[D_h u 
]}_{i,j}=P_{K}(-[D_hu]_{i,j})$ and that 
$(B^{*}u)_{i,j}=-[D_hu]_{i,j}$, the dual problem 
\eqref{problemadual} (for $q=2$) 
reduces to \eqref{e:primal} by choosing $N=N_h^2+1$,
$M=6N_h^2$, 
${\bf 
1}_{\M_h}\in\mathcal{M}_h$,
 

\begin{equation}\label{definicionfuncionesADMM}
\begin{cases}
y=(u,\lambda) \in \RR^N, &\quad  \varphi(y)= \lambda,\quad 
\Xi 
y=(-h^{2}\lambda{\bf 
1}_{\M_h},B^{*}u,\nu A^{*}u)\\
v=(a,b,c) \in \RR^M,&\quad \psi(v)=\sum_{i,j}\phi_{i,j}( 
a_{i,j}+\frac{1}{2}|P_{K}b_{i,j}|^{2}+c_{i,j}),
\end{cases}
\end{equation}
where, for every $i,j\in\{1,\ldots, N_h\}$,
\begin{equation}
	\label{e:defphi}
	\phi_{i,j}(\eta):=(F(x_{i,j},\cdot))^*(\eta)
	=\begin{cases}
	g_{i,j}^*(\eta),\quad&\text{if }\eta > g_{i,j}'(0);\\
	-g_{i,j}(0),&\text{otherwise}.
	\end{cases}
\end{equation}
Note that the assumptions on $g_{i,j}$ imply that $g_{i,j}^*$ is 
a strictly convex differentiable function on ${\rm int}\dom 
g_{i,j}^*\supset]g_{i,j}'(0),+\infty[$ (see, e.g., 
\cite[Theorem~26.3]{MR0274683}), 
 and, hence, $\phi_{i,j}$ is non-decreasing, convex, and 
 differentiable. 
Therefore, by using first order optimality conditions, the 
steps in 
\eqref{pasoslagrangianoaumentado} reduce to
\begin{align}
y^{k+1}&=
\begin{pmatrix}
u^{k+1}\\
\lambda^{k+1}
\end{pmatrix}=
\begin{pmatrix}
(\nu^2AA^{*}+BB^{*})^{-1}\Big(B(b^k-\sigma^k_2/\gamma)+\nu 
A(c^k-\sigma^k_3/\gamma)\Big)\\
\frac{1}{\gamma }\sum_{i,j}\big(\sigma_{1,i,j}^k-1-\gamma 
a_{i,j}^k\big)
\end{pmatrix}\\
v^{k+1}&=\begin{pmatrix}
a^{k+1}\\
b^{k+1}\\
c^{k+1}
\end{pmatrix}=\prox_{\psi/\gamma}
\begin{pmatrix}
\sigma_1^{k}/\gamma-h^2\lambda^{k+1}{\bf 
1}_{\M_h}\\
\sigma_2^{k}/\gamma+B^{*}u^{k+1}\\
\sigma_3^{k}/\gamma+\nu A^{*}u^{k+1}
\end{pmatrix}
\label{e:proxadmm}\\
\sigma^{k+1}&=\begin{pmatrix}
\sigma_1^{k+1}\\
\sigma_2^{k+1}\\
\sigma_3^{k+1}
\end{pmatrix}=
\begin{pmatrix}
\sigma_1^{k}-\gamma (h^2\lambda^{k+1}{\bf 
1}_{\M_h}+a^{k+1})\\
\sigma_2^{k}+\gamma(B^{*}u^{k+1}-b^{k+1})\\
\sigma_3^{k}+\gamma(\nu A^{*}u^{k+1}-c^{k+1})
\end{pmatrix}.
\end{align}
The more difficult 
step for 
ADMM 
is \eqref{e:proxadmm}, whose explicit 
calculation is the next result. 
\begin{lemma}
Let $\gamma>0$. We have $\prox_{\psi/\gamma}\colon (a,b,c)\mapsto 
(\prox_{\psi_{i,j}/\gamma}(a_{i,j},b_{i,j},c_{i,j}))_{i,j}$, where
\begin{equation}
\label{e:proxADMMpsi}
\prox_{\psi_{i,j}/\gamma}(\alpha_0,\beta_0,\delta_0)=
\begin{cases}
\begin{pmatrix}
\alpha_0-s_{i,j}\\
\frac{P_K\beta_0}{1+s_{i,j}}+P_{K^-}\beta_0\\
\delta_0-s_{i,j}
\end{pmatrix},
\quad&\text{{\rm if} }\alpha_0+|P_K\beta_0|^2/2+\delta_0> 
g_{i,j}'(0);\\
(\alpha_0,\beta_0,\delta_0),&\text{{\rm otherwise}},
\end{cases}
\end{equation}
where $s_{i,j}$ is the unique non-negative solution to the 
equation on $s$:
\begin{equation}
\label{e:NLequation}
\gamma s=(g_{i,j}^*)'\Big(\alpha_0+\delta_0-2s
+\frac{|P_K\beta_0|^2}{2(1+s)^2}\Big).
\end{equation}
\end{lemma}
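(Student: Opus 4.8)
The plan is to exploit the block‑separable structure of $\psi$ and then reduce each block to a single scalar equation via the optimality characterization \eqref{e:prox}. First I would note that $\psi(v)=\sum_{i,j}\psi_{i,j}(v_{i,j})$, where $\psi_{i,j}(\alpha,\beta,\delta)=\phi_{i,j}(\alpha+\frac12|P_K\beta|^2+\delta)$ depends only on the block $v_{i,j}=(a_{i,j},b_{i,j},c_{i,j})\in\RR\times\RR^4\times\RR$. Because the minimization defining $\prox_{\psi/\gamma}$ decouples over these disjoint blocks, $\prox_{\psi/\gamma}$ acts blockwise; this gives the first assertion and lets me fix one index, drop it, and write $\phi=\phi_{i,j}$, $g=g_{i,j}$.

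For a single block I would compute $p=(\alpha,\beta,\delta)=\prox_{\psi_{i,j}/\gamma}(\alpha_0,\beta_0,\delta_0)$ from \eqref{e:prox}. Two structural facts drive the computation. First, $\phi$ is convex, nondecreasing and $C^1$, with $\phi'(\eta)=(g^*)'(\eta)>0$ for $\eta>g'(0)$ and $\phi'(\eta)=0$ for $\eta\le g'(0)$; this uses that $g$ is strictly convex and differentiable with $g'(+\infty)=+\infty$, so that $(g^*)'=(g')^{-1}$ vanishes at $g'(0)$ and $\phi$ glues smoothly to its flat part at $\eta=g'(0)$. Second, the inner map $h(\alpha,\beta,\delta):=\alpha+\frac12|P_K\beta|^2+\delta$ is convex and $C^1$ with $\nabla h=(1,P_K\beta,1)$: indeed $\frac12|P_K\beta|^2$ equals $\frac12$ times the squared distance from $\beta$ to $K^-$ (by Moreau's conical decomposition $\beta=P_K\beta+P_{K^-}\beta$ into orthogonal summands), whose gradient is $\beta-P_{K^-}\beta=P_K\beta$. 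Hence $\psi_{i,j}=\phi\circ h$ is $C^1$ and \eqref{e:prox} reads $\gamma\big((\alpha_0,\beta_0,\delta_0)-p\big)=\phi'(h(p))\,(1,P_K\beta,1)$.

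Writing $s:=\frac1\gamma\phi'(h(p))\ge0$, this splits into $\alpha=\alpha_0-s$, $\delta=\delta_0-s$ and $\beta+sP_K\beta=\beta_0$. I would solve the last equation by decomposing $\beta=P_K\beta+P_{K^-}\beta$, giving $\beta_0=(1+s)P_K\beta+P_{K^-}\beta$; since $(1+s)P_K\beta\in K$ and $P_{K^-}\beta\in K^-$ stay orthogonal, uniqueness of Moreau's decomposition forces $P_K\beta=\frac{1}{1+s}P_K\beta_0$ and $P_{K^-}\beta=P_{K^-}\beta_0$, i.e. $\beta=\frac{1}{1+s}P_K\beta_0+P_{K^-}\beta_0$. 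Substituting $\alpha,\delta$ and $|P_K\beta|^2=|P_K\beta_0|^2/(1+s)^2$ into $\gamma s=\phi'(h(p))$ yields $\gamma s=\phi'\big(\alpha_0+\delta_0-2s+|P_K\beta_0|^2/(2(1+s)^2)\big)$, which becomes \eqref{e:NLequation} once we are in the regime where $\phi'=(g^*)'$.

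The only delicate point, which I expect to be the main obstacle, is settling the dichotomy together with existence and uniqueness of $s$. I would analyze $\Psi(s):=\gamma s-\phi'(r(s))$ on $[0,+\infty[$, where $r(s):=\alpha_0+\delta_0-2s+|P_K\beta_0|^2/(2(1+s)^2)$ is strictly decreasing. As $\phi'$ is nonnegative and nondecreasing, $\phi'\circ r$ is nonincreasing, so $\Psi$ is strictly increasing from $\Psi(0)=-\phi'(r(0))\le0$ to $+\infty$, giving a unique root $s\ge0$. Since $r(0)=\alpha_0+\frac12|P_K\beta_0|^2+\delta_0$, this root is positive exactly when $\phi'(r(0))>0$, i.e. when $r(0)>g'(0)$: this is the first branch of the formula, and there $r(s)=h(p)>g'(0)$ justifies writing \eqref{e:NLequation} with $(g^*)'$. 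When $r(0)\le g'(0)$ one gets $s=0$, hence $p=(\alpha_0,\beta_0,\delta_0)$, which is the identity branch. This reproduces the claimed expression for $\prox_{\psi_{i,j}/\gamma}$ and completes the plan.
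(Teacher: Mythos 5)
Your proposal is correct and follows essentially the same route as the paper: blockwise separability of $\psi$, the characterization \eqref{e:prox} with $\nabla\psi_{i,j}=\phi_{i,j}'(h)\,(1,P_K\beta,1)$, the substitution $s=\phi_{i,j}'(h(p))/\gamma$, and the resulting component formulas. The only (harmless) differences are that you resolve $\beta_0=(1+s)P_K\beta+P_{K^-}\beta$ via uniqueness of the Moreau decomposition rather than componentwise, and you incorporate the monotonicity analysis of $s\mapsto\gamma s-\phi_{i,j}'(r(s))$ directly into the proof, whereas the paper defers the existence and uniqueness of the root of \eqref{e:NLequation} to the remark that follows the lemma.
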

\begin{proof} We adapt the argument in 
\cite[Appendix]{benamoucarlier15} for considering the more 
general functions $g_{i,j}$ and the  
presence of the set $K$ in the definition of $\psi$ in 
\eqref{definicionfuncionesADMM}. 
Since 
$\psi=\sum_{i,j}\psi_{i,j}$ is separable, we have from 
\cite[Proposition~23.30]{BausCom11}
that $\prox_{\psi/\gamma}\colon 
(a,b,c)\mapsto(\prox_{\psi_{i,j}/\gamma}(a_{i,j},b_{i,j},c_{i,j}))_{i,j}$,
 where
$\psi_{i,j}(\alpha,\beta,\delta):=\phi_{i,j}(\alpha+\frac{1}{2}|P_{K}\beta|^2+\delta)$.
Using that 
$\nabla(|P_K\beta|^2/2)=P_K\beta$ for every $\beta\in\RR^4$ and 
\eqref{e:prox}, we have 
\begin{equation}
\label{primeraeqprocla}
\begin{pmatrix}
\alpha\\
\beta\\
\delta
\end{pmatrix}
=\mbox{prox}_{\psi_{i,j}/\gamma}
\begin{pmatrix}
\alpha_0\\
\beta_0\\
\delta_0
\end{pmatrix}
\quad 
\Leftrightarrow\quad 
\begin{pmatrix}
\alpha_0-\alpha\\
\beta_0-\beta\\
\delta_0-\delta
\end{pmatrix}= 
\frac{1}{\gamma}\nabla\psi_{i,j}(\alpha,\beta,\delta)=
\frac{1}{\gamma}\phi_{i,j}'(\alpha+|P_{K}\beta|^{2}/2+\delta)
\begin{pmatrix}
1\\
P_K\beta\\
1
\end{pmatrix}.
\end{equation}
By denoting 
$s_{i,j}=\phi_{i,j}'(\alpha+|P_{K}\beta|^{2}/2 
+\delta)/\gamma\geq0$,
we deduce from \eqref{primeraeqprocla} that 
$\alpha=\alpha_0-s_{i,j}$, $\delta=\delta_0-s_{i,j}$ and, for every 
$\ell\in\{1,\ldots,4\}$, 
$\beta_{\ell}=\frac{P_{K_{\ell}}(\beta_{0})_{\ell}}{1+s_{i,j}}+
P_{-K_{\ell}}(\beta_{0})_{\ell}$, where $K_1=K_3=\RR_+$ and 
$K_2=K_4=\RR_-$.
In other words,
$\beta=\frac{P_{K}\beta_{0}}{1+s_{i,j}}+P_{K^{-}}\beta_{0}$. 
On the other hand,
if $\alpha+|P_{K}\beta|^{2}/2+\delta > g_{i,j}'(0)$, it 
follows from  \eqref{e:defphi} and the previous relations that
\begin{align}
\gamma 
s_{i,j}&=\phi_{i,j}'(\alpha+|P_{K}\beta|^{2}/2+\delta)\nonumber\\
&=(g_{i,j}^*)'\big(\alpha+|P_{K}\beta|^{2}/2+\delta\big)\nonumber\\
&=(g_{i,j}^*)'\Big(\alpha_0+\delta_0-2s_{i,j}+\frac{|P_{K}\beta_0|^{2}}{2(1+s_{i,j})^2}\Big).
\end{align} 
Otherwise, if $\alpha+|P_{K}\beta|^{2}/2+\delta\leq g_{i,j}'(0)$,
$\gamma s_{i,j}=\phi_{i,j}'(\alpha+|P_{K}\beta|^{2}/2+\delta)
=0$, which yields $\alpha=\alpha_0$, $\beta=\beta_0$, and 
$\delta=\delta_0$. Hence, 
$\alpha_0+|P_{K}\beta_0|^{2}/2+\delta_0\leq g_{i,j}'(0)$ and
the result follows. 
\end{proof}
\begin{remark}
Note that, by defining, for every $i,j$, $h_{i,j}\colon 
s\mapsto\gamma s-(g_{i,j}^*)'(\alpha_0+\delta_0-2s
+\frac{|P_{K}\beta_0|^{2}}{2(1+s)^2})$, we have 
$h_{i,j}(0)=-(g_{i,j}^*)'(\alpha_0+\delta_0
+|P_{K}\beta_0|^{2}/2)<-(g_{i,j}^*)'(g_{i,j}'(0))=0$, since
$\alpha_0+\delta_0
+|P_{K}\beta_0|^{2}/2>g_{i,j}'(0)$ and 
$(g_{i,j}^*)'=(g_{i,j}')^{-1}$ is 
strictly increasing. Moreover, it is easy to check that 
$h_{i,j}$ is strictly increasing and $h_{i,j}(\bar{s})>0$, where
$\bar{s}>0$ is the unique solution to $\alpha_0+\delta_0-2s
+\frac{|P_{K}\beta_0|^{2}}{2(1+s)^2}=g_{i,j}'(0)$. Hence, we 
deduce that \eqref{e:NLequation} has a unique solution in 
$]0,\bar{s}[$. Anyway, the existence and unicity of this 
equation can also be deduced from the unicity of $\prox_{\psi}$, 
since $\psi$ is 
proper, convex, and l.s.c.
\end{remark}

\begin{remark}
In particular, consider
	$q=q'=2$  and, for every $i,j\in\{1,\ldots,N_h\}$, 
	$g_{i,j}(m)=r(m-\bar{m}(x_{i,j}))^2/2$,
	where $\bar{m}: \TT^{2}\mapsto \RR$ is a given desired density 
	function and $r>0$ is a given constant. In this case
	\begin{equation}
		\phi_{i,j}(\eta)=(F(x_{i,j},\cdot))^*(\eta)= \left\{\ba{ll} 
		\frac{\eta^{2}}{2r}+ \eta 
		\bar{m}(x_{i,j}) & \mbox{{\rm if} } \eta \geq -r \bar{m}(x_{i,j}),\\[4pt]
		-r\frac{\bar{m}(x_{i,j})^{2}}{2}& \mbox{ {\rm 
		otherwise}},\ea\right.
	\end{equation}
condition in \eqref{e:proxADMMpsi} changes to 
$\alpha_0+|P_{K}\beta_0|^{2}/2+\delta_0\geq-r\bar{m}(x_{i,j})$, 
and 
\eqref{e:NLequation}
reduces to 
\begin{equation}
\gamma s_{i,j}
=\bar{m}(x_{i,j})+\frac{1}{r}\Big(\alpha_0+\delta_0-2s_{i,j}+
\frac{|P_{K}\beta_0|^{2}}{2(1+s_{i,j})^2}\Big).
\end{equation} 
	The ADMM with this type of quadratic functions have been used to solve optimal 
	transport 
	problems in {\rm\cite{MR1738163,MR2516195}} and recently in 
	{\rm\cite{benamoucarlier15}}  in the context of static and time-dependent 
	mean field games. Since diffusion terms and the set $K$ are not considered in 
	{\rm\cite{benamoucarlier15}}, the computation of the 
	proximity operator in our case 
	differs from {\rm\cite[Section~7]{benamoucarlier15}}.
	\end{remark}

\subsubsection{Predictor-corrector proximal multiplier method 
(PCPM)}\label{pcpmu}
Another approach for solving \eqref{e:primalADMM} is proposed by 
Chen 
and Teboulle in \cite{ChenTeb94}. Given $\gamma>0$ and starting 
points 
$(y_0,v_0,\sigma_0)\in\RR^N\times\RR^M\times\RR^M$ iterate
\be\label{pasosChenTeb}
\ba{rcl} p^{k+1}&=& 
\mbox{argmax}_{\sigma}\left\{L(y^{k},v^{k},\sigma)
-\frac{1}{2\gamma}|\sigma-\sigma^k|^2\right\}=
 \sigma^{k}+ \gamma(\Xi y^{k}-v^{k}),\\
y^{k+1}&=&\mbox{argmin}_{y}\left\{L(y,v^k,p^{k+1})
+\frac{1}{2\gamma}|y-y^k|^2\right\}=
\prox_{\gamma\varphi}(y^{k}-\gamma \Xi^{*}p^{k+1}),\\
v^{k+1}&=& 
\mbox{argmin}_{v}\left\{L(y^{k},v,p^{k+1})+\frac{1}{2\gamma}|v-v^k|^2\right\}=\prox_{\gamma\psi}(v^{k}+\gamma
 p^{k+1}),\\[6pt]
\sigma^{k+1}&=&\mbox{argmax}_{\sigma}\left\{L(y^{k+1},v^{k+1},\sigma)
-\frac{1}{2\gamma}|\sigma-\sigma^k|^2\right\}=
 \sigma^{k}+ \gamma(\Xi y^{k+1}-v^{k+1}),
\ea\ee
where the Lagrangian $L$ is defined in \eqref{e:defLagrangian}.
In comparison to ADMM, after a prediction of the multiplier in 
the first step, this method performs an 
additional correction step  on the 
dual variables and parallel updates on 
the primal ones by using the standard Lagrangian with an
additive inertial quadratic term instead of the augmented 
Lagrangian. This feature allows us to perform only 
explicit steps, if $\prox_{\gamma\varphi}$ and 
$\prox_{\gamma\psi}$ can be computed easily, overcoming one of 
the problems of ADMM. The convergence to a solution to \eqref{e:primalADMM}
is obtained provided that $\gamma\in\left]0,\min\{1,\|\Xi\|^{-1}\}/2\right[$.

In the formulation without splitting, 
we have from \eqref{e:defV} that
\be\label{e:defPV}
\prox_{\gamma\psi}=P_V=\Id-G^*(GG^*)^{-1}G(\Id-({\bf 1}_{\mathcal{M}_h},0))\ee
and, since $A{\bf 1}_{\mathcal{M}_h}=0$, we obtain 
$$GG^*=\begin{pmatrix}
\nu^2AA^{*}+BB^{*}&0\\
0& h^2
\end{pmatrix}.$$ 
Hence, by denoting $y=(m,w)$, $\sigma=(n,x)$, and 
${v}=(\bar{n},\bar{x})$,
\eqref{pasosChenTeb} reduces to 
\begin{align}
\begin{pmatrix}
p_1^{k+1}\\
p_2^{k+1}
\end{pmatrix}
&=
\begin{pmatrix}
n^{k}+\gamma(m^k-\bar{n}^k)\\
x^{k}+\gamma(w^k-\bar{x}^k)
\end{pmatrix}\nonumber\\
\begin{pmatrix}
m^{k+1}\\
w^{k+1}
\end{pmatrix}
&=
\prox_{\gamma\varphi}
\begin{pmatrix}
m^{k}-\gamma p_1^{k+1}\\
w^{k}-\gamma p_2^{k+1}
\end{pmatrix}\nonumber\\
\begin{pmatrix}
y^{k+1}\\
z^{k+1}
\end{pmatrix}
&=
\begin{pmatrix}
\bar{n}^k+\gamma p_1^{k+1}\\
\bar{x}^k+\gamma 
p_2^{k+1}
\end{pmatrix}\nonumber\\
\begin{pmatrix}
\bar{n}^{k+1}\\
\bar{x}^{k+1}
\end{pmatrix}
&=
\begin{pmatrix}
y^{k+1}-\nu 
A^{*}(\nu^2AA^{*}+BB^{*})^{-1}\big(\nu 
Ay^{k+1}+Bz^{k+1}\big)-(h^2\sum_{i,j}y_{i,j}^{k+1}-1){\bf 
1}_{\M_h}\\
z^{k+1}-B^{*}(\nu^2AA^{*}+BB^{*})^{-1}\big(\nu 
Ay^{k+1}+Bz^{k+1}\big)
\end{pmatrix}\nonumber\\
\begin{pmatrix}
{n}^{k+1}\\
{x}^{k+1}
\end{pmatrix}
&=
\begin{pmatrix}
{n}^k+\gamma (m^{k+1}-\bar{n}^{k+1})\\
{x}^k+\gamma (w^{k+1}-\bar{x}^{k+1})
\end{pmatrix},
\end{align}
where $\prox_{\gamma\varphi}$ will be computed in 
Proposition~\ref{p:calprox}.

\subsubsection{Chambolle-Pock's splitting (CP)}\label{cpu}

Inspired on the caracterization \eqref{e:caract} obtained from 
the 
optimality conditions, Chambolle and Pock in 
\cite{CPock11} propose an alternative primal-dual method for 
solving \eqref{e:primal} and \eqref{e:dual}. More precisely,
given $\theta\in[0,1]$, $\gamma>0$, $\tau>0$ 
and starting points
$(y^0,\bar{y}^0,\sigma^0)\in\RR^{N}\times\RR^{N}\times\RR^{M}$, 
the iteration 
\be\label{pasosChamPock}
\ba{rcl} \sigma^{k+1}&=& 
\mbox{argmax}_{\sigma}\left\{\mathcal{L}(\bar{y}^{k},\sigma)
-\frac{1}{2\gamma}|\sigma-\sigma^k|^2\right\}=
 \prox_{\gamma\psi^*}(\sigma^{k}+ \gamma\Xi\bar{y}^{k})\\
y^{k+1}&=&\mbox{argmin}_{y}\left\{\mathcal{L}(y,\sigma^{k+1})
+\frac{1}{2\tau}|y-y^k|^2\right\}=
\prox_{\tau\varphi}(y^{k}-\tau \Xi^{*}\sigma^{k+1})\\
\bar{y}^{k+1}&=& 
 y^{k+1}+ \theta(y^{k+1}-y^{k}),
\ea\ee
where 
$\mathcal{L}(y,\sigma):=\min_{v\in\RR^M}L(y,v,\sigma)=\sigma^{*}\Xi
y+\varphi(y)-\psi^*(\sigma)$,
generates a sequence $(y^k,\sigma^k)_{k\in\NN}$ which converges 
to a primal-dual solution to \eqref{e:primal}-\eqref{e:dual}.
Note 
that, if the proximity operators 
associated to $\varphi$ and $\psi^*$ are explicit, the method 
has 
only 
explicit steps, overcoming the difficulties of ADMM.  
For any $\theta\in]0,1]$, the last step of the 
method includes information of the last two 
iterations. 
The procedure of including memory on the algorithms has been 
shown to 
accelerate the methods for specific choice of the stepsizes (see 
\cite{Nest1,Nest2,BeckT}). The convergence of the 
method is obtained provided that $\tau\gamma<1/\|\Xi\|^2$.
 
Since in the formulation without splitting
 $\prox_{\gamma\psi*}=\Id-\gamma P_V\circ(\Id/\gamma)$ 
 \cite[Theorem~14.3(ii)]{BausCombettes11},
by using \eqref{e:defPV} and denoting $y=(m,w)$, $\sigma=(n,v)$, and 
$\bar{y}=(\bar{m},\bar{w})$, \eqref{pasosChamPock} reduces to
\begin{align}
\begin{pmatrix}
y^{k+1}\\
z^{k+1}
\end{pmatrix}
&=
\begin{pmatrix}
n^{k}+\gamma(\bar{m}^{k}-{\bf 
1}_{\M_h})\\
v^{k}+\gamma \bar{w}^{k}
\end{pmatrix}\nonumber\\
\begin{pmatrix}
n^{k+1}\\
v^{k+1}
\end{pmatrix}
&=
\begin{pmatrix}
\nu 
A^{*}(\nu^2AA^{*}+BB^{*})^{-1}\big(\nu 
Ay^{k+1}+Bz^{k+1}\big)+h^2\sum_{i,j}y_{i,j}^{k+1}{\bf 
	1}_{\M_h}\\
B^{*}(\nu^2AA^{*}+BB^{*})^{-1}\big(\nu 
Ay^{k+1}+Bz^{k+1}\big)
\end{pmatrix}\nonumber\\
\begin{pmatrix}
m^{k+1}\\
w^{k+1}
\end{pmatrix}
&=
\prox_{\tau\varphi}
\begin{pmatrix}
m^{k}-\tau n^{k+1}\\
w^{k}-\tau v^{k+1}
\end{pmatrix}\nonumber\\
\begin{pmatrix}
\bar{m}^{k+1}\\
\bar{w}^{k+1}
\end{pmatrix}
&=
\begin{pmatrix}
{m}^{k+1}+\theta({m}^{k+1}-{m}^{k})\\
{w}^{k+1}+\theta({w}^{k+1}-{w}^{k}),
\end{pmatrix}.
\end{align}
where, as before, $\prox_{\tau\varphi}$ will be computed in 
Proposition~\ref{p:calprox}.

\subsubsection{Monotone $+$ skew splitting method (MS)}\label{msu}
Alternatively, in \cite{Siopt1} a monotone 
operator-based approach is 
used, also inspired in the optimality conditions 
\eqref{e:caract}.  By calling $\mathcal{A}\colon 
(y,\sigma)\mapsto 
\partial\varphi(y)\times\partial\psi^*(\sigma)$ and 
$\mathcal{B}\colon (y,\sigma)\mapsto (\Xi^{*}\sigma,-\Xi y)$, 
\eqref{e:caract}  is equivalent to 
$(0,0)\in 
\mathcal{A}(\hat{y},\hat{\sigma})+\mathcal{B}(\hat{y},\hat{\sigma})$,
 where $\mathcal{A}$ is maximally monotone and $\mathcal{B}$ is 
skew linear. Under these conditions, Tseng in \cite{Tseng00} 
proposed an splitting algorithm for solving this monotone 
inclusion which performs two explicit steps on $\mathcal{B}$ and 
an implicit step on $\mathcal{A}$.
In our convex optimization context, given $\gamma>0$ 
and 
starting points $(y^0,\sigma^0)\in\RR^N\times\RR^M$,
the algorithm iterates, for every $k\geq 0$,
\be\label{pasosSiopt}
\ba{rcl} 
\eta^{k}&=& 
\prox_{\gamma\psi^*}(\sigma^{k}+ 
\gamma\Xi{y}^{k})=\mbox{argmax}_{\sigma}\left\{\mathcal{L}({y}^{k},\sigma)
-\frac{1}{2\gamma}|\sigma-\sigma^k|^2\right\}\\
p^{k}&=&
\prox_{\gamma\varphi}(y^{k}-\gamma 
\Xi^{*}\sigma^{k})=\mbox{argmin}_{y}\left\{\mathcal{L}(y,\sigma^{k})
+\frac{1}{2\gamma}|y-y^k|^2\right\}\\
\sigma^{k+1}&=&\eta^k+\gamma \Xi(p^k-y^{k})\\
{y}^{k+1}&=&p^k-\gamma \Xi^{*}(\eta^k-\sigma^{k}).
\ea\ee
Note that the updates on variables 
$\eta^k$ and $p^k$ can be performed in parallel. The convergence of the method is guaranteed if $\gamma\in\left]0,\|\Xi\|^{-1}\right[$.

Considering the formulation without splitting and proceeding 
analogously as in previous methods, 
\eqref{pasosSiopt} 
reduces to
\begin{align}
\begin{pmatrix}
y^{k+1}\\
z^{k+1}
\end{pmatrix}
&=
\begin{pmatrix}
n^{k}+\gamma({m}^{k}-{\bf 
1}_{\M_h})\\
v^{k}+\gamma {w}^{k}
\end{pmatrix}\nonumber\\
\begin{pmatrix}
\eta_1^{k}\\
\eta_2^{k}
\end{pmatrix}
&=
\begin{pmatrix}
\nu 
A^{*}(\nu^2AA^{*}+BB^{*})^{-1}\big(\nu 
Ay^{k+1}+Bz^{k+1}\big)+h^2\sum_{i,j}y_{i,j}^{k+1}{\bf 
	1}_{\M_h}\\
B^{*}(\nu^2AA^{*}+BB^{*})^{-1}\big(\nu 
Ay^{k+1}+Bz^{k+1}\big)
\end{pmatrix}\nonumber\\
\begin{pmatrix}
p_1^{k}\\
p_2^{k}
\end{pmatrix}
&=
\prox_{\gamma\varphi}
\begin{pmatrix}
m^{k}-\gamma n^k\\
w^{k}-\gamma v^{k}
\end{pmatrix}\nonumber\\
\begin{pmatrix}
{n}^{k+1}\\
{v}^{k+1}
\end{pmatrix}
&=
\begin{pmatrix}
{\eta}_1^{k}+\gamma(p_1^k-{m}^{k})\\
{\eta}_2^{k}+\gamma(p_2^k-{w}^{k})
\end{pmatrix}\nonumber\\
\begin{pmatrix}
{m}^{k+1}\\
{w}^{k+1}
\end{pmatrix}
&=
\begin{pmatrix}
{p}_1^{k}-\gamma(\eta_1^k-n^k)\\
{p}_2^{k}-\gamma(\eta_2^k-v^k)
\end{pmatrix}.
\end{align}

Note that in all previous algorithms, the inversion of the 
matrix $(\nu^2AA^{*}+BB^{*})$ is needed, which is usually badly 
conditioned in this 
type of applications depending on the viscosity paremeter $\nu$ 
(see the 
discussions in \cite{MR2928376,benamoucarlier15}). 
The inverse of $(\nu^2AA^{*}+BB^{*})$ is not needed in any of 
the previous 
methods if we use the formulation {\em with splitting}, i.e., if we split the 
influence of linear operators from $\psi$. However, in this case 
we obtain very slow algorithms, whose primal iterates usually do 
not 
satisfy any of the constraints. This motivates the 
following method 
which, by enforcing the iterates to satisfy (some of) the 
constraints via an additional projection step, has a better
performance than methods with splitting without any matrix 
inversion.

\subsubsection{Projected Chambolle-Pock splitting}\label{cpsp}
In this section we propose a modification of Chambolle-Pock 
splitting, whose convergence to a solution to $(P_h^d)$ is 
proved in the Appendix. This 
modification includes an additional projection step onto a set 
in 
which the solution has to be. In the case in which this set is an
affine 
vectorial subspace generated by (some of) the linear 
constraints, this 
modification allows us to guarantee that the generated iterates 
satisfy these constraints.

In order to present our algorithm in a general setting, let 
$C$ be closed convex subset of $\RR^N$ and consider 
the problem of finding a point in 
\begin{equation}
\Z=\menge{(y,\sigma)\in 
C\times\RR^M}{-\Xi^{*}{\sigma}\in\partial\varphi({y}),\:
\Xi{y}\in\partial\psi^*({\sigma})
}
\end{equation}
assuming $\Z\neq\varnothing$. Note that, from \eqref{e:caract}, 
every 
point in $\Z$ is a primal-dual solution to 
\eqref{e:primal}-\eqref{e:dual}. The 
following theorem provides the modified 
method and its convergence, whose proof can be 
found in the Appendix.
\begin{theorem}
\label{t:CPockproj}
Let $\gamma>0$ and $\tau>0$ be such that $\gamma\tau\|\Xi\|^2<1$ and 
let $(y^0,\bar{y}^0,\sigma^0)\in\RR^N\times\RR^N\times\RR^M$ be 
arbitrary starting points. For every $k\geq 0$ consider the routine
\begin{align}
\label{e:cpockproj}
\sigma^{k+1}&=\prox_{\gamma\psi^*}(\sigma^k+\gamma\Xi\bar{y}^k)\nonumber\\
p^{k+1}&=\prox_{\tau\varphi}(y^k-\tau\Xi^{*}\sigma^{k+1})\nonumber\\
y^{k+1}&=P_C\,p^{k+1}\nonumber\\
\bar{y}^{k+1}&= y^{k+1}+ \theta(p^{k+1}-y^{k}).
\end{align}
Then, there exists $(\hat{y},\hat{\sigma})\in\Z$  such that 
$y^k\to\hat{y}$ 
and
$\sigma^k\to\hat{\sigma}$. 
\end{theorem}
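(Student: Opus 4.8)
```latex
The plan is to recognize the routine \eqref{e:cpockproj} as a
perturbation of the standard Chambolle--Pock iteration
\eqref{pasosChamPock}, where the only novelty is the extra projection
$y^{k+1}=P_C\,p^{k+1}$ interposed between the primal proximal step and
the inertial extrapolation. Since $P_C$ is firmly nonexpansive and
every point of $\Z$ lies in $C$ (so that $P_C$ fixes the primal
component of any target solution), the strategy is to embed the whole
iteration into the abstract framework of a fixed-point iteration of an
averaged operator on the product space $\RR^N\times\RR^M$, equipped with
a suitable metric that renders the Chambolle--Pock map nonexpansive.
Concretely, I would first endow $\RR^N\times\RR^M$ with the inner
product induced by the block matrix
$$
M=\begin{pmatrix} \tfrac{1}{\tau}\,\Id & -\Xi^{*}\\ -\Xi & \tfrac{1}{\gamma}\,\Id\end{pmatrix},
$$
which is positive definite precisely under the stepsize condition
$\gamma\tau\|\Xi\|^2<1$. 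In this $M$-metric the basic Chambolle--Pock
step is known to be the resolvent of a maximally monotone operator,
hence firmly nonexpansive, so the unprojected map is $1/2$-averaged.

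Next I would account for the projection. The key algebraic observation
is that $P_C$ acts only on the primal block and that, thanks to
$\Z\subseteq C\times\RR^M$, any $(\hat y,\hat\sigma)\in\Z$ satisfies
$P_C\hat y=\hat y$; therefore the composition of the Chambolle--Pock map
with the block projection $\operatorname{diag}(P_C,\Id)$ still has $\Z$
as its fixed-point set. Because a composition of averaged (in
particular firmly nonexpansive) operators is again averaged, the full
update $(y^k,\sigma^k)\mapsto(y^{k+1},\sigma^{k+1})$ is an averaged
operator in the $M$-metric whose fixed points are exactly the elements
of $\Z$. The care required here is that the inertial term in
\eqref{e:cpockproj} is built from $p^{k+1}$ and $y^k$ rather than from
$y^{k+1}$ and $y^k$; I would reconcile this by writing the iteration in
the equivalent over-relaxed resolvent form used in
\cite{CPock11,Siopt1} and checking that the projection step is
compatible with that reformulation, i.e.\ that replacing $p^{k+1}$ by
$y^{k+1}=P_C\,p^{k+1}$ in the memory term corresponds exactly to
applying the block projection inside the averaged operator rather than
outside it.

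With the iteration identified as a Krasnosel'ski\u\i--Mann scheme for an
averaged operator $T$ with $\operatorname{Fix}T=\Z\neq\varnothing$, the
conclusion follows from the standard convergence theorem for such
iterations (see \cite[Theorem~5.15]{BausCombettes11}): the sequence
$(y^k,\sigma^k)$ is Fej\'er monotone with respect to $\Z$ in the
$M$-metric, hence bounded, the residuals
$(y^{k+1},\sigma^{k+1})-(y^k,\sigma^k)$ tend to zero, and every weak
cluster point is a fixed point of $T$, so by finite-dimensionality the
whole sequence converges to some $(\hat y,\hat\sigma)\in\Z$. Passing
from weak to strong convergence is immediate here since we are in
$\RR^N\times\RR^M$. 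I expect the main obstacle to be the second
paragraph: verifying rigorously that inserting $P_C$ between the
proximal and the extrapolation steps preserves the averagedness of the
combined operator and does not destroy the fixed-point characterization,
since the extrapolation uses the projected iterate in the memory term.
This is where the precise placement of $P_C$ in the resolvent
reformulation matters, and it is the step that makes the projected
variant nontrivial rather than a cosmetic modification of
\cite{CPock11}.
```
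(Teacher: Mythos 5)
Your overall strategy --- recasting the projected iteration as a Krasnosel'ski\u{\i}--Mann scheme for an averaged operator in the metric induced by $M=\bigl(\begin{smallmatrix}\tau^{-1}\Id & -\Xi^{*}\\ -\Xi & \gamma^{-1}\Id\end{smallmatrix}\bigr)$ --- is genuinely different from the paper's argument, but it contains a gap that you flag and then do not close, and I do not believe it closes in the form proposed. The difficulty is a metric mismatch: the Chambolle--Pock map is firmly nonexpansive \emph{in the $M$-metric} (this is the content of the proximal-point reformulation), whereas $\operatorname{diag}(P_C,\Id)$ is firmly nonexpansive \emph{in the Euclidean metric}. Because $M$ has the off-diagonal blocks $-\Xi$, a block-diagonal projection is in general \emph{not} nonexpansive for $\|\cdot\|_M$: the cross term $-2\scal{\Xi(P_Cy-P_Cy')}{\sigma-\sigma'}$ can increase even though $|P_Cy-P_Cy'|\leq|y-y'|$. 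Hence ``a composition of averaged operators is again averaged'' does not apply here: the two factors are averaged with respect to different inner products. A second, related problem is that the proximal-point reformulation of Chambolle--Pock relies on the extrapolation being exactly $2y^k-y^{k-1}$ so that the two-step recursion collapses into a resolvent step; in \eqref{e:cpockproj} one has $\bar y^k=y^k+\theta(p^k-y^{k-1})$ with $p^k\neq y^k$ whenever the projection is active, so the iteration does not reduce to the resolvent of a fixed operator on $(y,\sigma)$, and the ``over-relaxed resolvent form'' you invoke is precisely what fails to exist. You identify this as the main obstacle but leave it as a verification to be done; it is not a verification, it is the substance of the theorem.

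The paper avoids the operator-theoretic packaging and runs a direct energy estimate: it fixes $(\hat y,\hat\sigma)\in\Z$, inserts the Euclidean firm nonexpansiveness of $P_C$ (using $P_C\hat y=\hat y$) in the form $|y^{k+1}-\hat y|^2\leq|p^{k+1}-\hat y|^2-|p^{k+1}-y^{k+1}|^2$ into the standard Chambolle--Pock descent inequality --- this is compatible because that inequality measures the primal error with the plain norm $|\cdot|^2/(2\tau)$, not the $M$-norm --- and then absorbs the cross term $\scal{\sigma^{k+1}-\hat\sigma}{\Xi(\bar y^k-p^{k+1})}$ by Cauchy--Schwarz using $\sqrt{\gamma\tau}\|\Xi\|<1$. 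The resulting quantity $\frac{|y^k-\hat y|^2}{2\tau}+a^k$ is Fej\'er monotone, the discarded terms force $p^k-y^k\to0$, $y^{k-1}-p^k\to0$ and $\sigma^{k+1}-\sigma^k\to0$, every cluster point is then shown to lie in $\Z$ by continuity of the proximity operators and of $P_C$, and uniqueness of the cluster point is obtained by an Opial-type computation. To salvage your approach you would have to either prove nonexpansiveness of the composed map in a single common metric (false in general) or reproduce essentially this term-by-term estimate, at which point the averaged-operator framework buys nothing.
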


In order to focus only on the
projection onto the constraint 
$h^2\sum_{i,j=1}^{N_h}m_{i,j}=1$, we consider the  
formulation {\em with splitting} detailed in Section~\ref{sec:5.meth} 
and the previous method 
with    
$$C=\Menge{(m,w)\in\RR^{N}}{h^2\sum_{i,j=1}^{N_h}m_{i,j}=1}.$$
We 
obtain $\prox_{\gamma\psi^*}=\Id-\gamma(0,1)$ and 
$P_C\colon(m,w)\mapsto ({\bf 
1}_{\M_h}+(m-h^2\sum_{i,j=1}^{N_h}m_{i,j}{\bf 
1}_{\M_h}),w)$
and, hence, by denoting $\sigma=(u,\lambda)$, $y=(m,w)$,
$\bar{y}=(\bar{m},\bar{w})$, $p=(n,v)$, \eqref{e:cpockproj} 
reduces to
\begin{align}
\begin{pmatrix}
u^{k+1}\\
\lambda^{k+1}
\end{pmatrix}
&=
\begin{pmatrix}
u^{k}+\gamma(A\bar{m}^{k}+B\bar{w}^{k})\\
\lambda^{k}+\gamma (h^2\sum_{i,j}\bar{m}_{i,j}^{k}-1)
\end{pmatrix}\nonumber\\
\begin{pmatrix}
n^{k+1}\\
v^{k+1}
\end{pmatrix}
&=
\prox_{\tau\varphi}
\begin{pmatrix}
m^{k}-\tau(A^{*}u^{k+1}+h^2\lambda^{k+1}{\bf 
1}_{\M_h})\\
w^{k}-\tau B^{*}u^{k+1}
\end{pmatrix}\nonumber\\
\begin{pmatrix}
{m}^{k+1}\\
{w}^{k+1}
\end{pmatrix}
&=
\begin{pmatrix}
{\bf 
1}_{\M_h}+\big(n^{k+1}-
h^2\sum_{i,j=1}^{N_h}n_{i,j}^{k+1}{\bf 
1}_{\M_h}\big)\\
v^{k+1}
\end{pmatrix}\nonumber\\
\begin{pmatrix}
\bar{m}^{k+1}\\
\bar{w}^{k+1}
\end{pmatrix}
&=
\begin{pmatrix}
{m}^{k+1}+\theta(n^{k+1}-m^k)\\
{w}^{k+1}+\theta(v^{k+1}-w^k)
\end{pmatrix}.
\end{align}
As opposite to previous algorithms, this method does not need to 
invert any matrix. Its performance is explored in 
Section~\ref{sec:num}.
%

\subsection{Computing the proximity operator of $\varphi$}\label{proxvarphisection}

In each of the three last methods proposed in this section it is
important to compute $\prox_{\gamma\varphi}$ efficiently for 
each $\gamma>0$. 
In order to compute the proximity operator of the objective 
function 
in $(P_h^{d})$ and $(P_h)$, we need to 
introduce some notations and properties. Let $F\colon 
\RR_+\to\RR$ be a 
convex function which is differentiable in $\RR_{++}:=\{x\in \RR 
\; ; \; x>0\}$ and extended by 
taking the value $+\infty$ in $\RR_{--}:=\RR_{-}\setminus 
\{0\}$, let 
$d>0$,
let $\gamma>0$, 
and 
let $q\in \left]1,\pinf\right[$.  
For every 
$(m,w)\in\RR\times\RR^4$, define $F'(0):=\lim_{h\to 
0^+}(F(h)-F(0))/h$ which is assumed to exist in 
$[-\infty,\pinf[$, set
\begin{align}
\label{e:polinomio}
D(m)&=\menge{(p,\delta)\in \RR_+\times \RR}{p+\gamma 
F'(p)+\delta\geq m}
\end{align}
and, for every $(p,\delta)\in D(m)$, set
\small
\be\label{Qmw}
Q_{m,w}(p,\delta)=\left(p+\gamma 
F'(p)-m+\delta \right)
\left(p +\gamma^{2/q}
(q')^{1-2/q}(p+\gamma F'(p)-m+\delta)^{1-2/q}\right)^q-
\frac{\gamma}{q'}|P_Kw|^q.
\ee
\normalsize
Note that since $F'$ is increasing, given $p\in \RR_{+}$ for all 
$p'\geq p$ and $\delta'\geq m-p-\gamma F'(p)$ we have that 
$(p',\delta')\in D(m)$. Analogously, given $\delta \in \RR$ for 
all $\delta'\geq \delta$ and $p'\geq \prox_{\gamma F}(m-\delta) 
$ we have that $(p',\delta')\in D(m)$. Therefore, the following 
result is a direct consequence of the definition.
\begin{lemma} 
\label{l:properties}
Let $(m,w)\in\RR\times\RR^4$ and $(p,\delta)\in \RR_+\times \RR$ 
such that $F'(p)\in \RR$. We have that $Q_{m,w}(\cdot,\delta)$ 
and $Q_{m,w}(p,\cdot)$ 
are continuous and strictly 
increasing in $[\prox_{\gamma F}(m-\delta), +\infty[$ and 
$[m-p-\gamma F'(p),+\infty[$, respectively. Moreover, 
$$\lim_{\delta\to\pinf}Q_{m,w}(p,\delta)=\pinf \hspace{0.3cm} 
\mbox{and } \; 
\hspace{0.2cm}\lim_{p\to\pinf}Q_{m,w}(p,\delta)=\pinf.$$
\end{lemma}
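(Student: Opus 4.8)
The plan is to reduce everything to the monotonicity of a single scalar function. Set $c:=\gamma^{2/q}(q')^{1-2/q}>0$ and, for $(p,\delta)\in D(m)$, introduce the quantity $t:=p+\gamma F'(p)-m+\delta$, so that the membership $(p,\delta)\in D(m)$ reads exactly $t\geq 0$ and
$$Q_{m,w}(p,\delta)=t\,\big(p+c\,t^{1-2/q}\big)^q-\frac{\gamma}{q'}|P_Kw|^q.$$
Since $F$ is convex, $p\mapsto p+\gamma F'(p)$ is strictly increasing, and $\delta\mapsto t$ is affine with slope $1$. The left endpoints of the two intervals are precisely the points where $t$ attains its minimal admissible value: for $Q_{m,w}(p,\cdot)$ one has $t=\delta-(m-p-\gamma F'(p))$, so $t=0$ at $\delta=m-p-\gamma F'(p)$; for $Q_{m,w}(\cdot,\delta)$ the optimality condition $(m-\delta)-\prox_{\gamma F}(m-\delta)\in\gamma\partial F(\prox_{\gamma F}(m-\delta))$ gives $t=0$ when $\prox_{\gamma F}(m-\delta)>0$ and $t\geq0$ otherwise. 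I would therefore study the scalar map $h_a\colon t\mapsto t\,(a+c\,t^{1-2/q})^q$ for fixed $a\geq 0$ on $[0,+\infty[$.

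The key step is to show that, for every $a\ge0$, $h_a$ is continuous and strictly increasing on $[0,+\infty[$ with $h_a(t)\to+\infty$ as $t\to+\infty$. For $q\geq 2$ this is direct: the exponent $1-2/q$ is nonnegative, so $t\mapsto a+c\,t^{1-2/q}$ is nonnegative and nondecreasing (and $a+c>0$ when $q=2$), whence $h_a$ is a product of nonnegative increasing factors. The delicate case is $1<q<2$, where $t^{1-2/q}$ is \emph{decreasing} and the bracket is not manifestly monotone; here I would use the algebraic identity $(1-2/q)q=q-2$ to rewrite
$$h_a(t)=t^{q-1}\big(a\,t^{2/q-1}+c\big)^q,\qquad q-1>0,\quad 2/q-1>0,$$
which again exhibits $h_a$ as a product of nonnegative strictly increasing factors. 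In both regimes $h_a(t)\to+\infty$ as $t\to+\infty$ and $\lim_{t\to0^+}h_a(t)=0$; the latter fixes the endpoint value (resolving the indeterminate $0\cdot(+\infty)$ in the raw formula when $q<2$), so that $Q_{m,w}=-\frac{\gamma}{q'}|P_Kw|^q$ there, and strict monotonicity on $]0,+\infty[$ together with right-continuity upgrades to strict monotonicity on $[0,+\infty[$. One also records that $h_a$ is nondecreasing in $a$ for each fixed $t\geq 0$, strictly so when $t>0$.

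With this in hand the three assertions follow. For $Q_{m,w}(p,\cdot)$ the variable $\delta$ enters only through $t$, which increases affinely from $0$; hence $Q_{m,w}(p,\cdot)=h_p(t)-\text{const}$ is continuous and strictly increasing on $[m-p-\gamma F'(p),+\infty[$, and $\delta\to+\infty$ forces $t\to+\infty$, giving the first limit. For $Q_{m,w}(\cdot,\delta)$, as $p'$ increases on $[\prox_{\gamma F}(m-\delta),+\infty[$ both the explicit argument $a=p'$ and $t=t(p')$ increase strictly; since $h$ is strictly increasing in $t$ and nondecreasing in $a$, the chain $h_{a_1}(t_1)\le h_{a_2}(t_1)<h_{a_2}(t_2)$ yields strict monotonicity, and continuity comes from continuity of $F'$ on $\RR_{++}$ (a convex function differentiable on an open interval is continuously differentiable there) together with right-continuity of the right derivative at a possible endpoint $p'=0$, which can only occur when $F'(0)$ is finite. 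Finally, as $p\to+\infty$ one has $t\to+\infty$ and $p+c\,t^{1-2/q}\geq p$, so $Q_{m,w}(p,\delta)\geq t\,p^q-\text{const}\to+\infty$.

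The main obstacle I anticipate is exactly the range $1<q<2$: the naive factor $a+c\,t^{1-2/q}$ is decreasing in $t$, so monotonicity of $h_a$ is invisible termwise and must be extracted through the exponent identity $(1-2/q)q=q-2$; the same regime produces the $0\cdot(+\infty)$ indeterminacy at $t=0$, which has to be resolved by the limit computation to make the endpoint value and the continuity claim meaningful.
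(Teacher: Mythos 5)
Your proof is correct. The paper itself offers no argument for this lemma: after the remark describing the domain $D(m)$, it simply declares the result ``a direct consequence of the definition.'' That dismissal is fair for $q\geq 2$, where the two factors of $t\mapsto t\,(a+c\,t^{1-2/q})^q$ are visibly nonnegative and nondecreasing, but for $1<q<2$ the inner factor is decreasing in $t$ and the monotonicity is not termwise evident; your rewrite $h_a(t)=t^{q-1}(a\,t^{2/q-1}+c)^q$ via $(1-2/q)q=q-2$ is exactly the observation needed to justify the claim in that regime, and it simultaneously resolves the $0\cdot(+\infty)$ indeterminacy at the left endpoints $t=0$ in a way consistent with how the paper later evaluates $Q_{m,w}$ there (e.g.\ $Q_{m,w}(\prox_{\gamma F}m,0)=-\gamma|P_Kw|^q/q'$ in the proof of Proposition \ref{p:calprox}). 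The remaining ingredients --- strict monotonicity of $p\mapsto p+\gamma F'(p)$, joint monotonicity in $(a,t)$ giving the chain $h_{a_1}(t_1)\le h_{a_2}(t_1)<h_{a_2}(t_2)$, right-continuity of $F'$ at $0$ for a convex function, and the lower bound $Q_{m,w}(p,\delta)\ge t\,p^q-\mathrm{const}$ for the limit in $p$ --- are all handled properly. In short, you have supplied a complete proof of a statement the paper leaves unproved, and identified precisely where the ``direct consequence'' shorthand hides actual content.
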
 \smallskip

The following result provides the computation of the proximity 
operator of the objective function in $(P_{h}^{d})$. 

\begin{proposition}
\label{p:calprox}
Let $q\in\left]1,\pinf\right[$, let $d>0$, let $\gamma>0$, and 
let $F\colon\RR_{++}\to\RR$ be a convex differentiable function 
satisfying that 
$F'(0):=\lim_{h\to 0^+}(F(h)-F(0))/h\in\RR$ exists. Define 
  $\varphi\colon(m,w)\mapsto 
F(m)+\hat{b}(m,w)+\iota_{[0,d]}(m)$. Then, $\varphi$ is  
convex, proper, and lower semicontinuous. Moreover, given 
$(m,w)\in\RR\times\RR^4$ and $(p,\delta)\in D(m)$, by setting 
\begin{equation}
\label{e:defv}
v_{m,w}(p,\delta)=
\displaystyle{\frac{p}{p+\gamma^{2/q}
(q')^{1-2/q}(p+\gamma F'(p)-m+\delta)^{1-2/q}}}P_Kw,
\end{equation} 
we have that 
\begin{equation}
\label{e:proxfinal}
\prox_{\gamma\varphi}\colon(m,w)\mapsto
\begin{cases}
(0,0),\quad&\text{if }m\leq\gamma 
F'(0)\:\:\text{and}\:\:Q_{m,w}(0,0)\geq0,\\
(p^*,v_{m,w}(p^*,0)),&\text{if }
m\leq\gamma 
F'(0)\:\:\text{and}\:\:Q_{m,w}(0,0)<0< Q_{m,w}(d,0),\\
&\text{or}\:\: 
\gamma 
F'(0)<m< d+\gamma 
F'(d)\:\:\text{and}\:\:\:Q_{m,w}(d,0)>0,\\
(d,v_{m,w}(d,\delta^*)),&\text{otherwise},
\end{cases}
\end{equation}
where $p^*\geq 0$ and $\delta^*\geq0$ are the unique solutions 
to 
$Q_{m,w}(p,0)=0$ and $Q_{m,w}(d,\delta)=0$, respectively.
\end{proposition}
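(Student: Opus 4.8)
The plan is to characterize $(p,v):=\prox_{\gamma\varphi}(m,w)$ through the optimality relation \eqref{e:prox}, namely $(m-p,\,w-v)\in\gamma\,\partial\varphi(p,v)$, and then to reduce this vector inclusion to a pair of scalar equations in the unknowns $(p,\delta)$. Existence and uniqueness of $(p,v)$ are immediate once $\varphi$ is shown to be proper, convex and l.s.c.: the summands $F(\cdot)+\iota_{[0,d]}(\cdot)$ and $\hat b$ are each proper convex l.s.c. (the latter by Lemma~\ref{l:partial}), so their sum is, and it is finite at any point with $0<p<d$ and $v\in\mbox{int}(K)$. Since $\hat b$ is finite and continuous at such a point, the Moreau--Rockafellar sum rule (exactly as invoked in the proof of Theorem~\ref{condicionesoptimalidad}) gives $\partial\varphi(p,v)=\partial\hat b(p,v)+\big(\partial(F+\iota_{[0,d]})(p)\times\{0\}\big)$, into which I would substitute the explicit form \eqref{subdifferentialhatb} of $\partial\hat b$.

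First I would treat the $w$-component. For $p>0$ the inclusion reads $w-v\in\gamma\frac{|v|^{q-2}}{p^{q-1}}v+\gamma N_K(v)$. Arguing coordinatewise exactly as in the proof of Lemma~\ref{l:partial} (each factor of $K$ being a half-line, with the normal cone acting on the inactive sign), this forces $v$ to be a nonnegative multiple of $P_Kw$ and yields $\big(1+\gamma|v|^{q-2}/p^{q-1}\big)v=P_Kw$. Introducing the abbreviation $E:=p+\gamma F'(p)-m+\delta$, where $\delta$ is the box-constraint multiplier supplied by $N_{[0,d]}(p)$ in the $m$-component, the $m$-equation (with $\delta=0$ when $0<p<d$, since $N_{[0,d]}(p)=\{0\}$; and with $\delta\ge0$ when $p=d$, since $N_{[0,d]}(d)=\RR_+$) reads $E=\frac{\gamma}{q'}|v|^q/p^q$, so that $|v|^q=\frac{q'}{\gamma}E\,p^q$. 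Eliminating $|v|$ between this relation and $\big(1+\gamma|v|^{q-2}/p^{q-1}\big)v=P_Kw$ produces exactly the closed form $v=v_{m,w}(p,\delta)$ of \eqref{e:defv}, and imposing the consistency of the two resulting values of $|v|$ collapses to the single scalar equation $Q_{m,w}(p,\delta)=0$ of \eqref{Qmw}; this last reduction is a direct algebraic verification using $q/(q-1)=q'$ and $1/(q-1)=q'-1$.

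The degenerate endpoint $p=0$ must be handled apart, since there $\partial\hat b(0,0)$ is the whole set $C$ of Lemma~\ref{l:partial} rather than a gradient. In this case $v=0$, and the inclusion $(m,w)\in\gamma\,\partial\varphi(0,0)=\gamma\big(C+((-\infty,F'(0)]\times\{0\})\big)$ is solvable exactly when the $F$-subgradient can be taken equal to $F'(0)$ while landing in $C$; unwinding the definition of $C$ together with the positive homogeneity of $P_K$ shows this is equivalent to $m\le\gamma F'(0)$ and $Q_{m,w}(0,0)\ge0$, which is the first branch of \eqref{e:proxfinal}. This is the computation I would write out in full, as it pins down that branch.

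Finally I would assemble the case analysis using the monotonicity recorded in Lemma~\ref{l:properties}. Since $Q_{m,w}(\cdot,0)$ is continuous, strictly increasing on its domain and tends to $+\infty$, and likewise $Q_{m,w}(d,\cdot)$ is strictly increasing with limit $+\infty$, the solving pair $(p,\delta)$ is unique and is located by the signs of $Q_{m,w}(0,0)$ and $Q_{m,w}(d,0)$ together with the thresholds $m\lessgtr\gamma F'(0)$ (where $p$ detaches from $0$) and $m\lessgtr d+\gamma F'(d)$ (where $p$ reaches $d$): a root $p^*\in(0,d)$ of $Q_{m,w}(\cdot,0)$ exists precisely under either condition of the middle branch, giving $(p^*,v_{m,w}(p^*,0))$; otherwise the (unconstrained) root lies at or beyond $d$ (equivalently $Q_{m,w}(d,0)\le0$, or $E$ is negative at $p=d$), the box constraint is active, $p=d$, and the unique $\delta^*\ge0$ with $Q_{m,w}(d,\delta^*)=0$ yields the last branch $(d,v_{m,w}(d,\delta^*))$. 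The main obstacle is precisely this bookkeeping: one must check that the feasibility region $\{E\ge0\}$, the sign changes of $Q_{m,w}$, and the two thresholds align so that the three printed branches exactly partition $(m,w)$-space and that each asserted $(p,\delta)$ does satisfy the full inclusion; by contrast the $v$-direction argument and the passage to $Q_{m,w}=0$ are routine adaptations of Lemma~\ref{l:partial} and elementary algebra.
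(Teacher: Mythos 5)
Your proposal is correct and follows essentially the same route as the paper: both characterize $\prox_{\gamma\varphi}(m,w)$ via the inclusion $(m-p,w-v)\in\gamma\partial\varphi(p,v)$, use Lemma~\ref{l:partial} together with the sum rule to split off $\partial\hat b$ and the normal cone $N_{[0,d]}$, reduce the inclusion to the scalar equation $Q_{m,w}(p,\delta)=0$ with $v=v_{m,w}(p,\delta)$, and organize the branches through the monotonicity in Lemma~\ref{l:properties} and the thresholds $m\lessgtr\gamma F'(0)$, $m\lessgtr d+\gamma F'(d)$. The only difference is presentational: you derive the formulas from the inclusion, whereas the paper exploits uniqueness of the prox to merely verify that each printed branch satisfies the inclusion; both are valid and rest on the same computations.
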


\begin{proof} Since the first assertion is clear from 
Lemma~\ref{l:partial}, we only prove \eqref{e:proxfinal}.
Let $(p,v)$ and $(m,w)$ in $\RR\times\RR^4$ such that 
$(p,v)=\prox_{\gamma\varphi}(m,w)$. 
It follows from \eqref{e:prox} that 
$(p,v)\in\dom\partial\varphi\subset]0,d]\times K\cup\{(0,0)\}$ 
and
\begin{equation}
\label{e:calc1}
(m-p,w-v)\in\gamma\partial\varphi(p,v).
\end{equation}
Since the solution of the previous inclusion is unique in terms 
of $(p,v)$, it is enough to check that 
\eqref{e:proxfinal} satisfies \eqref{e:calc1} for each case.

First note that $\gamma F'(0)\geq m$ and $m\leq d+\gamma F'(d)$ 
imply $(0,0)\in D(m)$ and $(d,0)\in D(m)$, respectively. We 
split our 
proof in three cases: $m\leq \gamma F'(0)$, $\gamma F'(0)<m\leq 
d+\gamma F'(d)$, and $m>d+\gamma F'(d)$. 
\vskip .3cm
$\bullet$ \underline{Case $m\leq \gamma F'(0)$:}
First suppose that 
 $Q_{m,w}(0,0)\geq 0$.
We have
\begin{align}
Q_{m,w}(0,0)\geq 0\quad
&\Leftrightarrow\quad 
(\gamma F'(0)-m)^{q-1}\geq 
\frac{1}{\gamma (q')^{q-1}}|P_Kw|^q,\nonumber\\
&\Leftrightarrow\quad 
\gamma F'(0)-m\geq 
\frac{\gamma^{1-q'}}{q'}|P_Kw|^{q'},
\end{align}
which, from Lemma~\ref{l:partial}, is equivalent to $(m-\gamma 
F'(0),w)\in\gamma \partial\hat{b}(0,0)$. Therefore, since 
$0\in \partial \iota_{[0,d]}(0)= \RR_{-}$, we obtain   
$(m-0,w-0)\in\gamma\partial\hat{b}(0,0)+\gamma 
\{F'(0)\}\times\{0\}+\gamma\partial 
\iota_{[0,d]}(0)\times\{0\}\subseteq \gamma\partial\varphi(0,0)$ 
and, hence, \eqref{e:calc1} holds with $p=v=0$.
Now suppose that $Q_{m,w}(0,0)<0< Q_{m,w}(d,0)$. 
Lemma~\ref{l:properties} ensures the existence and uniqueness of 
a strictly positive solution in $]0,d[$ to $Q_{m,w}(\cdot,0)=0$, 
which is 
called 
$p^*$. Let us set
\be\label{defvstar1}v^*:=v_{m,w}(p^*,0)=\displaystyle{\frac{p^*}{p^*+\gamma^{2/q}
(q')^{1-2/q}(p^*+\gamma F'(p^*)-m)^{1-2/q}}}P_Kw\ee
and let us prove that 
$(p^*,v^*)$ satisfies \eqref{e:calc1}. Indeed, since 
$p^*+\gamma F'(p^*)-m>\gamma F'(0)-m\geq 0$, $Q_{m,w}(p^*,0)=0$ 
is equivalent to 
\begin{equation}
\label{e:auxprox1}
m-p^*-\gamma F'(p^*)=-
\frac{\gamma}{q'}\left(\frac{|P_Kw|}{p^*+\gamma^{2/q}
	(q')^{1-2/q}(p^*+\gamma F'(p^*)-m)^{1-2/q}}\right)^q=-
	\frac{\gamma}{q'}\frac{|v^*|^q}{p^{*q}}
\end{equation}
and since $v^*\in K$ we have from  \eqref{defvstar1} that
\begin{align}
\label{e:auxprox2}
w&\in\displaystyle{\frac{p^*+\gamma^{2/q}
		(q')^{1-2/q}(p^*+\gamma 
		F'(p^*)-m)^{1-2/q}}{p^*}}v^*+N_K(v^*),\nonumber\\
\Leftrightarrow \quad 
w-v^*&\in\displaystyle{\frac{\gamma^{2/q}
		(q')^{1-2/q}(p^*+\gamma 
		F'(p^*)-m)^{1-2/q}}{p^*}}v^*+N_K(v^*),\nonumber\\
\Leftrightarrow\quad 
w-v^*&\in\gamma\frac{|v^*|^{q-2}v^*}{p^{*q-1}}+N_K(v^*),
\end{align}
where the last line follows from \eqref{e:auxprox1} and 
straightforward computations. Therefore, since 
$p^*\in]0,d[$, $N_{[0,d]}(p^*)=\{0\}$ and  from 
\eqref{e:auxprox1}, \eqref{e:auxprox2}, and  
Lemma~\ref{l:partial} we obtain
$$(m-p^*,w-v^*)\in \gamma\partial\hat{b}(p^*,v^*)+\gamma 
F'(p^*)\times\{0\}\subseteq \gamma\partial\varphi(p^*,v^*)$$ and 
\eqref{e:calc1} 
follows. Now suppose that $Q_{m,w}(d,0)\leq0$. Then, from 
Lemma~\ref{l:properties} there exists a unique $\delta^*\geq0$ 
such 
that 
$Q_{m,w}(d,\delta^*)=0$. Let us set $p^*=d$ and 
$v^*=v_{m,w}(d,\delta^*)$. 
In this case, $Q_{m,w}(d,\delta^*)=0$ is equivalent to 
\be
\label{e:aux2} 
m-d-\gamma 
F'(d)-\delta^*=-
\frac{\gamma}{q'}\left(\frac{|P_Kw|}{d+\gamma^{2/q}
	(q')^{1-2/q}(d+\gamma F'(d)-m+\delta^*)^{1-2/q}}\right)^q=-
\frac{\gamma}{q'}\frac{|v^*|^q}{d^{q}}
\ee
and since $\delta^*\in  N_{[0,d]}(d)=\RR_{+}$ as 
before we deduce 
\be
\label{e:aux2proxy} 
\displaystyle{m-d\in-
	\frac{\gamma}{q'}\frac{|v^*|^q}{d^{q}}+\gamma 
	F'(d)}+N_{[0,d]}(d).
\ee
On the other hand, by arguing analogously as in 
\eqref{e:auxprox2} we obtain
\begin{equation}
\label{e:calc22}
\displaystyle{w-v^*\in\gamma\frac{|v^*|^{q-2}v^*}{d^{q-1}}+N_K(v^*)}.
\end{equation}
Hence, from \eqref{e:aux2}, \eqref{e:aux2proxy}, and 
Lemma~\ref{l:partial} we obtain that \eqref{e:calc1} holds with 
$(p,v)=(d,v^*)$.
\vskip .3cm
$\bullet$ \underline{Case $\gamma F'(0)<m<d+\gamma F'(d)$:}
The difference with respect to the previous case is that 
$Q_{m,w}$ 
may be not defined at $(0,0)$. However, since $F$ is convex and 
lower 
semicontinuous, there exists $\prox_{\gamma F}m$, which is the 
unique solution of $z+\gamma 
F'(z)=m$. Thus, in this case, using that $F'$ is increasing, we 
get that $\prox_{\gamma F}m \in ]0,d[$ and that 
$Q_{m,w}(\prox_{\gamma F}m,0)=-\gamma|P_Kw|^q/q'\leq0$.
Now suppose that $Q_{m,w}(d,0)>0$. Then Lemma~\ref{l:properties} 
provides the existence of $p^*\in[\prox_{\gamma 
F}m,d[\subset]0,d[$ such that 
$Q_{m,w}(p^*,0)=0$. The verification of \eqref{e:calc1} for 
$(p^*,v(p^*,0))$
is analogous to the previous case since $p^*>0$. Otherwise, if 
$Q_{m,w}(d,0)\leq 0$, there exists a unique $\delta^*\geq0$ 
such that $Q(d,\delta^*)=0$ and, by setting $v^*=v(d,\delta^*)$ 
we 
can repeat the computation in \eqref{e:aux2} and the result 
follows.
\vskip .3cm
$\bullet$ \underline{Case $m\geq d+\gamma F'(d)$:} Defining
$\hat{\delta}=m-d-\gamma F'(d)\geq 0$, we have 
$Q_{m,w}(d,\hat{\delta})=
-\gamma|P_Kw|^q/q'\leq0$ and $(d,\delta)\in D(m)$ for every 
$\delta\geq \hat{\delta}$. Therefore, as before, there exists a 
unique $\delta^*\geq\hat{\delta}$ such that 
$Q_{m,w}(d,\delta^*)=0$. By setting $v^*=v(d,\delta^*)$ the 
result 
follows as in  the previous cases.
\end{proof}

In the absence of upper bound constraints for the $m$ the 
computations are simpler. We provide 
this simplified version for solving $(P_h)$ in the following 
corollary, whose proof 
is analogous to the proof of Proposition \ref{p:calprox} and so 
we omit it. Formally, the result can be seen as a limit case of 
Proposition \ref{p:calprox} when $d\to +\infty$.

\begin{corollary}
\label{p:calproxb}
Let $q\in\left]1,\pinf\right[$, let $\gamma>0$  and suppose that 
$F'(0):=\lim_{h\to 0^+}(F(h)-F(0))/h\in\RR$ exists. 
Moreover, set  $\varphi\colon(m,w)\mapsto F(m)+\hat{b}(m,w)$.  
Then, $\varphi$ is 
convex, proper, and lower semicontinuous and 
\begin{equation}
\label{e:proxfinalb}
\prox_{\gamma\varphi}\colon(m,w)\mapsto
\begin{cases}
(0,0),\quad&\text{if }m\leq\gamma 
F'(0)\:\:\text{and}\:\:Q_{m,w}(0,0)\geq0;\\
(p^*,v_{m,w}(p^*,0)),&\text{otherwise},
\end{cases}
\end{equation}
where $p^*\geq 0$ is the unique solution to 
$Q_{m,w}(p,0)=0$ and $v_{m,w}$ is defined in \eqref{e:defv}.
\end{corollary}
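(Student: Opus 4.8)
The plan is to mirror the proof of Proposition~\ref{p:calprox} line by line, deleting every occurrence of the upper bound $d$ and of the auxiliary variable $\delta$; the statement is literally the limit $d\to+\infty$ of that proposition. First I would settle the qualitative part: $\hat{b}$ is proper, convex and l.s.c.\ by Lemma~\ref{l:partial}, and $F$ is convex, l.s.c.\ and equal to $+\infty$ on $\RR_{--}$ by hypothesis, so $\varphi=F+\hat{b}$ inherits these properties and is proper since it is finite at $(1,0)$. Hence $\prox_{\gamma\varphi}$ is well defined, and by \eqref{e:prox} the pair $(p,v)=\prox_{\gamma\varphi}(m,w)$ is the \emph{unique} point with $(p,v)\in\dom\partial\varphi\subseteq(\RR_{++}\times K)\cup\{(0,0)\}$ satisfying $(m-p,w-v)\in\gamma\partial\varphi(p,v)$. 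Because $\hat{b}$ is finite and continuous at $(1,0)$, the sum rule gives $\partial\varphi(p,v)=\{(F'(p),0)\}+\partial\hat{b}(p,v)$ for $p>0$, and $\partial\hat{b}(0,0)=C$ at the origin. By uniqueness it then suffices to verify that the candidate supplied by \eqref{e:proxfinalb} solves the inclusion in each of the two listed cases.

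For the branch $m\le\gamma F'(0)$ I would argue exactly as in Proposition~\ref{p:calprox}. If $Q_{m,w}(0,0)\ge0$, the same equivalences give $Q_{m,w}(0,0)\ge0\Leftrightarrow\gamma F'(0)-m\ge\frac{\gamma^{1-q'}}{q'}|P_Kw|^{q'}$, which by the description of $C$ in Lemma~\ref{l:partial} means $(m-\gamma F'(0),w)\in\gamma\partial\hat{b}(0,0)$; adding the contribution $\gamma(F'(0),0)$ yields $(m,w)\in\gamma\partial\varphi(0,0)$, so $\prox_{\gamma\varphi}(m,w)=(0,0)$. If instead $Q_{m,w}(0,0)<0$, then $m\le\gamma F'(0)$ forces $\prox_{\gamma F}(m)=0$, so by Lemma~\ref{l:properties} the map $Q_{m,w}(\cdot,0)$ is continuous, strictly increasing on $[0,+\infty[$ and tends to $+\infty$; hence there is a unique $p^*>0$ with $Q_{m,w}(p^*,0)=0$, producing the ``otherwise'' branch.

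For the branch $m>\gamma F'(0)$ I would note that $\prox_{\gamma F}(m)$, the unique solution of $z+\gamma F'(z)=m$, lies in $\RR_{++}$ and that $Q_{m,w}(\prox_{\gamma F}(m),0)=-\gamma|P_Kw|^q/q'\le0$ by direct substitution. Lemma~\ref{l:properties} then furnishes a unique $p^*\ge\prox_{\gamma F}(m)>0$ with $Q_{m,w}(p^*,0)=0$, and since $p^*>0$ the verification coincides with the positive-$p$ verification of the previous branch. In either branch the real substance is to check that $(p^*,v^*)$, with $v^*=v_{m,w}(p^*,0)$, solves the inclusion: here $Q_{m,w}(p^*,0)=0$ rearranges, exactly as in \eqref{e:auxprox1}, into the $m$-coordinate $m-p^*-\gamma F'(p^*)=-\frac{\gamma}{q'}|v^*|^q/(p^*)^q$, while the definition \eqref{e:defv} of $v_{m,w}$ together with $v^*\in K$ gives, as in \eqref{e:auxprox2}, the $w$-coordinate $w-v^*\in\gamma|v^*|^{q-2}v^*/(p^*)^{q-1}+N_K(v^*)$; since $p^*>0$ we have $N_{\RR_+}(p^*)=\{0\}$, so Lemma~\ref{l:partial} delivers $(m-p^*,w-v^*)\in\gamma\partial\varphi(p^*,v^*)$.

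I expect the only genuine obstacle to be this last algebraic verification, namely matching the exponents $2/q$, $1-2/q$ and $q'$ appearing in $Q_{m,w}$ and in \eqref{e:defv} so that the scalar equation $Q_{m,w}(p^*,0)=0$ is truly equivalent to the $m$-coordinate of the inclusion and so that $v_{m,w}(p^*,0)$ carries exactly the $K$-projection structure forced by $N_K(v^*)$; everything else is monotonicity bookkeeping from Lemma~\ref{l:properties}. As an independent cross-check I would observe that the whole statement also drops out of Proposition~\ref{p:calprox} by letting $d\to+\infty$: since $\lim_{p\to+\infty}Q_{m,w}(p,0)=+\infty$, for every sufficiently large $d$ one has $Q_{m,w}(d,0)>0$, so the third branch of \eqref{e:proxfinal} and the variable $\delta^*$ never activate, and the first two branches collapse precisely to \eqref{e:proxfinalb}.
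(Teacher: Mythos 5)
Your proposal is correct and follows exactly the route the paper intends: the paper omits this proof, stating it is analogous to that of Proposition~\ref{p:calprox} (formally the limit $d\to+\infty$), and your line-by-line adaptation — dropping $\delta$ and the $[0,d]$ indicator, splitting on $m\lessgtr\gamma F'(0)$, and verifying the inclusion via the analogues of \eqref{e:auxprox1}--\eqref{e:auxprox2} with $N_{\RR_+}(p^*)=\{0\}$ — is precisely that argument. The closing cross-check that $Q_{m,w}(d,0)>0$ for $d$ large, so the third branch of \eqref{e:proxfinal} never activates, matches the paper's own remark.
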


\begin{remark}
	Note that, in the particular case when $q=2$, $F'\equiv0$, 
	and 
	$K=\RR^4$, the computation of the proximity operator in 
	Corollary~\ref{p:calproxb} reduces 
	to 
	that of {\rm\cite{Peyre}}.
\end{remark}
Another important case to be considered is when the function 
$F'$ satisfies \eqref{derivadamenosinfty}, in which case the 
computation is also simpler.
Since the proofs can be derived from the proof of 
Proposition~\ref{p:calprox} we omit them.
\begin{corollary}
	\label{c:finalprox}
Let $q\in\left]1,\pinf\right[$, let $d>0$, let $\gamma>0$ and 
suppose that 
$F'(0):=\lim_{h\to 0^+}(F(h)-F(0))/h\ =-\infty$. Define 
  $\varphi\colon(m,w)\mapsto 
F(m)+\hat{b}(m,w)+\iota_{[0,d]}(m)$.  
Then, $\varphi$ is 
convex, proper, lower semicontinuous, and 
\begin{equation}
\label{e:proxfinalc}
\prox_{\gamma\varphi}\colon(m,w)\mapsto
\begin{cases}
(p^*,v_{m,w}(p^*,0)),&\text{if }
m< d+\gamma 
F'(d)\:\:\text{and}\:\:\:Q_{m,w}(d,0)>0;\\
(d,v_{m,w}(d,\delta^*)),&\text{otherwise},
\end{cases}
\end{equation}
where $p^*> 0$ and $\delta^*\geq0$ are the unique solutions to 
$Q_{m,w}(p,0)=0$ and $Q_{m,w}(d,\delta)=0$, respectively, and 
$v_{m,w}$ is defined in \eqref{e:defv}. On the other hand, 
defining $\phi\colon(m,w)\mapsto 
F(m)+\hat{b}(m,w)$, we have that 
\begin{equation}
\label{e:proxfinald}
\prox_{\gamma\phi}\colon(m,w)\mapsto (p^*,v_{m,w}(p^*,0)), 
\end{equation}
where $p^*> 0$ is the unique solution of $Q_{m,w}(p,0)=0$.
\end{corollary}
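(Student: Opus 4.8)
The plan is to follow the three-case verification in the proof of Proposition~\ref{p:calprox} almost verbatim, the only structural change being that the hypothesis $F'(0)=-\infty$ eliminates the branch producing $(0,0)$. Convexity, properness and lower semicontinuity of $\varphi$ (and of $\phi$) are obtained exactly as before: $\hat b$ is convex, proper and l.s.c. by Lemma~\ref{l:partial}, $F$ is convex and l.s.c. by hypothesis, $\iota_{[0,d]}$ is the indicator of a nonempty compact convex set, and the three functions share interior domain points, so their sum inherits these properties.

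First I would show that, regardless of $(m,w)$, the first coordinate of $\prox_{\gamma\varphi}(m,w)$ is strictly positive. By \eqref{e:prox}, if $(p,v)=\prox_{\gamma\varphi}(m,w)$ then $(m-p,w-v)\in\gamma\partial\varphi(p,v)$, so $\partial\varphi(p,v)\neq\varnothing$. I claim this forces $p>0$. Indeed, if $v\neq0$ then $\hat b(0,v)=\pinf$, so $(0,v)\notin\dom\varphi$ and $\partial\varphi(0,v)=\varnothing$; and at $(0,0)$ any subgradient would in particular provide $\xi\in\RR$ with $F(m')\geq F(0)+\xi m'$ for all $m'>0$, whence $\xi\leq (F(m')-F(0))/m'\to F'(0)=-\infty$, a contradiction. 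Hence $\partial\varphi(0,v)=\varnothing$ for every $v$, and therefore $p>0$. The same computation shows $\partial F(0)=\varnothing$, so $\prox_{\gamma F}m$, the unique solution of $z+\gamma F'(z)=m$, is strictly positive; consequently $Q_{m,w}(\cdot,0)$ is well defined and, by Lemma~\ref{l:properties}, continuous and strictly increasing on $[\prox_{\gamma F}m,+\infty[\,\subset\,]0,+\infty[$.

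With $p>0$ secured, I would run the case split of Proposition~\ref{p:calprox} with $\gamma F'(0)=-\infty$, so that the condition ``$m\leq\gamma F'(0)$'' is vacuous and only the two regimes $m<d+\gamma F'(d)$ and $m\geq d+\gamma F'(d)$ survive. In the former, $\prox_{\gamma F}m\in\,]0,d[$ and $Q_{m,w}(\prox_{\gamma F}m,0)=-\gamma|P_Kw|^q/q'\leq0$; if moreover $Q_{m,w}(d,0)>0$, Lemma~\ref{l:properties} yields a unique $p^*\in[\prox_{\gamma F}m,d[$ with $Q_{m,w}(p^*,0)=0$, and since $N_{[0,d]}(p^*)=\{0\}$ the verification that $(p^*,v_{m,w}(p^*,0))$ satisfies \eqref{e:calc1} is word-for-word \eqref{e:auxprox1}--\eqref{e:auxprox2}. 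Otherwise (either $m<d+\gamma F'(d)$ with $Q_{m,w}(d,0)\leq0$, or $m\geq d+\gamma F'(d)$), Lemma~\ref{l:properties} gives a unique $\delta^*\geq0$ with $Q_{m,w}(d,\delta^*)=0$, and using $N_{[0,d]}(d)=\RR_+$ the pair $(d,v_{m,w}(d,\delta^*))$ satisfies \eqref{e:calc1} exactly as in \eqref{e:aux2}--\eqref{e:calc22}. This establishes \eqref{e:proxfinalc}. For $\phi=F+\hat b$ one simply drops the box constraint: there is no upper bound $d$, the boundary subcase disappears, and $\prox_{\gamma F}m>0$ together with $Q_{m,w}(p,0)\to\pinf$ as $p\to\pinf$ gives a unique positive root $p^*$ of $Q_{m,w}(\cdot,0)$, yielding \eqref{e:proxfinald}.

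The only genuinely new point relative to Proposition~\ref{p:calprox}---hence the step I would treat most carefully---is the strict positivity of $p$: once $\partial\varphi(0,\cdot)=\varnothing$ is established, the $(0,0)$ alternative of \eqref{e:proxfinal} cannot occur, and the remainder is a transcription. I do not expect any real obstacle beyond using the limit $F'(0)=-\infty$ correctly to rule out a subgradient at the origin.
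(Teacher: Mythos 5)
Your proposal is correct and takes essentially the same route the paper intends: the paper omits the proof, stating only that it can be derived from that of Proposition~\ref{p:calprox}, and your argument is exactly that derivation, with the one genuinely new point --- ruling out the $(0,0)$ branch because $F'(0)=-\infty$ forces $\partial\varphi(0,v)=\varnothing$ for every $v$, hence $p>0$ and $\prox_{\gamma F}m>0$ --- identified and handled correctly. The remaining case analysis (only the regimes $m<d+\gamma F'(d)$ and $m\geq d+\gamma F'(d)$ survive, verified via \eqref{e:auxprox1}--\eqref{e:auxprox2} and \eqref{e:aux2}--\eqref{e:calc22}) is the transcription the authors had in mind.
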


\begin{remark}
Note that in Corollary~\ref{c:finalprox} we ensure that the 
strict positivity of the first coordinate of the proximal 
mapping associated to the objective function, which cannot be 
guaranteed neither in Proposition \ref{p:calprox} nor Corollary 
\ref{p:calproxb}. 
\end{remark}

\begin{remark}
Since Proposition~\ref{p:calprox} gives a closed expression for 
$\prox_{\gamma\varphi}$, an advantage of the last three methods 
in Section~\ref{sec:5.meth}
respect to ADMM in our setting, is that each step of the 
algorithm is 
computable for every differentiable function $F(x_{i,j},\cdot)$. 
\end{remark}

\section{Numerical experiments}
\label{sec:num}
In the following, we present numerical tests aiming at illustrating the different features of the proposed schemes as well as assessing both their performance and accuracy in the setting of the MFG system \eqref{mfgestacionario}. For the sake of simplicity, we shall use the following abbreviations to refer to the implemented algorithms.
\begin{itemize}
\item {\bf ADMM}: Alternating direction method of multipliers, as in Section \ref{admm}.
\item {\bf CP-U}: Chambolle-Pock algorithm without splitting, as in Section \ref{cpu}.
\item {\bf PCPM-U}:  Predictor-corrector proximal multiplier method without splitting, as in Section \ref{pcpmu}.
\item {\bf MS-U}: Monotone+skew without splitting, as in Section \ref{msu}.
\item {\bf CP-SP}: Chambolle-Pock algorithm with splitting and projected on the mass constraint, as in Section \ref{cpsp}.
\item {\bf MS-SP}: Monotone+skew with splitting and projected on the mass constraint.
\item {\bf PCPM-SP}: Predictor-corrector proximal multiplier method with splitting and projected on the mass constraint.
\end{itemize}
\paragraph{Implementation and parametric choices.}
The starting point of our numerical implementation is the finite difference discretization presented in section \ref{prelim}. Once the discretized operators have been assembled, we proceed to implement the optimization algorithms derived in section \ref{als}. We highlight the simplicity of the proposed methods, as the inner loops only requires the solution of nonlinear scalar equations and matrix inversions. However, if the number of degrees of freedom increases, as in the time-dependent MFG setting, one needs to resort to preconditioning, as already discussed in \cite{MR2928376,Andreev16}. However, the \textbf{-SP} versions of the algorithms, i.e., with splitting and projection on the mass constraint, do not  require matrix inversion.  The nonlinear equations related to the proximal operator calculation are solved separately for every gridpoint based on sequential information, and therefore they are fully parallelizable, a property which we exploit in our code.
Each optimization algorithm presented in section \ref{als} has a set of parameters to be set offline. Our choice of parameters falls within  the prescribed parametric bounds guaranteeing convergence. For instance in Theorem \ref{t:CPockproj}, the Chambolle-Pock algorithm requires $\gamma\tau\|\Xi\|^2<1$. Although we observe that choices of $\gamma\,,\tau$ violating this condition can lead to faster convergence, the accuracy and stability of the algorithm deteriorates. 
\noindent The optimization routines are stopped when the norm of the difference between the primal variables of two consecutive iterations  has reached the threshold $\|(m^{n+1},w^{n+1})-(m^n,w^n)\|\leq\frac15h^3\,,$ where $h$ is the mesh parameter of the finite difference approximation, in order to ensure that the numerical error of the discretization does not interfere with the stopping rule of the iterative loop.

\paragraph{Test 1: assessing accuracy and convergence.} In order to assess the accuracy and performance of the proposed algorithms we study a first test case proposed in \cite{gomesex}. We consider the first-order stationary MFG system
$$\ba{rcl}
	 \half |\nabla u |^2 -\lambda &=&  \log m - \sin(2\pi x)- \sin (2\pi y), \\[4pt]
	 \mbox{div}(m\nabla u)&=& 0,  \; \; \; \int_{\TT^2} m \dd x=1, \; \; \; \int_{\TT^2} u \dd x=0, \ea$$
with explicit solution 
\begin{equation}\label{exactat1}
u(x,y)= 0, \; \; \; m(x,y)= e^{\sin(2\pi x)+\sin (2\pi y)-\lambda}\,,\;\;\;\lambda=\log\left(\int_{\TT^{2}}e^{\sin(2\pi x)+\sin (2\pi y)} \dd x \dd y\right)\,.
\end{equation}
\begin{figure}[!ht]
\centering
\includegraphics[width=0.49\textwidth]{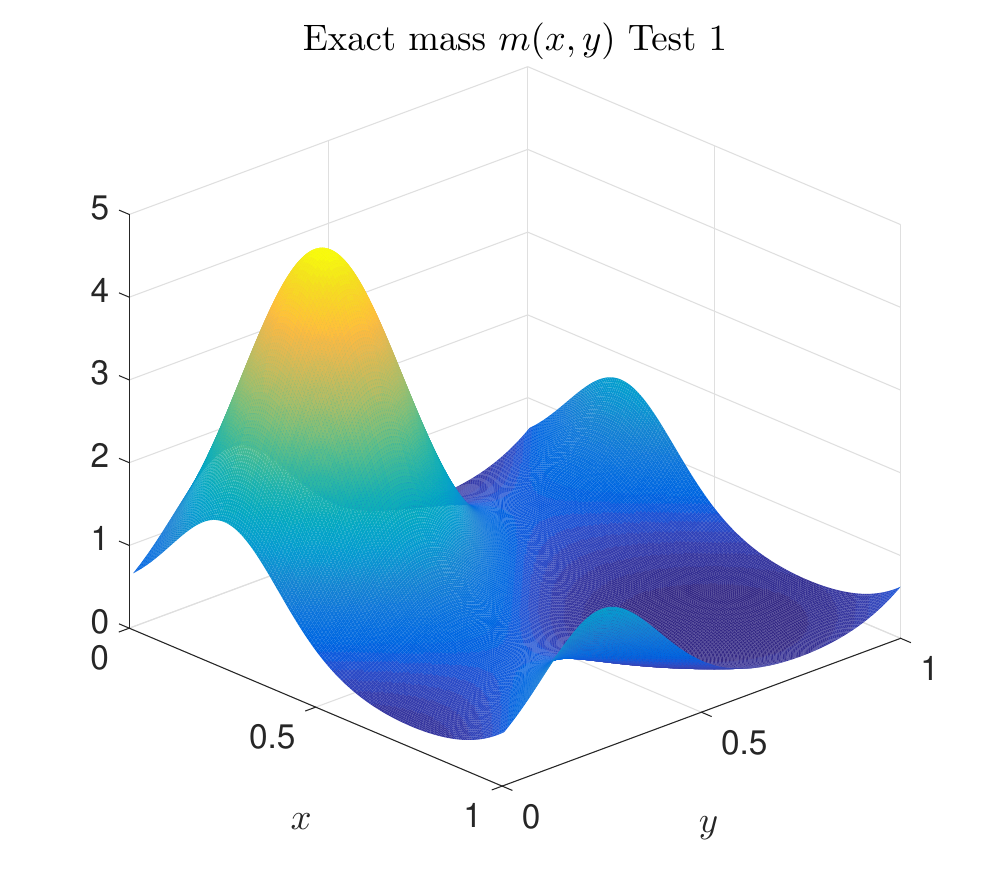}
\includegraphics[width=0.49\textwidth]{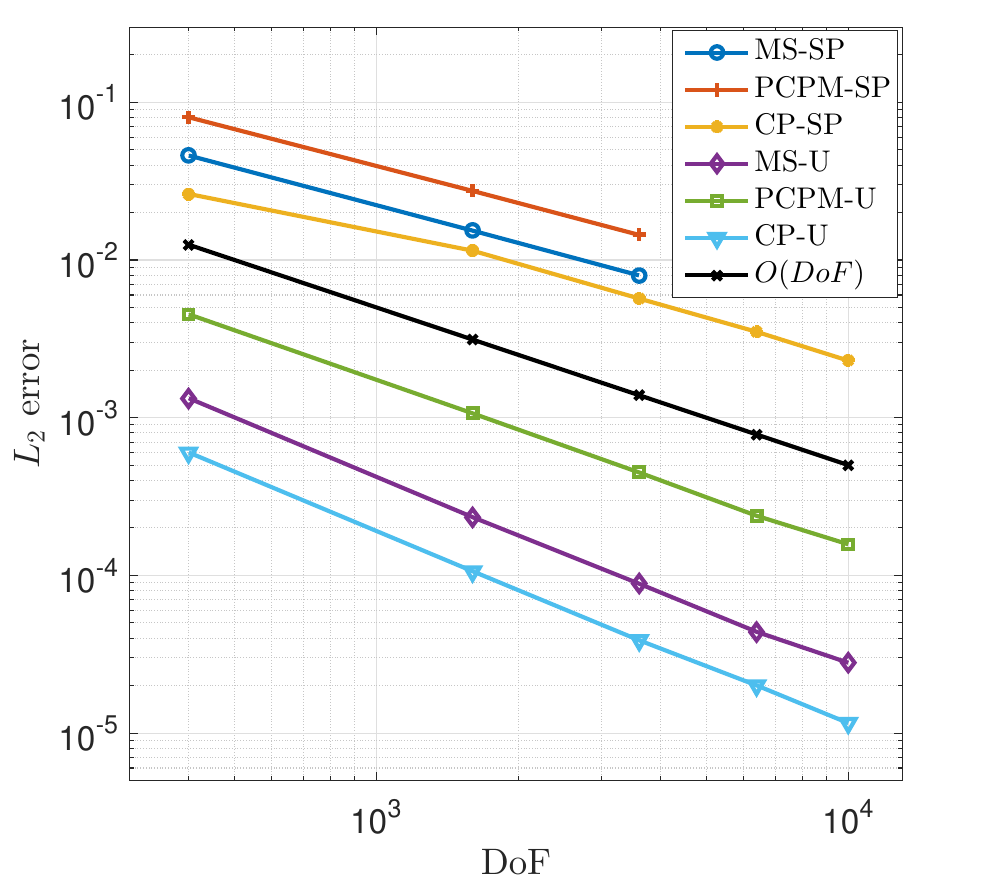}
\vskip 5mm
\includegraphics[width=0.49\textwidth]{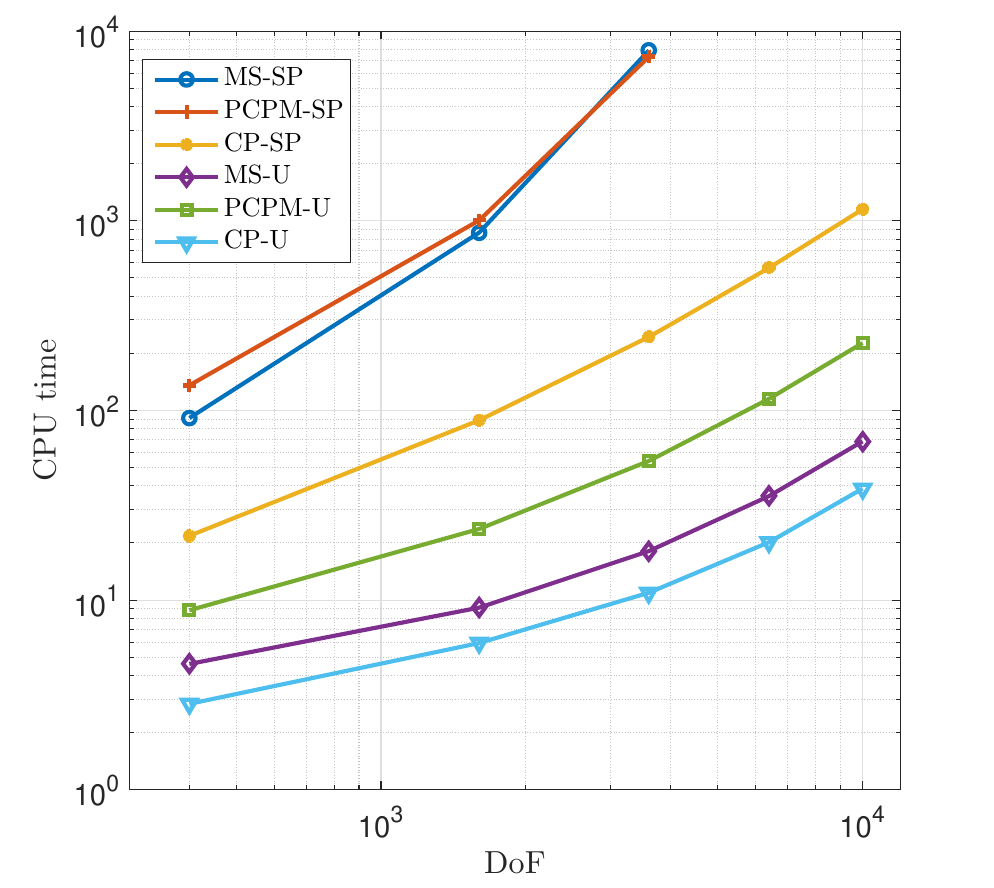}
\includegraphics[width=0.49\textwidth]{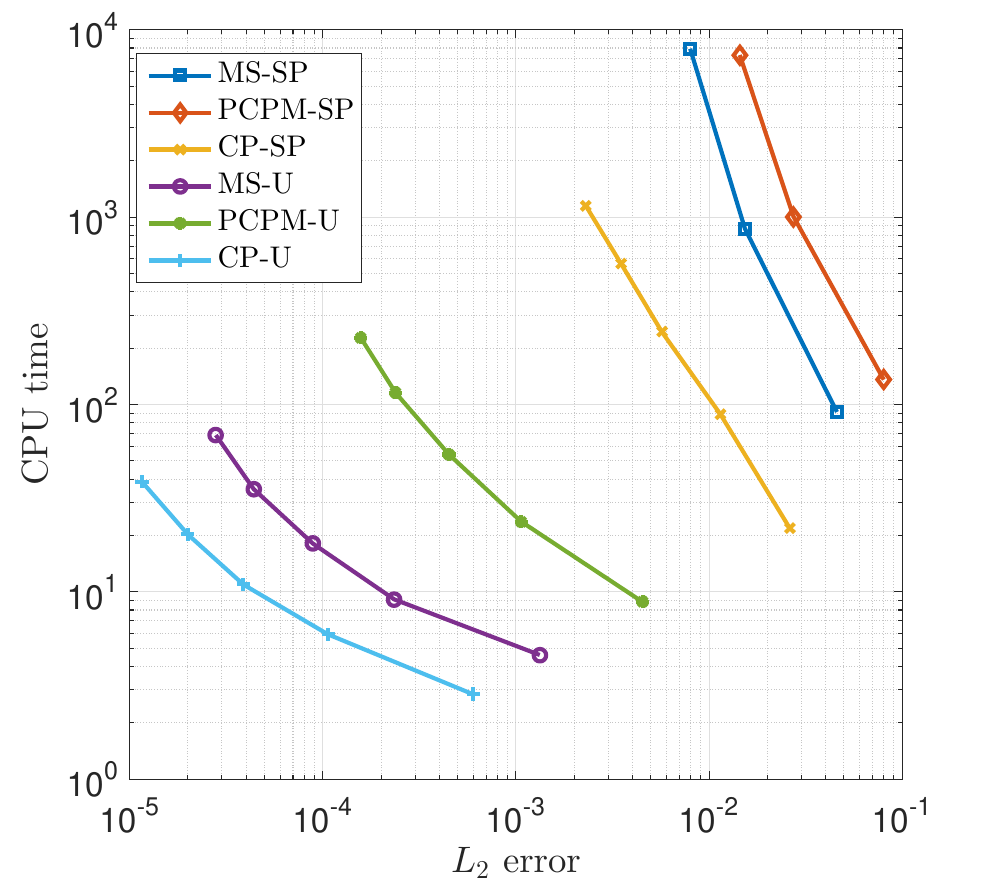}
\caption{ Test 1. Top left: exact mass $m(x,y)$ as in \cite{gomesex}. Top right: convergence rates for the proposed schemes, first-order convergence with respect to the number of nodes is achieved in all the cases. Bottom left: performance plots, degrees of freedom vs. CPU time, the Chambolle-Pock is the fastest algorithm for a fixed mesh parameter $h$. Bottom right: efficiency plots, error vs. CPU time, the Chambolle-Pock algorithm is consistently the most efficient implementation.}\label{fig:test1}
\end{figure}

\noindent In this test, we study the behavior of all the proposed algorithms, for different discretization parameters $h$ and the related number of degrees of freedom $DoF=1/h^2$, both in their unsplit and split versions. Results presented in Figure \ref{fig:test1} indicate that although all the algorithms achieve the same convergence rate in $h$, measured in the $L^2$ norm between the last discrete iteration and the exact solution \eqref{exactat1}, the unsplit versions have smaller error. More importantly, when comparing CPU time (or number of iterations) against $L^2$ errors, unsplit algorithms perform considerably better. However, split algorithms are still competitive and provide a reliable way to approximate the solution without performing any matrix inversion. Overall, the Chambolle-Pock algorithm exhibits the best performance and accuracy in both unsplit and split versions. We shall stick to this choice in the following tests.

\paragraph{Test 2: comparing with the ADMM algorithm.}
This second text is based on the recent work by \cite{benamoucarlier15}, where an implementation of the ADMM algorithm is presented for MFG and optimal transportation problems. We compare the performance of the ADMM and the CP-U methods for different discretization parameters and viscosity values $\nu$. For this, the system \eqref{mfgestacionario} is cast with ${\color{red}F}(x,y,m)=\frac12(m-\bar{m}(x,y))^2$ and $q=2$, where $\bar{m}(x,y)$ is a Gaussian profile as depicted in Figure \ref{fig:test2}. In the case $\nu=0$, since our reference $\bar{m}$ is already of mass equal to 1, the exact solution for this problem is given by $u\equiv 0$, $\lambda=0$, and $m=\bar{m}(x)$, and a convergence analysis with respect to this solution is presented in Table \ref{tabtest21}. From this same table, it can be seen that for different discretization parameters the CP-U algorithm converges to solutions of the same accuracy in a reduced number of iterations. For a fixed discretization, and with varying small viscosity values, the same conclusion is reached in Figure \ref{fig:test2}. However, as viscosity increases, the ADMM algorithm yields faster computation times than the CP-U implementation (see Table \ref{tabtest22}). 

\begin{figure}[!ht]
\centering
\includegraphics[width=0.49\textwidth]{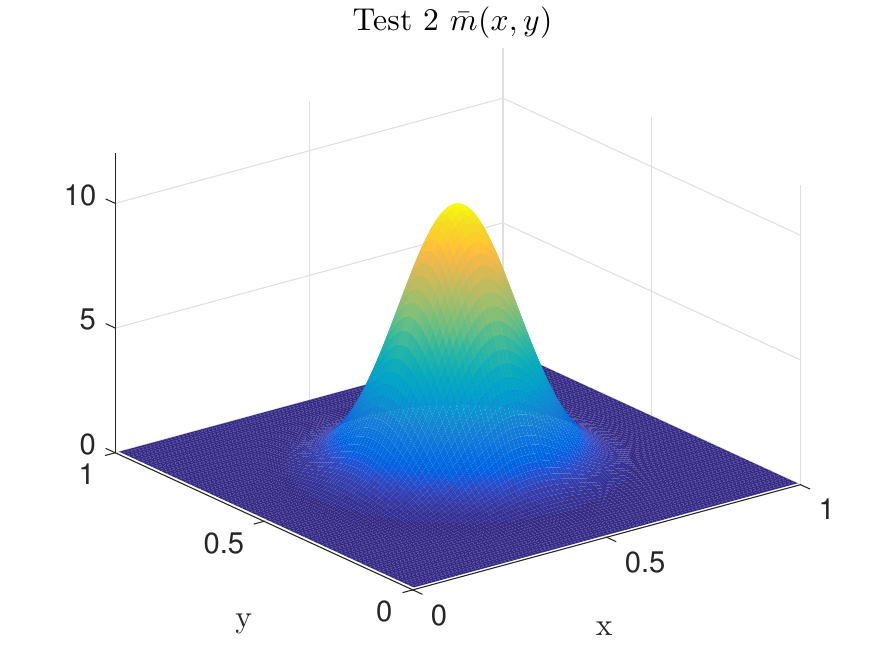}
\includegraphics[width=0.49\textwidth]{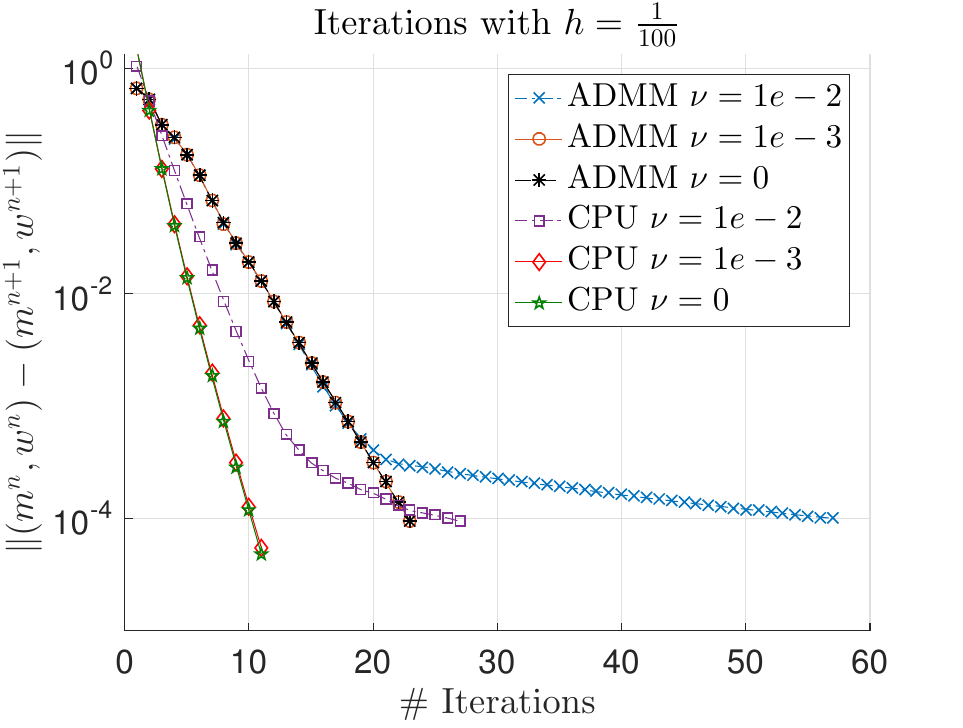}

\caption{Test 2. Left: reference mass $\bar{m}(x,y)$. Right: iterative behavior of different schemes, for mesh parameter $h=\frac{1}{100}$ and different values of $\nu$. The unsplit Chambolle-Pock algorithm (CP-U) outperforms the ADMM algorithm for low values of $\nu$. }\label{fig:test2}
\end{figure}

\begin{table}[h]
\centering
\setlength{\tabcolsep}{1mm}
  \begin{tabular}{cp{1.7cm}p{1.9cm}p{1.7cm}p{1.7cm}p{1.9cm}p{1.7cm}}
  \hline\\
  & \multicolumn{3}{c}{ADMM}&  \multicolumn{3}{c}{CP-U} \\
\cmidrule(lr){2-4} \cmidrule(lr){5-7}\\
   DoF& Time&   Iterations & $L_2$ error& Time&   Iterations & $L_2$ error\\
   \cmidrule(lr){1-1}  \cmidrule(lr){2-4} \cmidrule(lr){5-7}\\
    $20^2$   &1.6 [s] &    15 &   5.42E-4 &0.4 [s]&    4 &   1.10E-4 \\
    $40^2$ &3.7 [s]&    19 &  8.44E-5  &0.9 [s] &    6 &  9.44E-5\\
    $60^2$&21.2 [s] & 21 &   8.16E-5 &7.0 [s] &   8 &  9.15E-5\\
    $80^2$&33.2 [s] & 22 &   7.92E-5 &10.2 [s] &   9 &  8.99E-5\\
    $100^2$&87.41 [s] & 23 &   7.35E-5 &30.3 [s] &   11 &  7.04E-5\\
    \hline 
  \end{tabular}
  \caption{Test 2. Different tests with varying number of grid nodes (DoF). Case with $\nu=0$, exact solution $m=\bar{m}(x,y)$. For a similar accuracy, the CP-U algorithm has a reduced number of iterations in comparison to the ADMM routine.}\label{tabtest21}
\end{table}
\begin{table}[h]
\centering
\setlength{\tabcolsep}{1mm}
  \begin{tabular}{ccccc}
  \hline\\
  & \multicolumn{2}{c}{ADMM}&  \multicolumn{2}{c}{CP-U} \\
\cmidrule(lr){2-3} \cmidrule(lr){4-5}\\
   $\nu$& Time&   Iterations & Time&   Iterations \\
   \cmidrule(lr){1-1}  \cmidrule(lr){2-3} \cmidrule(lr){4-5}\\
    1   &8.4 [s] &    16 &17.5 [s]&    46 \\
    0.1 &30.4[s]&    31 &65.6 [s] &    73\\
    1E-2&26.4 [s] & 27 &9.8 [s] &   11\\
    1E-3&21.3 [s] & 21 &7.3 [s] &   8\\
    0&21.2 [s] & 21 &7.0 [s] &   8\\
    \hline 
  \end{tabular}
  \caption{Test 2. Different tests with varying viscosity parameter $\nu$. Discretization parameter $h=\frac{1}{60}$. The ADMM algorithm performs better for higher viscosity values, the CP-U algorithm is consistently faster for low viscosities.}\label{tabtest22}
\end{table}

\newpage
\paragraph{Test 3: adding density constraints.}
The following test mimics the setting presented in \cite{AchdouCapuzzo10}, with $q=2$ and
$$
f(x,y,m)=m^2-\bar{H}(x,y)\,,\quad \bar{H}(x,y)=\sin(2\pi y) + \sin(2\pi x) + \cos(4\pi x)\,.
$$
The purpose of this test is twofold. First, in the unconstrained mass case, we reproduce the results presented in \cite{AchdouCapuzzo10} and in \cite{CaCa16}. As shown in Table \ref{tabtest31} (left), we recover the same values  for $\lambda$ reported in the aforementioned references. The CP-U algorithm performs consistently well for different viscosity values and reaches convergence after a reduced number of iterations. Computational times are comparable to those reported in \cite{CaCa16}, considering that the CP-U is a first order method.
Next, we perform similar tests but including an upper bound on the mass, 
$$m(x,y)\leq d(x,y):= \mathcal{I}_R(x,y)+(1-\mathcal{I}_R(x,y))\bar d\,,\quad \mathcal{I}_R(x,y):= \begin{cases} 
      1 & x^2+y^2\leq R^2 \\
      0 & \text{otherwise}
   \end{cases}\,,\quad \bar d=1.3,R=0.25\,.$$  Figure \ref{fig:test3} illustrate the effectiveness of our approach, as solutions vary from the unconstrained case in order to satisfy both the MFG system and the additional constraint. The inclusion of mass constraints generate plateau areas where the constraint is active. In Table \ref{tabtest31} (right), we observe that the scheme does not deteriorate its performance in the constrained formulation, leading to convergence in a similar number of iterations as in the unconstrained case.

\begin{table}[h]
\centering
\setlength{\tabcolsep}{1mm}
  \begin{tabular}{cp{1.7cm}p{1.9cm}p{1.7cm}p{1.7cm}p{1.9cm}}
  \hline\\
  & \multicolumn{3}{c}{Unconstrained mass}&  \multicolumn{2}{c}{Constrained mass $m\leq d$} \\
\cmidrule(lr){2-4} \cmidrule(lr){5-6}\\
   $\nu$& Time&   Iterations & $\lambda$& Time&   Iterations\\
   \cmidrule(lr){1-1}  \cmidrule(lr){2-4} \cmidrule(lr){5-6}\\
    1   &6.82 [s]  & 11 &   0.9786 &46.65 [s]&    51  \\
    0.1 &13.26 [s]  & 27 &  1.100  &13.81 [s] &    24\\
    1E-2&34.62 [s] & 78 &   1.1874 &29.09 [s] &   56\\
    1E-3&22.88 [s] & 84 &   1.1922 &27.87 [s] &   56\\
    \hline 
  \end{tabular}
  \caption{Test 3. Performance for the CP-U algorithm in \cite{AchdouCapuzzo10} with different viscosity parameter $\nu$, and upper bound on the mass, $m\leq d$. $f(x,y,m)=m^2-\bar H(x)$. Mesh parameter is set to $h=1/50$. The results for the unconstrained case are in accordance, in accuracy with the values  for $\lambda$ presented in \cite{CaCa16}. Our scheme performs robustly with respect to the viscosity parameter and the inclusion of mass constraints.}\label{tabtest31}
\end{table}

\begin{figure}[!ht]
\centering
\includegraphics[width=0.49\textwidth]{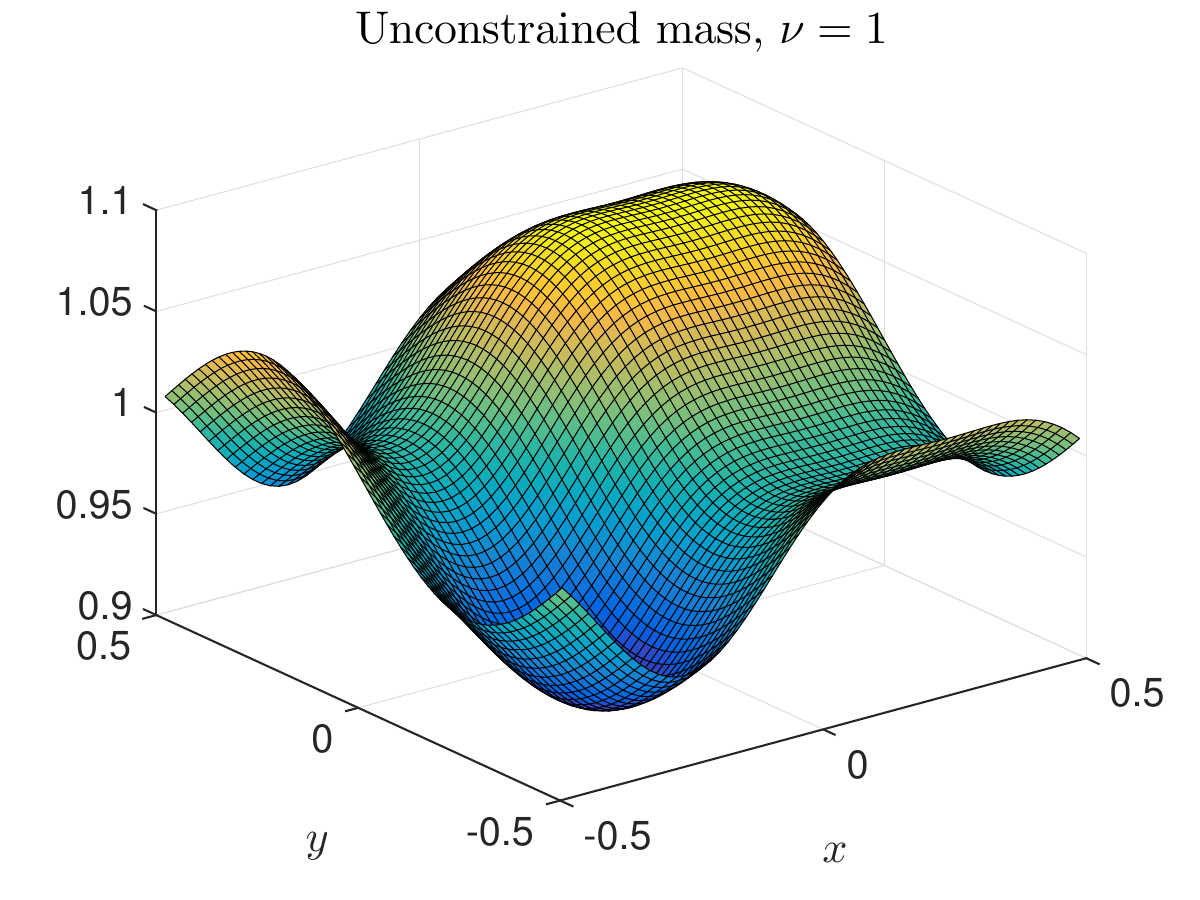}
\includegraphics[width=0.49\textwidth]{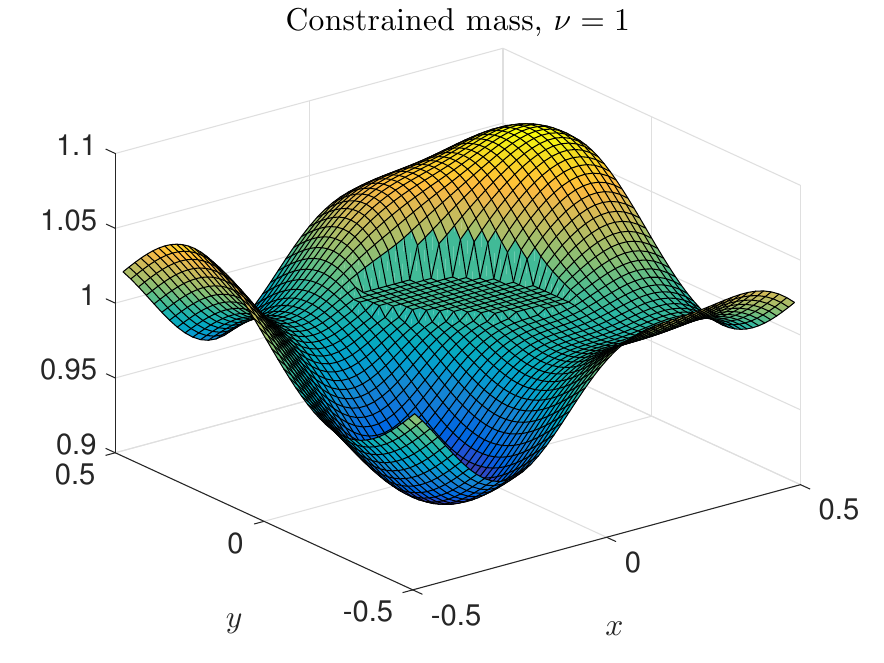}
\vskip 5mm
\includegraphics[width=0.49\textwidth]{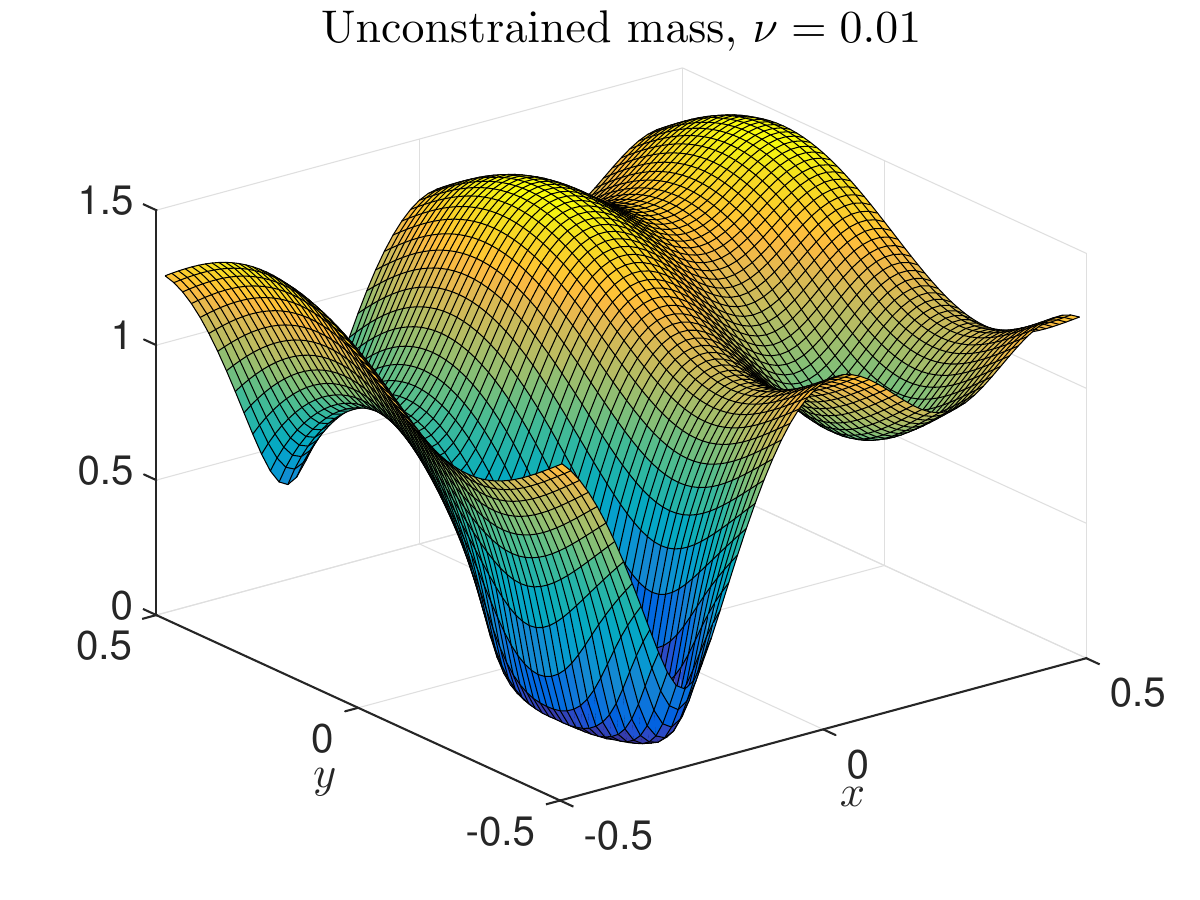}
\includegraphics[width=0.49\textwidth]{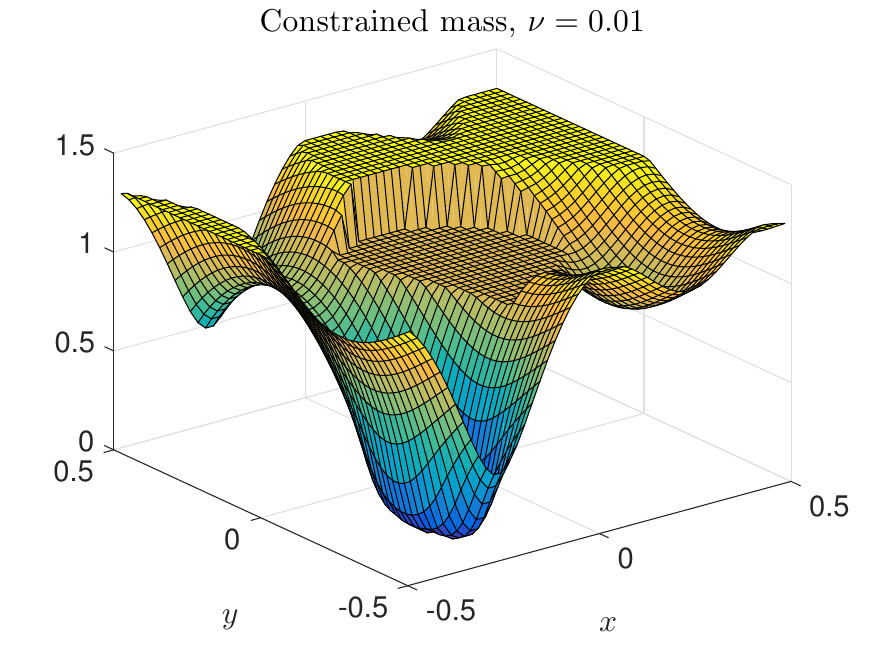}

\caption{Test 3. Left: unconstrained mass solutions for different viscosity parameters. Right: constrained mass solutions for different viscosity parameters. In both cases the upper bound is selected in order to be active, thus generating a plateau of constant mass.}\label{fig:test3}
\end{figure}

\paragraph{Test 4: MFG with $q\neq 2$.} In this last test, we further explore the versatility of the proposed framework by considering the same setting as in Test 3 in the unconstrained case with $\nu=1$, but with different values of  $q>1$. Results are presented in Figure \ref{fig:test4}. In general, it can be observed that the performance of the CP-U method remains unaltered, and solutions tend to be uniform when $q$ is close to 1, whereas increasing $q$ leads to sharper solutions with higher extremal values, as shown in Table \ref{tabtest41}.

\begin{figure}[!ht]
\centering
\includegraphics[width=0.49\textwidth]{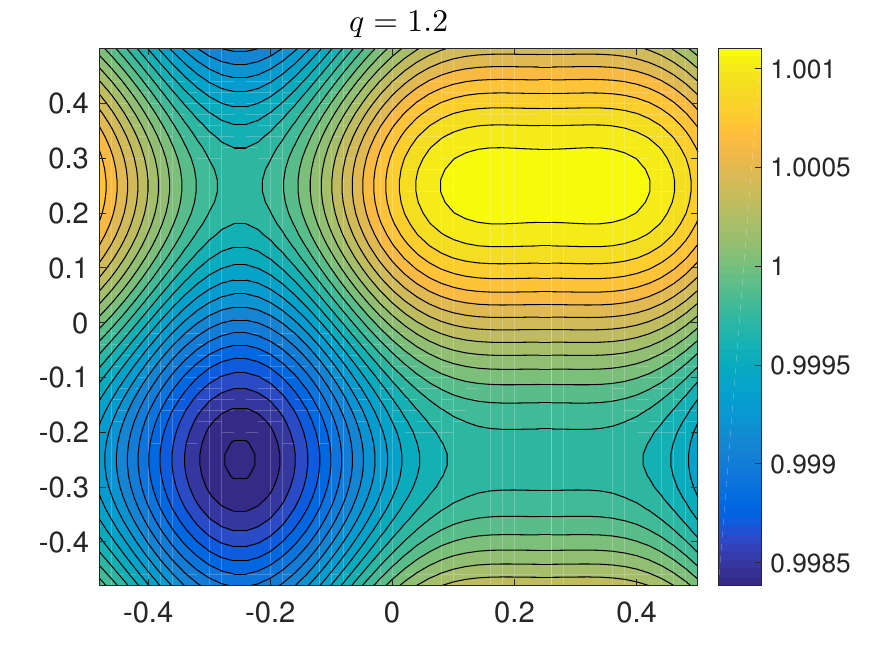}
\includegraphics[width=0.49\textwidth]{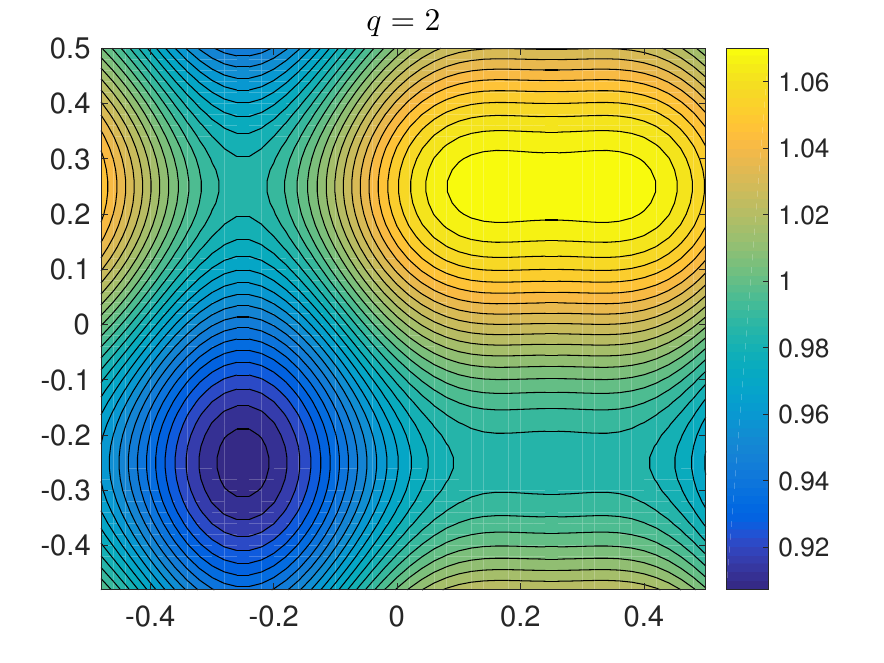}
\vskip 5mm
\includegraphics[width=0.49\textwidth]{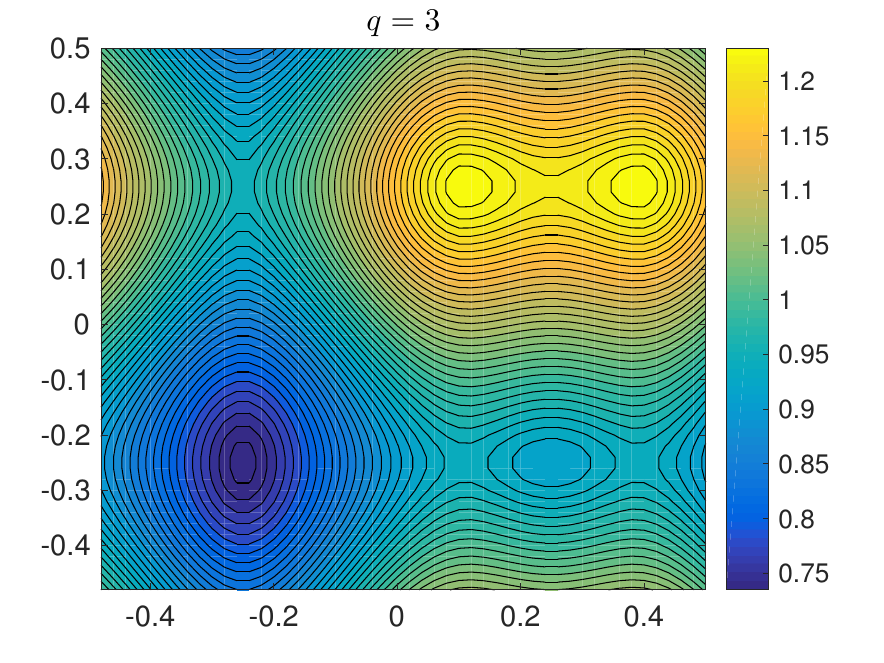}
\includegraphics[width=0.49\textwidth]{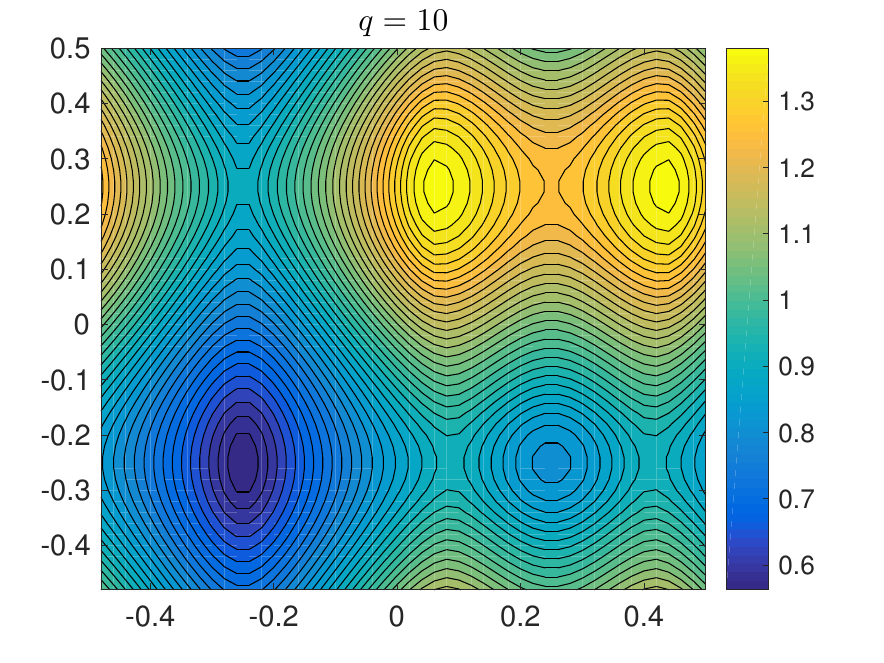}

\caption{Test 4. Contour plots for the unconstrained mass as in the setting of Test 3, with $\nu=1$. CP-U algorithm with $50^2$ nodes. Increasing the value of $q$ generates concentration of mass.}\label{fig:test4}
\end{figure}

\begin{table}[h]
\centering
\setlength{\tabcolsep}{1mm}
  \begin{tabular}{cp{1.7cm}p{1.9cm}p{1.7cm}p{1.7cm}}
  \hline\\
   $q$& Time&   Iterations & $\min m$&$\max m$\\
\hline\\
    1.2   &6.82 [s]  & 11 &   0.9989 &1.0012 \\
    2 &6.70 [s]   & 11 &  0.9072  &1.0737\\
    3&10.57 [s] & 21 &   0.7348 &1.2365\\
    10&24.66 [s] & 57 &  0.5628 &1.3905\\
    \hline 
  \end{tabular}
  \caption{Test 4. Performance for the CP-U algorithm and extremal values of the mass.}\label{tabtest41}
\end{table}

\paragraph{Concluding Remarks.} In this work we have developed proximal methods for the numerical approximation of stationary Mean Field Games systems. The presented schemes perform efficiently in a series of different tests. In particular, the solution through the Chambolle-Pock algorithm is promising in terms of performance, robustness with respect to the viscosity parameter, and accuracy. A natural extension of this work is its application for the approximation of time-dependent case, and the further study of the different features of the approach, which allows constraints on the mass and the modeling of congested transport.

\paragraph{Acknowledgments.} The first author thanks the support of 
CONICYT through grants FONDECYT 11140360, 
MathAmSud 15MATH02, and ECOS-CONICYT C13E03. The second author thanks 
the support of 
ERC-Advanced Grant OCLOC:{\it``From Open-Loop to Closed-Loop Optimal 
Control of PDEs"}. The third author thanks the  support from  project 
iCODE  :{\it``Large-scale systems and Smart grids: distributed 
decision making''} and   from the Gaspar Monge Program for 
Optimization and Operation Research (PGMO). 

\bibliographystyle{plain}
\bibliography{cursocontrolestocastico}

\section*{Appendix}
\subsection{Proof of Theorem~\ref{t:CPockproj}}
\begin{proof} 
In order to simplify the proof we will consider the case 
$\theta=1$ 
as in \cite{CPock11}.
Fix $k\in\NN$, let $\hat{y}\in C$ and $\hat{\sigma}\in\RR^M$ be 
a 
primal-dual solution to \eqref{e:primal}-\eqref{e:dual}.
It follows from $P_C\hat{y}=\hat{y}$ and \eqref{e:caract} that
\begin{align}
\label{e:auxili}
\frac{|y^{k+1}-\hat{y}|^2}{2\tau}+
\frac{|\sigma^{k+1}-\hat{\sigma}|^2}{2\gamma}&\leq
\frac{1}{2\tau}\Big(|p^{k+1}-\hat{y}|^2-|p^{k+1}-y^{k+1}|^2\Big)
+\frac{|\sigma^{k+1}-\hat{\sigma}|^2}{2\gamma}\nonumber\\
&\leq
\frac{1}{2\tau}\Big(|y^{k}-\hat{y}-\tau\Xi^{*}(\sigma^{k+1}-\hat{\sigma})|^2
-|y^{k}-p^{k+1}-\tau\Xi^{*}(\sigma^{k+1}-\hat{\sigma})|^2\nonumber\\
&\hspace{7.5cm}-|p^{k+1}-y^{k+1}|^2\Big)\nonumber\\
&\hspace{.5cm}+\frac{1}{2\gamma}\Big(|\sigma^{k}-\hat{\sigma}+\gamma\Xi(\bar{y}^k-\hat{y})|^2
-|\sigma^{k}-\sigma^{k+1}+\gamma\Xi(\bar{y}^k-\hat{y})|^2\nonumber\\
&=
\frac{1}{2\tau}\Big(|y^{k}-\hat{y}|^2-2\tau\scal{p^{k+1}-\hat{y}}{\Xi^{*}(\sigma^{k+1}-\hat{\sigma})}
-|y^{k}-p^{k+1}|^2\nonumber\\
&\hspace{7.5cm}-|p^{k+1}-y^{k+1}|^2\Big)\nonumber\\
&\hspace{.5cm}+\frac{1}{2\gamma}\Big(|\sigma^{k}-\hat{\sigma}|^2
+2\gamma\scal{\sigma^{k+1}-\hat{\sigma}}{\Xi(\bar{y}^k-\hat{y})}
-|\sigma^{k}-\sigma^{k+1}|^2\nonumber\\
&=
\frac{|y^{k}-\hat{y}|^2}{2\tau}+\frac{|\sigma^{k}-\hat{\sigma}|^2}{2\gamma}
-\frac{|y^{k}-p^{k+1}|^2}{2\tau}-\frac{|\sigma^{k}-\sigma^{k+1}|^2}{2\gamma}\nonumber\\
&\hspace{.5cm}
-\scal{\Xi(p^{k+1}-\hat{y})}{\sigma^{k+1}-\hat{\sigma}}
+\scal{\sigma^{k+1}-\hat{\sigma}}{\Xi(\bar{y}^k-\hat{y})}\nonumber\\
&\hspace{.5cm}
-\frac{|p^{k+1}-y^{k+1}|^2}{2\tau}\nonumber\\
&=
\frac{|y^{k}-\hat{y}|^2}{2\tau}+\frac{|\sigma^{k}-\hat{\sigma}|^2}{2\gamma}
-\frac{|y^{k}-p^{k+1}|^2}{2\tau}-\frac{|\sigma^{k}-\sigma^{k+1}|^2}{2\gamma}\nonumber\\
&\hspace{.5cm}
+\scal{\sigma^{k+1}-\hat{\sigma}}{\Xi(\bar{y}^k-{p}^{k+1})}-\frac{|p^{k+1}-y^{k+1}|^2}{2\tau}.
\end{align}
Since we are assuming, for simplicity, $\theta=1$, we have 
$\bar{y}^k=y^k+p^k-y^{k-1}$, which yields
\begin{align}
\scal{\sigma^{k+1}-\hat{\sigma}}{\Xi(\bar{y}^k-{p}^{k+1})}&=
\scal{\sigma^{k+1}-\hat{\sigma}}{\Xi(y^k-{p}^{k+1})}+
\scal{\sigma^{k+1}-\hat{\sigma}}{\Xi(p^k-{y}^{k-1})}\nonumber\\
&=-\scal{\sigma^{k+1}-\hat{\sigma}}{\Xi({p}^{k+1}-y^k)}+
\scal{\sigma^{k}-\hat{\sigma}}{\Xi(p^k-{y}^{k-1})}\nonumber\\
&\hspace{4.5cm}+\scal{\sigma^{k+1}-\sigma^{k}}{\Xi(p^k-{y}^{k-1})}\nonumber\\
&\leq-\scal{\sigma^{k+1}-\hat{\sigma}}{\Xi({p}^{k+1}-y^k)}+
\scal{\sigma^{k}-\hat{\sigma}}{\Xi(p^k-{y}^{k-1})}\nonumber\\
&\hspace{4.5cm}+\|\Xi\|\,|\sigma^{k+1}-\sigma^{k}|\,|p^k-{y}^{k-1}|\nonumber\\
&\leq-\scal{\sigma^{k+1}-\hat{\sigma}}{\Xi({p}^{k+1}-y^k)}+
\scal{\sigma^{k}-\hat{\sigma}}{\Xi(p^k-{y}^{k-1})}\nonumber\\
&\hspace{2.5cm}+\sqrt{\gamma\tau}\|\Xi\|\frac{|\sigma^{k+1}-\sigma^{k}|^2}{2\gamma}
+\sqrt{\gamma\tau}\|\Xi\|\frac{|p^k-{y}^{k-1}|^2}{2\tau}.
\end{align}
Therefore, from \eqref{e:auxili} we obtain
\begin{align}
\label{casifejer}
\frac{|y^{k+1}-\hat{y}|^2}{2\tau}+
\frac{|\sigma^{k+1}-\hat{\sigma}|^2}{2\gamma}&\leq
\frac{|y^{k}-\hat{y}|^2}{2\tau}+\frac{|\sigma^{k}-\hat{\sigma}|^2}{2\gamma}
-\frac{|y^{k}-p^{k+1}|^2}{2\tau}
+\frac{|{y}^{k-1}-p^k|^2}{2\tau}\nonumber\\
&\hspace{.5cm}
-(1-\sqrt{\gamma\tau}\|\Xi\|)\left(\frac{|{y}^{k-1}-p^k|^2}{2\tau}+
\frac{|\sigma^{k+1}-\sigma^{k}|^2}{2\gamma}\right)
\nonumber\\
&\hspace{.5cm}
-\frac{|p^{k+1}-y^{k+1}|^2}{2\tau}\nonumber\\
&\hspace{.5cm}
-\scal{\sigma^{k+1}-\hat{\sigma}}{\Xi({p}^{k+1}-y^k)}+
\scal{\sigma^{k}-\hat{\sigma}}{\Xi(p^k-{y}^{k-1})}
\end{align}
By calling
\begin{align}
\label{e:ak}
a^k&=\frac{|\sigma^{k+1}-\hat{\sigma}|^2}{2\gamma}+
\scal{\sigma^{k+1}-\hat{\sigma}}{\Xi(y^k-{p}^{k+1})}
+\frac{|y^{k}-p^{k+1}|^2}{2\tau}\nonumber\\
&\geq \frac{|\sigma^{k+1}-\hat{\sigma}|^2}{2\gamma}+
\scal{\sigma^{k+1}-\hat{\sigma}}{\Xi(y^k-{p}^{k+1})}
+\frac{\gamma\|\Xi\|^2|y^{k}-p^{k+1}|^2}{2}\nonumber\\
&\geq \frac{1}{2\gamma}\left(|\sigma^{k+1}-\hat{\sigma}|^2
+2\scal{\sigma^{k+1}-\hat{\sigma}}{\gamma\Xi(y^k-{p}^{k+1})}+
|\gamma\Xi(y^{k}-p^{k+1})|^2\right)\nonumber\\
&=\frac{1}{2\gamma}|\sigma^{k+1}-\hat{\sigma}+\gamma\Xi(y^k-{p}^{k+1})|^2\geq0,
\end{align}
it follows from \eqref{casifejer} that
\begin{align}
\frac{|y^{k+1}-\hat{y}|^2}{2\tau}+a^{k+1}&\leq 
\frac{|y^{k}-\hat{y}|^2}{2\tau}+a^k-(1-\sqrt{\gamma\tau}\|\Xi\|)\left(\frac{|{y}^{k-1}-p^k|^2}{2\tau}+
\frac{|\sigma^{k+1}-\sigma^{k}|^2}{2\gamma}\right)
\nonumber\\
&\hspace{.5cm}
-\frac{|p^{k+1}-y^{k+1}|^2}{2\tau}
\end{align}
and, hence, $(\frac{|y^{k}-\hat{y}|^2}{2\tau}+a^k)_{k\in\NN}$ 
is a  
Fej\'er sequence and, from \cite[Lemma~3.1(iii)]{CombettesQF} we 
have
\begin{equation}
\label{e:tozero}
y^{k-1}-p^k\to 0,\:\sigma^{k+1}-\sigma^k\to 
0,\:p^{k}-y^k\to0
\end{equation}
and  there exists $\alpha\geq0$ such that
$\frac{|y^{k}-\hat{y}|^2}{2\tau}+a^k\to\alpha$. It follows 
from 
\eqref{e:ak} that
\begin{equation}
\left|a^k-\frac{|\sigma^{k+1}-\hat{\sigma}|^2}{2\gamma}\right|\leq
\|\Xi\|\,|\sigma^{k+1}-\hat{\sigma}|\,|y^k-{p}^{k+1}|
+\frac{|y^{k}-p^{k+1}|^2}{2\tau}\to0,
\end{equation}
which yields
\begin{equation}
\label{e:alpha}
\xi^k(\hat{y},\hat{\sigma}):=\frac{|y^{k}-\hat{y}|^2}{2\tau}+\frac{|\sigma^{k+1}-\hat{\sigma}|^2}{2\gamma}
=\frac{|y^{k}-\hat{y}|^2}{2\tau}+a^k+\frac{|\sigma^{k+1}-\hat{\sigma}|^2}{2\gamma}-a^k\to\alpha.
\end{equation}
Hence, we have from \eqref{e:ak} that
$(y^k)_{k\in\NN}$ and $(\sigma^k)_{k\in\NN}$ are bounded.
Let $\bar{y}$ and $\bar{\sigma}$ be accumulation points of the 
sequences $(y^k)_{k\in\NN}$ and $(\sigma^k)_{k\in\NN}$, 
respectively, 
say $y^{k_n}\to \bar{y}$ and $\sigma^{k_n}\to\bar{\sigma}$. It 
follows from \eqref{e:tozero} that 
$\sigma^{k_n+1}\to\bar{\sigma}$,
$p^{k_n}\to \bar{y}$, $p^{k_n+1}\to \bar{y}$, $y^{k_n-1}\to 
\bar{y}$ 
and
$\bar{y}^{k_n}=y^{k_n}+p^{k_n}-y^{k_n-1}\to \bar{y}$. Hence, 
since 
$\prox_{\gamma\psi*}$, $\prox_{\tau\varphi}$, and $P_C$ are 
continuous,
by passing through the limit in \eqref{e:cpockproj}, 
we 
obtain $\bar{y}\in C$ and
\begin{equation}
\begin{cases}
\prox_{\tau\varphi}(\bar{y}-\tau\Xi^{*}\bar{\sigma})=\bar{y}\\
\prox_{\gamma\psi^*}(\bar{\sigma}+\gamma\Xi\bar{y})=\bar{\sigma},
\end{cases}
\end{equation} 
and, from \eqref{e:caract}, $(\bar{y},\bar{\sigma})$ is a 
primal-dual 
solution to \eqref{e:primal}. It is enough to prove that there 
is 
only one accumulation point. By contradiction, suppose that 
$(\bar{y}_1,\bar{\sigma}_1)$ and $(\bar{y}_2,\bar{\sigma}_2)$ 
are two 
accumulation points, say 
$(y^{k_n},\sigma^{k_n})\to(\bar{y}_1,\bar{\sigma}_1)$ and 
$(y^{k_m},\sigma^{k_m})\to(\bar{y}_2,\bar{\sigma}_2)$.
Since any accumulation point is a solution, we deduce 
from \eqref{e:alpha} that there exist $\alpha_1\geq0$ and 
$\alpha_2\geq0$ 
such that $\xi^{k}(\bar{y}_1,\bar{\sigma}_1)\to\alpha_1$ and 
$\xi^{k}(\bar{y}_2,\bar{\sigma}_2)\to\alpha_2$.
Now, for every $k\in\NN$,
\begin{align}
\xi^{k}(\bar{y}_1,\bar{\sigma}_1)&=
\frac{|y^{k}-\bar{y}_1|^2}{2\tau}
+\frac{|\sigma^{k+1}-\bar{\sigma}_1|^2}{2\gamma}\nonumber\\
&=
\xi^{k}(\bar{y}_2,\bar{\sigma}_2)
+\frac{1}{\tau}\scal{y^{k}-\bar{y}_2}{\bar{y}_2-\bar{y}_1}\nonumber\\
&\hspace{1cm}+\frac{1}{\gamma}\scal{\sigma^{k}-\bar{\sigma}_2}{\bar{\sigma}_2
-\bar{\sigma}_1}
+\frac{|\bar{y}_1-\bar{y}_2|^2}{2\tau}
+\frac{|\bar{\sigma}_1-\bar{\sigma}_2|^2}{2\gamma}
\end{align}
Then, we have
\begin{align}
\frac{1}{\tau}\scal{y^{k}}{\bar{y}_2-\bar{y}_1}
+\frac{1}{\gamma}\scal{\sigma^{k}}{\bar{\sigma}_2
-\bar{\sigma}_1}&=\xi^{k}(\bar{y}_1,\bar{\sigma}_1)+
\frac{1}{\tau}\scal{\bar{y}_2}{\bar{y}_2-\bar{y}_1}
+\frac{1}{\gamma}\scal{\bar{\sigma}_2}{\bar{\sigma}_2
-\bar{\sigma}_1}\nonumber\\
&-\frac{|\bar{y}_1-\bar{y}_2|^2}{2\tau}
-\frac{|\bar{\sigma}_1-\bar{\sigma}_2|^2}{2\gamma}-
\xi^{k}(\bar{y}_2,\bar{\sigma}_2)\to\ell,
\end{align}
where 
$\ell:=\alpha_1-\alpha_2+\frac{1}{\tau}\scal{\bar{y}_2}{\bar{y}_2-\bar{y}_1}
+\frac{1}{\gamma}\scal{\bar{\sigma}_2}{\bar{\sigma}_2
-\bar{\sigma}_1}
-\frac{|\bar{y}_1-\bar{y}_2|^2}{2\tau}
-\frac{|\bar{\sigma}_1-\bar{\sigma}_2|^2}{2\gamma}$.
Finally, by taking in particular the subsequences 
$(k_n)_{n\in\NN}$ and 
$(k_m)_{m\in\NN}$ we obtain
\begin{align}
\frac{1}{\tau}\scal{\bar{y}_1}{\bar{y}_2-\bar{y}_1}
+\frac{1}{\gamma}\scal{\bar{\sigma}_1}{\bar{\sigma}_2
-\bar{\sigma}_1}&=
\lim_{n\in\NN}\frac{1}{\tau}\scal{y^{k_n}}{\bar{y}_2-\bar{y}_1}
+\frac{1}{\gamma}\scal{\sigma^{k_n}}{\bar{\sigma}_2
-\bar{\sigma}_1}\nonumber\\
&=\ell\nonumber\\
&=\lim_{m\in\NN}\frac{1}{\tau}\scal{y^{k_m}}{\bar{y}_2-\bar{y}_1}
+\frac{1}{\gamma}\scal{\sigma^{k_m}}{\bar{\sigma}_2
-\bar{\sigma}_1}\nonumber\\
&=\frac{1}{\tau}\scal{\bar{y}_2}{\bar{y}_2-\bar{y}_1}
+\frac{1}{\gamma}\scal{\bar{\sigma}_2}{\bar{\sigma}_2
-\bar{\sigma}_1},
\end{align}
which yields
$|\bar{y}_2-\bar{y}_1|^2/\tau+|\bar{\sigma}_2
-\bar{\sigma}_1|^2/\gamma=0$ and the result follows.
\end{proof}

\end{document}